\newcommand{\Z}{{\mathbb{Z}}}
\newcommand{\C}{{\mathbb{C}}}
\newcommand{\R}{{\mathbb{R}}}
\newcommand{\N}{{\mathbb{N}}}
\theoremstyle{plain}
\newtheorem{theorem}{Theorem}
\newtheorem{proposition}[theorem]{Proposition}
\newtheorem{lemma}[theorem]{Lemma}
\newtheorem{corollary}[theorem]{Corollary}
\newtheorem{conjecture}[theorem]{Conjecture}
\theoremstyle{definition}
\newtheorem{remark}[theorem]{Remark}
\numberwithin{equation}{section}
\numberwithin{theorem}{section}
\numberwithin{equation}{section}
\newcommand{\KKK}{\mathcal{K}}
\newcommand{\LLL}{\mathcal{L}}
\begin{document}

  \onehalfspacing

\title[NLS with the combined terms]
{The dynamics of the 3D radial NLS with the combined terms}

\author[Miao]{Changxing Miao}
\address{\hskip-1.15em Changxing Miao:
\hfill\newline Institute of Applied Physics and Computational
Mathematics, \hfill\newline P. O. Box 8009,\ Beijing,\ China,\
100088,}
\email{miao\_changxing@iapcm.ac.cn}

\author[Xu]{Guixiang Xu}
\address{\hskip-1.15em Guixiang Xu \hfill\newline Institute of
Applied Physics and Computational Mathematics, \hfill\newline P. O.
Box 8009,\ Beijing,\ China,\ 100088, }
\email{xu\_guixiang@iapcm.ac.cn}

\author[Zhao]{Lifeng Zhao}
\address{\hskip-1.15em Lifeng Zhao \hfill\newline
University of Science and Technology of China, \hfill\newline
Hefei,\ China, } \email{zhaolifengustc@yahoo.cn}

\subjclass[2000]{Primary: 35L70, Secondary: 35Q55}

\keywords{Blow up; Dynamics; Nonlinear Schr\"{o}dinger Equation;
Scattering; Threshold Energy.}

\begin{abstract}In this paper, we show the scattering and blow-up result of the radial solution with the energy
below the threshold for the nonlinear Schr\"{o}dinger equation (NLS) with the combined terms
\begin{align*} iu_t +  \Delta u  =  -|u|^4u + |u|^2u \tag{CNLS}
\end{align*} in the energy space $H^1(\R^3)$. The threshold is given by the
ground state $W$ for the energy-critical NLS: $iu_t +  \Delta u  =
-|u|^4u$. This problem was proposed by Tao, Visan and Zhang in
\cite{TaoVZ:NLS:combined}. The main difficulty is the lack of the
scaling invariance. Illuminated by \cite{IbrMN:f:NLKG}, we need give the new radial profile decomposition with
the scaling parameter, then apply
it into the scattering theory. Our result shows that the defocusing,
$\dot H^1$-subcritical perturbation $|u|^2u$ does not affect the determination of the threshold
of the scattering solution of (CNLS) in the energy space.
\end{abstract}

\maketitle


%
%
%
%

\section{Introduction}

We consider the dynamics of the radial  solutions for the
nonlinear Schr\"{o}dinger equation (NLS) with the combined
nonlinearities in $H^1(\R^3)$
\begin{equation} \label{NLS}
\left\{ \aligned
    iu_t +  \Delta u  = &\; f_1(u) + f_2(u),  , \quad (t,x)\in \R \times \R^3 , \\
     u(0)= & \; u_0(x)\in H^1(\R^3).
\endaligned
\right.
\end{equation}
where $u:\R \times \R^3 \mapsto \C$ and $f_1(u)=-|u|^4u$, $f_2(u)=
|u|^2u$. As we known, $f_1$ has the $\dot H^1$-critical growth, $f_2$ has the $\dot H^1$-subcritical growth.

The equation has the following mass and Hamiltonian quantities
\begin{align*}
M(u)(t)= & \frac12 \int_{\R^3} |u(t,x)|^2 \; dx; \quad  E(u)(t)=
\int_{\R^3} \frac12 |\nabla u(t,x)|^2 \; dx  + F_1(u(t)) + F_2(u(t))
\end{align*}
where $  F_1(u(t)) = \displaystyle  -\frac16 \int_{\R^3}
|u(t,x)|^6\; dx,\;\;  F_2(u(t))= \frac14  \int_{\R^3} |u(t,x)|^4 \;
dx.$ They are conserved for the sufficient smooth solutions of
\eqref{NLS}.

In \cite{TaoVZ:NLS:combined}, Tao, Visan and Zhang made the
comprehensive study of
\begin{align*}
iu_t +  \Delta u  =  |u|^4u + |u|^2u
\end{align*}
in the energy space. They made use of the interaction Morawetz
estimate established in \cite{CKSTT04} and the stability theory for the scattering solution. Their
result is based on the scattering result of the defocusing,
energy-critical NLS in the energy space, which is established by
Bourgain \cite{Bou:NLS:99, Bou:NLS:book} for the radial case, I-team
\cite{CKSTT07}, Ryckman-Visan \cite{RyV05} and Visan \cite{Vi05} for
the general data. Since the classical interaction Morawetz estimate in \cite{CKSTT04} fails
for \eqref{NLS}, Tao, et al., leave the scattering and blow-up
dichotomy of \eqref{NLS} below the threshold as an open problem in
\cite{TaoVZ:NLS:combined}. For other results, please refer to \cite{GinV:85:NLS, GinV:85:NLKG subcritical,
Nak:NLKG low dim:subcrit, Nak:01:NLKG subcritical, NakSch:cubic NLS:Rigidity,
Zha:global:NLKG, Zhang:NLS:06}.

For the focusing, energy-critical NLS
\begin{align}\label{NLS:focusing critical}
iu_t +  \Delta u = -|u|^4u.
\end{align}
Kenig and Merle first applied the concentration
compactness in \cite{BahG:NLW:proffile decomp, Ker:NLS:profile decomp, Ker:NLS:compactness}
into the scattering theory of the radial solution of
\eqref{NLS:focusing critical} in \cite{KenM:NLS:GWP} with the energy below that of the
ground state of
\begin{align}\label{ellip-critical}
-\Delta W = |W|^4W.
\end{align}
In this paper, we will also make use of the concentration compactness argument
and the stability theory to study the dichotomy of the radial solution of \eqref{NLS} with
the energy below the threshold, which will be shown to be the energy of the ground
state $W$ for \eqref{NLS:focusing critical}. For the applications of the concentration compactness in the
scattering theory and rigidity theory of the critical NLS, NLW, NLKG and Hartree
equations, please see \cite{Dod:NLS:higher dim, Dod:NLS:two dim,
Dod:NLS:one dim, Dod:NLS:foc,  DuyM:NLS:ThresholdSolution, DuyR:NLS:ThresholdSolution, IbrMN:f:NLKG, KenM:NLW:GWP,
KiTV:NLS:2d, KiV:en-NLS:high dim, KiVZ:NLS:high dim, LiZh:NLS, MiaoWX:Har:dynamic, MiXZ07b, MiXZ:NLS:radial NLS:mass
critical sca}.

We now show the differences between \eqref{NLS} and \eqref{NLS:focusing critical}. On one hand,
there is an explicit solution $W$ for \eqref{NLS:focusing critical}, which is the ground state of \eqref{ellip-critical}
and does not scatter. The threshold of the scattering solution of \eqref{NLS:focusing critical} is determined by the energy of
$W$. While for \eqref{NLS}, there is no such
explicit solution, whose energy is the threshold of the scattering solution of \eqref{NLS}. We need look for a mechanism to determine
the threshold of the scattering solution of \eqref{NLS}. It turns out that the constrained
minimization of the energy as \eqref{minimization} is appropriate\footnote{The similar constrained
minimization of the energy as \eqref{minimization} is not appropriate for the focusing perturbation: $iu_t +  \Delta u  =  -|u|^4u - |u|^2u$,
since the threshold $m$ in this way equals to $0$ and it is not the desired result.}.  On the other hand, for \eqref{NLS:focusing critical}, it is $\dot H^1$-scaling invariant,
which gives us many conveniences, especially in the nonlinear
profile decomposition about \eqref{NLS:focusing critical}. While for
\eqref{NLS}, it is the lack of scaling invariance. We need give the
new profile decomposition with the scaling parameter of
\eqref{NLS} in $H^1(R^3)$, take care of the role of the scaling
parameter in the linear and nonlinear profile decompositions, then apply them into
the scattering theory.

Now for $\varphi\in H^1$, we denote the scaling quantity
$\varphi^{\lambda}_{3, -2}$ by
\begin{align*}
\varphi^{\lambda}_{3, -2} (x)= e^{3\lambda}\varphi(e^{2\lambda}x).
\end{align*}
We denote the scaling derivative of $E$ by $K(\varphi)$
\begin{align}\label{scaling deriv:special}
K(\varphi)=  \LLL E(\varphi)
:=  \dfrac{d}{d \lambda } \Big|_{\lambda =0 } E(
\varphi^{\lambda}_{3, -2})  =  \int_{\R^3} \left( \frac{4}{2} |\nabla
\varphi|^2 - \frac{12}{6}|\varphi|^6 +\frac64 |\varphi|^4 \right)\; dx,
\end{align}
which is connected with the Virial identity, and then plays the
important role in the blow-up and scattering of the solution of
\eqref{NLS}.

Now the threshold $m$ is determined by the following constrained
minimization\footnote{In fact, the following minimization of the static energy
$$ \inf \{ M(\varphi)+ E(\varphi)\; |\;  \varphi \in H^1(\R^3),\; \varphi\not=0, \;
K(\varphi)=0 \}$$
 also equals to $m$.
} of the energy $E(\varphi)$
\begin{align}\label{minimization}
m = \inf \{ E(\varphi)\; |\;  \varphi \in H^1(\R^3),\; \varphi\not=0, \;
K(\varphi)=0 \}.
\end{align}
Since we consider the $\dot H^1$-critical growth with the $\dot H^1$-subcritical perturbation, we will use the modified energy later
\begin{align*}
E^c(u)= & \int_{\R^3} \left(\frac12 |\nabla u(t,x)|^2 -\frac16
|u(t,x)|^6 \right) \; dx.
\end{align*}

As the nonlinearity $|u|^2u$ is the defocusing, $\dot H^1$-subcritical perturbation, one think that the focusing, $\dot H^1$-critical term
plays the decisive role of the threshold of the scattering solution of \eqref{NLS} in the energy space.
The first result is to characterize the threshold energy $m$ as following

\begin{proposition}\label{threshold-energy} There is no minimizer for \eqref{minimization}. But for the threshold energy
$m$, we have
\begin{align*}
m = E^c(W),
\end{align*}
where $W\in \dot H^1(\R^3)$ is the ground state of the massless equation
\begin{align*}
-\Delta W = |W|^4W.
\end{align*}
\end{proposition}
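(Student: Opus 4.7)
The plan is to establish the two inequalities $m \ge E^c(W)$ and $m \le E^c(W)$ separately; the rigidity case of the first will immediately rule out minimizers.

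For the lower bound, fix $\varphi \in H^1(\R^3)\setminus\{0\}$ with $K(\varphi)=0$. Rewriting the constraint as $\|\varphi\|_6^6 = \|\nabla \varphi\|_2^2 + \tfrac34 \|\varphi\|_4^4$ and substituting into $E$ yields the clean identity
\[
E(\varphi) \;=\; \tfrac13\|\nabla\varphi\|_2^2 + \tfrac18\|\varphi\|_4^4.
\]
Since $\|\varphi\|_4^4 \ge 0$, the constraint also forces $\|\varphi\|_6^6 \ge \|\nabla\varphi\|_2^2$. Combined with the sharp Sobolev inequality $\|\varphi\|_6^6 \le \|\nabla W\|_2^{-4}\|\nabla\varphi\|_2^6$ (whose extremizer is $W$), this yields $\|\nabla\varphi\|_2 \ge \|\nabla W\|_2$, so $E(\varphi) \ge \tfrac13\|\nabla W\|_2^2 = E^c(W)$. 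The non-existence of a minimizer is then immediate: any $\varphi$ achieving $E(\varphi)=m=E^c(W)$ would force $\|\varphi\|_4=0$ in the identity above, hence $\varphi=0$, contradicting $\varphi\ne0$.

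For the upper bound, I construct a sequence $\varphi_n$ with $K(\varphi_n)=0$ and $E(\varphi_n)\to E^c(W)$ by cutting off and critically rescaling $W$. Fix $\chi\in C_c^\infty(\R^3)$ with $\chi\equiv 1$ near the origin, and set $W_n(x)=\chi(x)\,n^{1/2}W(nx)$. Using the explicit decay $W(y)\sim|y|^{-1}$, a change of variables $y=nx$ gives $W_n\in H^1(\R^3)$ with
\[
\|\nabla W_n\|_2^2 \to \|\nabla W\|_2^2, \qquad \|W_n\|_6^6 \to \|W\|_6^6, \qquad \|W_n\|_4^4 = O(n^{-1}),
\]
so $K(W_n)\to 0$ and $E(W_n)\to E^c(W)$. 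To land exactly on the constraint surface, apply the $(3,-2)$-scaling $W_n^\lambda:=(W_n)^\lambda_{3,-2}$: since $K(W_n^\lambda)=\partial_\lambda E(W_n^\lambda)$ and $\partial_\lambda K(W_n^\lambda)|_{\lambda=0}\to-16\|\nabla W\|_2^2<0$, the implicit function theorem (or a direct intermediate-value argument) supplies $\lambda_n\to 0$ with $K(W_n^{\lambda_n})=0$. Integrating $K=\partial_\lambda E$ across $[0,\lambda_n]$ then gives $E(W_n^{\lambda_n})=E(W_n)+O(\lambda_n^2)\to E^c(W)$, completing the proof.

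The main technical point lies in the upper bound: because $W\notin L^2(\R^3)$, the cut-off generates tail contributions to $\|\nabla W_n\|_2^2$ and $\|W_n\|_6^6$ that must be estimated carefully using the explicit polynomial decay of $W$ and its gradient (these turn out to be $O(n^{-1})$ and $O(n^{-3})$ respectively). Once these asymptotics are in hand, the selection of $\lambda_n$ and the passage to the limit reduce to routine perturbative manipulations.
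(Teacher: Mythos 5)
Your proposal is correct, but it follows a genuinely different route from the paper. The paper never works directly on the constraint surface $\{K=0\}$: it first replaces $E$ by the positive functional $H=E-\tfrac16K=\tfrac16\|\nabla\varphi\|_{L^2}^2+\tfrac16\|\varphi\|_{L^6}^6$ and enlarges the constraint from $K(\varphi)=0$ to $K(\varphi)\le 0$ and then, by two scaling limits, to $K^c(\varphi)\le 0$ (Lemmas \ref{minimization:H} and \ref{minimization:Hc}); the resulting infimum is then identified with $\tfrac13(C_3^*)^{-3}=E^c(W)$ through the sharp Sobolev constant (Lemma \ref{threshold}), so both inequalities come out of monotonicity-under-scaling rather than from explicit test functions. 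You instead get the lower bound by substituting the constraint into $E$ to obtain $E(\varphi)=\tfrac13\|\nabla\varphi\|_{L^2}^2+\tfrac18\|\varphi\|_{L^4}^4$ on $\{K=0\}$, and the upper bound from the concentrating family $\chi(x)\,n^{1/2}W(nx)$ pushed back onto the constraint surface by the $(3,-2)$-scaling — essentially the same cut-off/rescaling computation the paper performs in Remark \ref{rem:noempty} for a different purpose (nonemptiness of $\KKK^{\pm}$). Your asymptotics ($O(n^{-1})$ for the $L^4$ and gradient tails, $O(n^{-3})$ for the $L^6$ tail, $\partial_\lambda K\to-16\|\nabla W\|_{L^2}^2$) are correct, and the intermediate-value selection of $\lambda_n$ works as stated; these details do need to be written out, but they are routine. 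Two comparative remarks: your identity $E=\tfrac13\|\nabla\varphi\|_{L^2}^2+\tfrac18\|\varphi\|_{L^4}^4$ makes the non-existence of a minimizer completely transparent (the paper asserts it in the Proposition but never isolates an argument for it, leaving it implicit in the equality discussion of Lemma \ref{threshold}), which is a genuine gain in clarity; on the other hand, the paper's detour through $H$ is not wasted effort, since the characterization $m=\inf\{H(\varphi):\varphi\ne0,\ K(\varphi)\le 0\}$ is reused crucially later (in the blow-up argument of Section \ref{S:blow up}, where $m\le H(u(t))$ when $K(u(t))<0$, and in Lemma \ref{L:close of K}), so if you adopted your proof you would still need Lemma \ref{minimization:H} separately.
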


As the dynamics of the solution of \eqref{NLS} with the energy less than the threshold $m$, the conjecture is
\begin{conjecture}\label{conjecture}
Let $u_0\in H^1(\R^3) $ with
\begin{equation}
E(u_0) <  m,
\end{equation}
and $u$ be the solution of \eqref{NLS} and $I$ be its maximal
interval of existence. Then
\begin{enumerate}
\item[\rm (a)] If $K(u_0)\geq 0$, then $I=\R$, and $u$ scatters in both time directions as $t\rightarrow \pm \infty$ in
$H^1$;

\item[\rm (b)] If $K(u_0)<0$, then $u$ blows up both
forward and backward at finite time in $H^1$.
\end{enumerate}
\end{conjecture}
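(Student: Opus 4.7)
The plan is to follow the Kenig--Merle concentration-compactness / rigidity road-map of \cite{KenM:NLS:GWP}, adapted to the absence of scaling invariance in the spirit of \cite{IbrMN:f:NLKG}. The first step is variational: using Proposition~\ref{threshold-energy} together with a deformation argument on $\{K=0\}$, I would show that the sub-threshold sets
$$
\mathcal{K}^{\pm} = \bigl\{\varphi\in H^1(\R^3):\ E(\varphi)<m,\ \pm K(\varphi)>0\bigr\}
$$
are each invariant under the flow of \eqref{NLS}. Indeed, since \eqref{minimization} admits no minimizer, $K$ cannot vanish on $\{E<m\}\setminus\{0\}$, so its sign is preserved by continuity; an analysis of $\lambda\mapsto E(\varphi^{\lambda}_{3,-2})$ combined with the gap $m - E(u_0)$ yields quantitative coercivity
$$
K(u(t)) \geq c_1\min\bigl(\|\nabla u(t)\|_{L^2}^2,\,1\bigr)\ \text{on}\ \mathcal{K}^{+},\qquad K(u(t)) \leq -\delta\ \text{on}\ \mathcal{K}^{-},
$$
with $c_1,\delta>0$ depending only on $m-E(u_0)$.

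Part (b) is then a Glassey-type localized virial argument. For a smooth radial cutoff $\chi_R$ behaving like $|x|^2$ on $|x|\leq R$ and saturating at $R^2$, the standard computation combined with the radial Sobolev embedding $|x|\,|u(t,x)|\lesssim \|u(t)\|_{H^1}$ gives
$$
\frac{d^2}{dt^2}\int_{\R^3}\chi_R(x)|u(t,x)|^2\,dx = 8K(u(t)) + O_R(1) \leq -4\delta
$$
for $R$ sufficiently large. Since the weighted mass remains non-negative, this forces blow-up in finite time.

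For part (a), the heart of the argument is a linear and nonlinear profile decomposition in $H^1(\R^3)$ carrying an additional scaling parameter $\lambda_n$, to be established as a separate proposition. Assuming scattering fails, I would take the infimum $E_c\leq m$ of $\mathcal{K}^{+}$ energies at which scattering fails; the profile decomposition together with the perturbation theory for \eqref{NLS} then extracts a critical element $u_c$ whose orbit, modulo the scaling $\lambda(t)$, is precompact in $H^1$. Rigidity splits into three regimes. If $\lambda(t)\to+\infty$ (concentration at scale zero), the subcritical term becomes negligible after rescaling and $u_c$ produces a nonscattering solution of the focusing energy-critical equation \eqref{NLS:focusing critical} with $E^c\leq m = E^c(W)$, contradicting \cite{KenM:NLS:GWP}. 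If $\lambda(t)\to-\infty$ (dispersion at large scales), the critical term disappears after rescaling and $u_c$ limits to a nontrivial global solution of the defocusing $\dot H^1$-subcritical NLS $iu_t+\Delta u=|u|^2u$ with precompact trajectory, ruled out by the classical Morawetz estimate. If $\lambda(t)$ stays bounded, $u_c$ itself is $H^1$-precompact, and the truncated virial applied to $u_c$, combined with the uniform lower bound $K(u_c(t))\geq \delta>0$, contradicts conservation of mass as in \cite{KenM:NLS:GWP}.

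The principal obstacle is the need to track the scaling parameter through every stage. Because the quintic term is $\dot H^1$-invariant while the cubic term is not, the limiting equation satisfied by each nonlinear profile depends nontrivially on the behavior of its scale $\lambda_n$: the long-time perturbation lemma for \eqref{NLS} must tolerate scale-dependent subcritical errors, and one has to identify, for each regime of $\lambda_n$, the reduced equation that governs the profile (focusing energy-critical NLS, defocusing cubic NLS, or \eqref{NLS} itself). Once this refined profile decomposition and its associated perturbation lemma are in place, the variational trapping, the localized virial, and the rigidity case analysis all proceed along well-established lines.
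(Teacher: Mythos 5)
Your overall roadmap is the one the paper follows (variational trapping via Lemma \ref{uniform bound}, a localized virial for $\mathcal{K}^{-}$, a profile decomposition carrying a scale, extraction of a critical element, and a truncated-virial rigidity step closed by Kenig--Merle \cite{KenM:NLS:GWP}), but the rigidity case analysis contains a step that would fail. In your regime ``$\lambda(t)\to-\infty$'' you claim that after rescaling the quintic term disappears and $u_c$ limits to a solution of the defocusing cubic NLS. Under the energy-critical normalization $U(t,x)=R^{1/2}u(R^{2}t,Rx)$ (the one compatible with keeping the quintic term and the $\dot H^1$ norm), a solution living at spatial scale $R$ is transformed into a solution of $iU_t+\Delta U=-|U|^{4}U+R\,|U|^{2}U$, so as the scale $R\to\infty$ the cubic coefficient \emph{diverges} rather than vanishes; the only normalization under which the quintic drops and the cubic survives is the $\dot H^{1/2}$-invariant one $U=Ru(R^{2}t,Rx)$, and then a nontrivial profile at scales $R_n\to\infty$ carries mass of order $R_n\to\infty$, incompatible with the $H^1$ bound and mass conservation. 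This is precisely why in the paper's Proposition \ref{L:linear profile} the scales satisfy $h^j_n\in(0,1]$ and are either $\to 0$ or $\equiv 1$: for $H^1$-bounded radial sequences there is no large-scale regime at all. The paper then disposes of the concentrating case $h^0_n\to 0$ already at the extraction stage (Theorem \ref{APS:existence}), since that profile solves the pure quintic equation \eqref{NLS:focusing critical} with $E^c<m=E^c(W)$ and $K^c\geq 0$ and hence scatters by \cite{KenM:NLS:GWP}; consequently the critical element is precompact at a \emph{fixed} scale, with no modulation $\lambda(t)$, and the rigidity reduces to a single truncated-virial argument. As written, your proposal neither shows the large-scale branch is vacuous nor handles it correctly, so that branch of your rigidity argument is a genuine gap (the bounded-scale and concentrating branches coincide with the paper's treatment).

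Two further points need repair, though they are fixable by the paper's own devices. In Part (b) the error in the localized virial is not $O_R(1)$: the exterior $L^4$ and $L^6$ tails estimated by radial Sobolev/Gagliardo--Nirenberg carry factors of $\|\nabla u(t)\|_{L^2}$ (e.g.\ $\int_{|x|\geq R}|u|^{6}\,dx\lesssim R^{-4}\|u\|_{L^2}^{4}\|\nabla u\|_{L^2}^{2}$), so one obtains $\partial_t^2V_R\leq 4K(u(t))+\epsilon\|\nabla u(t)\|_{L^2}^2+C_R\epsilon^2$, and the bound $K(u(t))\leq-\delta$ alone does not dominate the $\epsilon\|\nabla u\|_{L^2}^{2}$ term; one must absorb it as the paper does, writing $4K(u)=48E(u)-16\|\nabla u\|_{L^2}^{2}-6\|u\|_{L^4}^{4}$ and using the lower bound $\|\nabla u(t)\|_{L^2}^{2}>3m$ (which follows from $K(u(t))<0$, Lemma \ref{minimization:H} and the sharp Sobolev constant), to get a uniform negative upper bound and hence finite-time blow-up. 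Finally, the sign-invariance of $K$ on $\{E<m\}$ has nothing to do with the nonexistence of a minimizer for \eqref{minimization}: it follows directly from the definition of $m$ (any nonzero $\varphi$ with $K(\varphi)=0$ has $E(\varphi)\geq m$) together with continuity of the flow, exactly as in the paper's continuity argument.
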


In this paper, we verify the conjecture in the radial case.
\begin{theorem}\label{theorem}
Conjecture \ref{conjecture} holds whenever $u$ is spherically symmetric.
\end{theorem}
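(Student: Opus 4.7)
My plan is to follow the Kenig--Merle concentration--compactness/rigidity blueprint from \cite{KenM:NLS:GWP}, but to run the profile decompositions with an additional scaling parameter $\lambda_n^j$, in the spirit of \cite{IbrMN:f:NLKG}, in order to compensate for the absence of scaling invariance in \eqref{NLS}. Before any dynamical argument, I would first develop the variational theory attached to the minimization \eqref{minimization}. Splitting $E$ along the curve $\lambda\mapsto E(\varphi^\lambda_{3,-2})$ and using Proposition \ref{threshold-energy}, I would show that the sublevel sets
\begin{equation*}
\KKK^{\pm}=\bigl\{\varphi\in H^1(\R^3)\setminus\{0\}\,:\,E(\varphi)<m,\ \pm K(\varphi)>0\bigr\}
\end{equation*}
are invariant under the flow of \eqref{NLS}, together with uniform coercivity bounds: $\|\nabla u(t)\|_{L^2}^2<(1-\eta)\|\nabla W\|_{L^2}^2$ and $K(u(t))\ge\delta>0$ on $\KKK^+$, and $K(u(t))\le-\delta<0$ on $\KKK^-$, for constants depending only on $m-E(u_0)$. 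Combined with the sharp Sobolev embedding, this gives global well-posedness in $H^1$ on $\KKK^+$.

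For (b), blow-up would follow from a localized virial identity. In the radial setting I would test against $a(|x|)=|x|^2\chi(|x|/R)$ and compute
\begin{equation*}
\tfrac{d^2}{dt^2}\int a(|x|)|u(t,x)|^2\,dx=8\,K(u(t))+\mathcal{R}_R(t),
\end{equation*}
where the remainder $\mathcal{R}_R$ is controlled by the radial Sobolev inequality $\||x|u\|_{L^\infty(|x|\ge R)}\lesssim\|u\|_{H^1}$. The uniform estimate $K(u(t))\le-\delta$ then produces a strictly negative second derivative for a non-negative functional, forcing blow-up in both time directions by the standard Glassey convexity argument.

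For (a), I would assume that scattering fails at some threshold $E_c<m$ and build, by the new radial linear profile decomposition with scaling parameter announced in the introduction together with the nonlinear profile decomposition and the long-time stability theory for \eqref{NLS}, a nonzero radial critical solution $u_c\in\KKK^+$ with $E(u_c)=E_c$ whose $H^1$-trajectory is precompact. The delicate point is the dichotomy on the scales $\lambda_n^j$: when $\lambda_n^j\to+\infty$ the profile concentrates, the $\dot H^1$-subcritical term $|u|^2u$ becomes asymptotically negligible, and the profile is driven by the focusing energy-critical NLS \eqref{NLS:focusing critical} strictly below the $W$-threshold, hence scatters by \cite{KenM:NLS:GWP}; when the scales stay bounded the profile gives a genuine \eqref{NLS}-solution of strictly smaller energy; the Pythagorean decomposition of $M$, $E$ and $K$ combined with the minimality of $E_c$ and the variational coercivity on $\KKK^+$ forces exactly one nonzero profile, which is the critical element.

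Finally, I would rule out the critical element by rigidity. Radial precompactness in $H^1$ yields the uniform tightness $\int_{|x|\ge R}\bigl(|\nabla u_c|^2+|u_c|^4+|u_c|^6\bigr)\,dx\to0$ as $R\to\infty$ uniformly in $t$, so the truncated virial functional $V_R(t)=\int a(|x|)|u_c(t,x)|^2\,dx$ stays bounded in $t$, while the coercivity $K(u_c(t))\ge\delta$ gives $V_R''(t)\ge 8\delta-o_R(1)$. Integrating twice on $[0,T]$ forces $V_R(T)\gtrsim T^2$, contradicting the boundedness unless $u_c\equiv 0$. The main obstacle I expect is precisely the scaling parameter in the profile decomposition: one must verify that in each scale regime the nonlinear flow of \eqref{NLS} is well approximated by the relevant limiting equation (focusing energy-critical NLS, or \eqref{NLS} itself), and that the usual orthogonality of the $L^2$, $\dot H^1$, $L^4$ and $L^6$ functionals needed for Pythagorean expansions survives the scaling $\varphi^\lambda_{3,-2}$, which preserves mass but not $\dot H^1$ or $L^4$; this is the price to pay for the lack of a single underlying symmetry in \eqref{NLS}.
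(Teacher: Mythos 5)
Your plan is essentially the paper's proof: the same variational setup around \eqref{minimization} and Lemma \ref{uniform bound}, a truncated virial argument for $\KKK^-$, the scaling-parameter linear/nonlinear profile decomposition in which concentrating profiles ($h^j_n\to 0$) are governed by the focusing energy-critical equation \eqref{NLS:focusing critical} and handled by the radial Kenig--Merle theorem, extraction of a single critical element with precompact trajectory, and its exclusion by the localized virial identity together with the exterior-energy smallness of Corollary \ref{compact:almost periodicity}. The only step stated too quickly is the blow-up part: the cutoff remainder is of size $\epsilon\|\nabla u(t)\|_{L^2}^2$, which cannot be beaten by $K(u(t))\le-\delta$ alone since the kinetic energy is unbounded near blow-up; as in Section \ref{S:blow up} you must rewrite $4K(u)=48E(u)-16\|\nabla u\|_{L^2}^2-6\|u\|_{L^4}^4$ and use that $K(u(t))<0$ forces $\|\nabla u(t)\|_{L^2}^2>3m$ via the sharp Sobolev constant, so the large kinetic term absorbs the error and yields $\partial_t^2 V_R\le -24\delta_1 m$.
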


\begin{remark}
Our consideration of the radial case is based on the following facts:
\begin{enumerate}
\item It is an open problem that the scattering result of \eqref{NLS:focusing critical}
in dimension three, except for the radial case in \cite{KenM:NLS:GWP}.
Our result is based on the corresponding scattering result of \eqref{NLS:focusing critical}.
\item It seems to be hard to lower the regularity of the critical element to $L^{\infty}\dot H^s$ for some $s<0$ by the double Duhamel argument
in dimension three
to obtain the compactness of the critical element in $L^2$, which is used to control the spatial center function $x(t)$ of the critical element.
\end{enumerate}
\end{remark}

\begin{remark}
We can remove the radial assumption under the stronger constraint that
\begin{align*}
M(u_0)+E(u_0) < m,
\end{align*}
which can help us to obtain the compactness of the critical element in $L^2$ and control the spatial center function $x(t)$ of the critical
element. Of course, we need the precondition\footnote{By the relation between the sharp Sobolev constant and the ground state $W$, we know that the constrained condition
\begin{align*}
\int_{\R^3} \left( \big|\nabla u_0 \big|^2
-\big|u_0\big|^6 \right) \; dx \geq  0,\quad \int_{\R^3} \left( \frac12\big|\nabla u_0 \big|^2 - \frac16
  \big| u_0 \big|^6 \right)\; dx <  E^c(W)
\end{align*}
is equivalent to the constrained condition
\begin{align*}
 \big\|\nabla u_0 \big\|^2_{L^2}\leq \big\|\nabla W\big\|^2_{L^2},\quad \int_{\R^3} \left( \frac12\big|\nabla u_0 \big|^2 - \frac16
  \big| u_0 \big|^6 \right)\; dx < E^c(W).
\end{align*}
We use the former in this paper while the latter is given by Kenig-Merle in \cite{KenM:NLS:GWP}.} that the global wellposedness and scattering result of \eqref{NLS:focusing critical} holds
for $u_0\in \dot H^1(\R^3)$  with
\begin{align*}
 \int_{\R^3} \left( \big|\nabla u_0 \big|^2
-\big|u_0\big|^6 \right) \; dx \geq  0, \quad
 \int_{\R^3} \left( \frac12\big|\nabla u_0 \big|^2 - \frac16
  \big| u_0 \big|^6 \right)\; dx < & m.
\end{align*}
\end{remark}

\begin{remark}\label{rem:noempty}
From the assumption in Theorem \ref{theorem}, we know that the solution starts from the following subsets of the energy space,
\begin{align*}
\KKK^{+}=& \Big\{ \varphi \in H^1(\R^3)\; \;\Big|\;\; \varphi\; \text{is radial},\; E(\varphi)<m,\; K(\varphi)\geq 0  \Big\},\\
\KKK^{-}=& \Big\{ \varphi \in H^1(\R^3)\; \;\Big|\;\; \varphi\; \text{is radial},\; E(\varphi)<m,\; K(\varphi)< 0  \Big\}.
\end{align*}
By the scaling argument, we know that $\KKK^{\pm} \not= \emptyset$ (we can also know that $\KKK^+ \not = \emptyset$ by the small data theory). In fact, let $\chi(x)$ be a radial smooth cut-off function satisfying
$0\leq \chi \leq 1$, $\chi (x)=1$ for $|x|\leq 1$ and $\chi (x)=0$ for $|x|\geq 2$. If we take $\chi_R(x)=\chi(x/R)$ and
\begin{align*}
\varphi(x)=\theta \lambda^{-1/2}\chi_R(x/\lambda)W(x/\lambda),
\end{align*}
where $\theta, \lambda, R$ is determined later and the cutoff function $\chi_R$ is not needed for dimension $d\geq 5$ since $W\in H^1$.
Then we have
\begin{align*}
\big\|\nabla\varphi\big\|^2_{L^2}= & \theta^2 \left( \big\|\nabla W\big\|^2_{L^2} + \int \left((\chi_R^2-1)\big|\nabla W\big|^2 + |\nabla \chi_R|^2 |W|^2 + 2\chi_R \nabla\chi_R \cdot W \nabla W \right)\; dx\right),\\
\big\| \varphi\big\|^6_{L^6}=& \theta^6 \left( \big\|W\big\|^6_{L^6} + \int (\chi_R^6 -1) |W|^6 \; dx\right),\quad
\big\| \varphi\big\|^4_{L^4}=  \lambda \cdot \theta^4 \big\|\chi_R W\big\|^4_{L^4},\\
\big\| \varphi\big\|^2_{L^2}=& \lambda^2 \cdot \theta^2 \big\|\chi_RW\big\|^2_{L^2}.
\end{align*}
Therefore, taking $R$ sufficiently large, $\theta=1+\epsilon$ and $\lambda=\epsilon^3$  , we have
\begin{align*}
E(\varphi)= & \frac{\theta^2}{2}\big\|\nabla W\big\|^2_{L^2} -\frac{\theta^6}{6}\big\|W\big\|^6_{L^6}\\
& + \frac{\theta^2}{2} \int \left((\chi_R^2-1)\big|\nabla W\big|^2 + |\nabla \chi_R|^2 |W|^2 + 2\chi_R \nabla\chi_R \cdot W \nabla W \right)\; dx \\
& - \frac{\theta^6}{6} \int (\chi_R^6 -1) |W|^6 \; dx  + \lambda\cdot \frac{\theta^4}{4} \big\|\chi_R W\big\|^4_{L^4}\\
=& m - 6 \epsilon^2 m + o(\epsilon^2), \\
K(\varphi)= & 2 \theta^2 \big\|\nabla W\big\|^2_{L^2} - 2\theta^6 \big\|W\big\|^6_{L^6}\\
 & + 2 \theta^2  \int \left((\chi_R^2-1)\big|\nabla W\big|^2 + |\nabla \chi_R|^2 |W|^2 + 2\chi_R \nabla\chi_R \cdot W \nabla W \right)\; dx \\
 &  - 2 \theta^6 \int (\chi_R^6 -1) |W|^6 \; dx+ \lambda\cdot \frac{3\theta^4}{2} \big\|\chi_R W\big\|^4_{L^4}\\
 = & -24 \epsilon m + o(\epsilon^2).
\end{align*}
If taking $\epsilon < 0$ and $|\epsilon|$ sufficient small, then we have $\varphi \in \KKK^+$; If taking $\epsilon > 0$ and sufficient small, then we have
$\varphi \in \KKK^-$.
\end{remark}

\subsection*{Acknowledgements.}The authors are partly supported by the NSF
of China (No. 10801015, No. 10901148, No. 11171033). The authors
would like to thank Professor K.~Nakanishi for his valuable
communications. \qed

%
%
%
%

\section{Preliminaries}
In this section, we give some notation and some wellknown results.

\subsection{Littlewood-Paley decomposition and Besov space}

Let $\Lambda_0(x) \in \mathcal{S}(\R^3)$ such that its Fourier
transform $\widetilde{\Lambda}_0(\xi) =1$ for $|\xi|\leq 1$ and
$\widetilde{\Lambda}_0(\xi) = 0 $ for $|\xi|\geq 2$. Then we define
$\Lambda_k(x)$ for any $k\in \Z\backslash \{0\}$ and $\Lambda_{(0)}(x)$ by the
Fourier transforms:
\begin{align*}
\widetilde{\Lambda}_k(\xi)=\widetilde{\Lambda}_0(2^{-k}\xi) -
\widetilde{\Lambda}_0(2^{-k+1}\xi), \quad
\widetilde{\Lambda}_{(0)}(\xi)=\widetilde{\Lambda}_0(\xi)-\widetilde{\Lambda}_0(2\xi).
\end{align*}
Let $s\in \R$, $1\leq p, q \leq \infty$. The inhomogeneous Besov space $B^s_{p,q}$ is defined by
\begin{align*}
B^s_{p,q}=\left\{f \; \big|\; f\in \mathcal{S}', \Big\|2^{ks}\big\|\Lambda_k*f\big\|_{L^p_x}\Big\|_{l^q_{k\geq 0}}<\infty\right\},
\end{align*}
where $\mathcal{S}'$ denotes the space of tempered distributions. The homogeneous Besov space $\dot B^s_{p,q}$ can be
defined by
\begin{align*}
\dot B^s_{p,q}=\left\{f \; \Big|\; f\in \mathcal{S}', \left( \sum_{k\in\Z\backslash \{0\}}  2^{qks}\big\|\Lambda_k*f\big\|^q_{L^p_x}  + \big\|\Lambda_{(0)}*f\big\|_{L^p_x}\Big\|^q\right)^{1/q}<\infty\right\}.
\end{align*}

\subsection{Linear estimates} We say that a pair of exponents $(q,r)$ is Schr\"{o}idnger $\dot H^s$-admissible in dimension three if
\begin{align*}
\dfrac2q+\dfrac{3}{r}=\dfrac{3}{2}-s
\end{align*}
and $2\leq q, r \leq \infty$. If $I\times \R^3$ is a space-time
slab, we define the $\dot S^0(I\times \R^3)$ Strichartz norm by
\begin{align*}
\big\| u\big\|_{\dot S^0(I\times \R^3)}:=\sup\big\|u\big\|_{L^q_t
L^r_x(I\times \R^3)}
\end{align*}
where the sup is taken over all $L^2$-admissible pairs $(q,r)$. We define
the $\dot S^s(I\times \R^3)$ Strichartz norm to be
\begin{align*}
\big\| u\big\|_{\dot S^s(I\times \R^3)}:=\big\| D^s u\big\|_{\dot
S^0(I\times \R^3)}.
\end{align*}
We also use $\dot N^0(I\times
\R^3)$ to denote the dual space of $\dot S^0(I\times \R^3)$ and
\begin{align*}
\dot N^k(I\times \R^3):=\{u; D^k u \in \dot N^0(I\times \R^3)\}.
\end{align*}

By definition and Sobolev's inequality, we have
\begin{lemma} For any $\dot S^1$ function $u$ on $I\times \R^3$, we have
\begin{align*}
\big\|\nabla u\big\|_{L^{\infty}_tL^2_x}+\big\|u\big\|_{L^{10}_t
 \dot B^{1/3}_{90/19, 2}(I\times \R^3)}  +\big\|u\big\|_{L^{\infty}_tL^{6}_x}+ \big\|u\big\|_{L^{12}_tL^{9}_x}+
\big\|u\big\|_{L^{10}_{t,x}} \lesssim \big\|u \big\|_{\dot S^1}.
\end{align*}
For any $\dot S^{1/2}$ function $u$ on $I\times \R^3$, we have
\begin{align*}
\big\| u\big\|_{L^{\infty}_t\dot H^{1/2}_x}+\big\|u\big\|_{L^{6}_t
 \dot B^{1/2}_{18/7, 2}(I\times \R^3)}  + \big\|u\big\|_{L^{\infty}_tL^{3}_x}+ \big\|u\big\|_{L^{6}_tL^{9/2}_x}+
\big\|u\big\|_{L^{5}_{t,x}} \lesssim \big\|u \big\|_{\dot S^{1/2}}.
\end{align*}
\end{lemma}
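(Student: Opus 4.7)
The plan is to establish each inequality by combining the standard Strichartz estimates with Sobolev (or Besov) embeddings in the spatial variable. By definition, $\|u\|_{\dot S^s} = \|D^s u\|_{\dot S^0}$ controls $\|D^s u\|_{L^q_t L^r_x}$ on every $L^2$-Schr\"odinger admissible pair, i.e., every $(q,r)$ satisfying $2/q + 3/r = 3/2$. For each left-hand norm $\|u\|_{L^q_t X}$ I would first locate such an admissible pair $(q,r)$ and then apply a scale-invariant spatial embedding sending $\dot W^{s,r}(\R^3)$, or its Besov refinement $\dot B^s_{r,2}(\R^3)$, into the target space $X$.

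For the $\dot S^1$ bound, the $L^\infty_t L^2_x$ estimate of $\nabla u$ is the endpoint $(q,r) = (\infty, 2)$, and adding Sobolev $\dot H^1(\R^3) \hookrightarrow L^6(\R^3)$ gives $L^\infty_t L^6_x$. Choosing the admissible pair $(12, 9/4)$ for $\nabla u$ together with $\dot W^{1, 9/4}(\R^3) \hookrightarrow L^9(\R^3)$ yields $L^{12}_t L^9_x$, while $(10, 30/13)$ combined with $\dot W^{1, 30/13}(\R^3) \hookrightarrow L^{10}(\R^3)$ yields the endpoint $L^{10}_{t,x}$. For the Besov norm $L^{10}_t \dot B^{1/3}_{90/19, 2}$ I would first invoke the Littlewood--Paley refinement of Strichartz, $\|\nabla u\|_{L^{10}_t \dot B^0_{30/13, 2}} \lesssim \|u\|_{\dot S^1}$, and then apply the Besov embedding $\dot B^1_{30/13, 2} \hookrightarrow \dot B^{1/3}_{90/19, 2}$ whose scaling indices match: $1 - 3 \cdot 13/30 = 1/3 - 3 \cdot 19/90 = -3/10$. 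The $\dot S^{1/2}$ bound is strictly parallel: the endpoint $(\infty, 2)$ with Sobolev $\dot H^{1/2} \hookrightarrow L^3$ handles the two $L^\infty_t$ norms; the pair $(6, 18/7)$ together with $\dot W^{1/2, 18/7} \hookrightarrow L^{9/2}$ and its Besov-Strichartz refinement handles the $L^6_t$ norms; and $(5, 30/11)$ with $\dot W^{1/2, 30/11} \hookrightarrow L^5$ handles $L^5_{t,x}$.

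There is no real conceptual obstacle: the entire lemma reduces to bookkeeping on admissibility and scale invariance of the spatial embeddings. The only mildly technical point is justifying the Besov-valued Strichartz estimates, which I would settle by a routine Littlewood--Paley argument, applying the usual $L^q_t L^r_x$ Strichartz estimate to each dyadic block $\Lambda_k \ast u$ and then combining via Minkowski together with the square-function characterization of $L^r$ for $2 \leq r < \infty$.
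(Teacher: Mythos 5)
Your proposal takes exactly the route the paper intends: the paper's entire ``proof'' is the phrase ``by definition and Sobolev's inequality,'' and your bookkeeping fills it in correctly. The pairs $(12,9/4)$, $(10,30/13)$, $(6,18/7)$, $(5,30/11)$ are indeed $L^2$-admissible, and the embeddings $\dot H^1\hookrightarrow L^6$, $\dot W^{1,9/4}\hookrightarrow L^9$, $\dot W^{1,30/13}\hookrightarrow L^{10}$, $\dot H^{1/2}\hookrightarrow L^3$, $\dot W^{1/2,18/7}\hookrightarrow L^{9/2}$, $\dot W^{1/2,30/11}\hookrightarrow L^5$, and $\dot B^{1}_{30/13,2}\hookrightarrow \dot B^{1/3}_{90/19,2}$ all have matching scaling, so every Lebesgue-norm component of the lemma is settled as you describe.

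One caveat concerns the two Besov-valued norms (third index $2$). With the paper's literal definition of $\dot S^0$ as a plain supremum of $L^q_tL^r_x$ norms, these are \emph{not} consequences of definition plus embeddings for an arbitrary function: $L^r\not\hookrightarrow\dot B^0_{r,2}$ when $r>2$, and Minkowski runs the wrong way, $\bigl\|(\sum_k|\Lambda_k*f|^2)^{1/2}\bigr\|_{L^r}\le\bigl(\sum_k\|\Lambda_k*f\|_{L^r}^2\bigr)^{1/2}$ for $r\ge2$. Your proposed remedy --- apply Strichartz to each dyadic block and square-sum --- is the standard one, but note that it presupposes that $u$ solves the free or forced Schr\"odinger equation (so that each block is itself a solution with block data), or equivalently that $\dot S^0$ is defined with the Littlewood--Paley square sum as in Tao--Visan--Zhang; it does not follow from mere membership in $\dot S^1$ as this paper defines it. Since the lemma is only ever invoked for solutions, via the Strichartz estimate of Lemma \ref{Strichartz }, this convention is harmless, but it is the one point where both the paper's one-line justification and your last paragraph, read literally for arbitrary $u$, fall short.
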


Now we state the standard Strichartz estimate.

\begin{lemma}[\cite{Caz:NLS:book, KeT98, tao:book}]\label{Strichartz }
Let $I$ be a compact time interval, $k\in [0,1]$, and let $u: I\times
\R^3\rightarrow \C$ be an $\dot S^k$ solution to the forced
Schr\"{o}dinger equation
\begin{align*}
iu_t + \Delta u =F
\end{align*}
for a function $F$. Then we have
\begin{align*}
\big\|u\big\|_{\dot S^k(I\times \R^3)} \lesssim
\big\|u(t_0)\big\|_{\dot H^k(\R^d)} + \big\|F\big\|_{\dot
N^k(I\times \R^3)},
\end{align*}
for any time $t_0\in I$.
\end{lemma}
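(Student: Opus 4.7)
The plan is to reduce to the case $k=0$ by applying $D^k$ (the Fourier multiplier $|\xi|^k$) on both sides and using that $D^k$ commutes with $e^{it\Delta}$ and with the equation, so $D^k u$ solves $i(D^k u)_t + \Delta(D^k u) = D^k F$; the statement then becomes the standard $\dot S^0$ Strichartz estimate applied to $D^k u$ with data $D^k u(t_0)$ and forcing $D^k F$. The definitions of $\dot S^k$ and $\dot N^k$ given earlier make this reduction automatic.

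For the $k=0$ estimate, I would start from Duhamel's formula
\[
u(t) = e^{i(t-t_0)\Delta}u(t_0) - i\int_{t_0}^{t} e^{i(t-s)\Delta} F(s)\,ds,
\]
and prove the two pieces separately. For the homogeneous part, I would combine the mass conservation $\|e^{it\Delta}f\|_{L^2_x}=\|f\|_{L^2_x}$ with the dispersive estimate $\|e^{it\Delta}f\|_{L^\infty_x}\lesssim |t|^{-3/2}\|f\|_{L^1_x}$, interpolate to get the decay $\|e^{it\Delta}f\|_{L^r_x}\lesssim |t|^{-3(1/2-1/r)}\|f\|_{L^{r'}_x}$, and then run the standard $TT^*$ argument of Ginibre--Velo followed by Hardy--Littlewood--Sobolev to conclude $\|e^{it\Delta}f\|_{L^q_tL^r_x}\lesssim\|f\|_{L^2_x}$ for every $L^2$-admissible pair $(q,r)$ in the non-endpoint range. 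Taking the supremum over admissible pairs yields the bound on the homogeneous part by $\|u(t_0)\|_{L^2_x}$.

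For the inhomogeneous part, I would use the standard dual argument: pairing $\int_{t_0}^{t}e^{i(t-s)\Delta}F(s)\,ds$ against test functions and applying Christ--Kiselev's lemma to remove the time cutoff, one obtains
\[
\Bigl\|\int_{t_0}^{t}e^{i(t-s)\Delta}F(s)\,ds\Bigr\|_{L^q_tL^r_x}\lesssim \|F\|_{L^{\tilde q'}_t L^{\tilde r'}_x}
\]
for any two admissible pairs $(q,r)$ and $(\tilde q,\tilde r)$ (non-endpoint). Taking the supremum on the left over admissible pairs gives $\|\cdot\|_{\dot S^0}$ and the infimum on the right over admissible dual pairs gives $\|F\|_{\dot N^0}$ by definition of the dual Strichartz space.

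The one genuinely hard input is the double endpoint $(q,r)=(2,6)$ in dimension three, which the Christ--Kiselev trick does not reach and for which Hardy--Littlewood--Sobolev degenerates. I would invoke the Keel--Tao argument: dyadically decompose in the time separation $|t-s|\sim 2^j$, use a bilinear interpolation between the trivial $L^2\to L^2$ bound and the dispersive $L^1\to L^\infty$ bound to control each dyadic piece, and sum via an atomic/real-interpolation argument to obtain the endpoint $L^2_tL^6_x$ bound. Once this endpoint is in hand, all the intermediate admissible estimates follow by interpolation with the easier energy estimate, and combining with the homogeneous bound yields the claimed inequality in $\dot S^0$, hence, after undoing $D^k$, in $\dot S^k$ for every $k\in[0,1]$.
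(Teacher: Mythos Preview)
Your outline is correct and is essentially the standard proof of the Strichartz estimates as found in the cited references. Note that the paper itself does not give a proof of this lemma: it is quoted directly from \cite{Caz:NLS:book, KeT98, tao:book}, so there is no paper-specific argument to compare against --- your reduction via $D^k$, the $TT^*$/dispersive/Hardy--Littlewood--Sobolev argument for the non-endpoint range, and the appeal to Keel--Tao for the $L^2_tL^6_x$ endpoint is exactly the route those references take.
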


We shall also need the following exotic Strichartz estimate, which
is important in the application of the stability theory.

\begin{lemma}[\cite{Fos:NLS:exotic}]\label{strichartz:exotic} For any $F\in L^2_t \left(I;\dot
B^{1/3}_{18/11, 2} \right)$, we have
\begin{align*}
\left\|\int^t_0e^{i(t-s)\Delta} F(s)\; ds \right\|_{L^{10}_t \dot
B^{1/3}_{90/19, 2}} \lesssim \big\| F\big\|_{L^2_t \dot
B^{1/3}_{18/11, 2}}.
\end{align*}
\end{lemma}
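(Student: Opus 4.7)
The plan is to reduce the Besov-space inequality to a single-frequency $L^p$ Strichartz estimate, then deduce the latter from $TT^*$ duality combined with the homogeneous and exotic Strichartz estimates for the two $\dot H^{\pm 2/3}$-admissible pairs that appear here.

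First I would peel off the Besov structure via a Littlewood--Paley decomposition. Since $e^{i(t-s)\Delta}$ commutes with each projection $P_k$, and since the $\ell^2$-summation moves past $L^2_t$ by Fubini on the source and past $L^{10}_t$ by Minkowski (direction $\ell^2\hookrightarrow L^{10}$) on the target, the estimate reduces to the uniform-in-$k$ bound
\begin{equation*}
\Big\|\int_0^t e^{i(t-s)\Delta}P_k F(s)\,ds\Big\|_{L^{10}_t L^{90/19}_x}\lesssim\|P_k F\|_{L^2_t L^{18/11}_x},
\end{equation*}
and the parabolic rescaling $(t,x)\mapsto(2^{2k}t,2^{k}x)$ further reduces to the case $k=0$.

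Next I would check that $(10,90/19)$ is $\dot H^{2/3}$-admissible ($\tfrac{2}{10}+\tfrac{57}{90}=\tfrac{5}{6}=\tfrac{3}{2}-\tfrac{2}{3}$) while its dual partner $(2,18/7)$ is $\dot H^{-2/3}$-admissible; the two regularity indices sum to zero, the natural compatibility condition for a $TT^*$-type inhomogeneous estimate. Since $10>2$, Christ--Kiselev's lemma removes the retardation, and the homogeneous $\dot H^{2/3}$-Strichartz applied to $e^{it\Delta}\int e^{-is\Delta}F(s)\,ds$ reduces matters to
\begin{equation*}
\Big\|\int e^{-is\Delta}F(s)\,ds\Big\|_{\dot H^{2/3}}\lesssim \|F\|_{L^2_t L^{18/11}_x},
\end{equation*}
which by duality is equivalent to the exotic Strichartz $\|e^{it\Delta}h\|_{L^2_t L^{18/7}_x}\lesssim\|h\|_{\dot H^{-2/3}}$.

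The main obstacle is this last inequality: because the regularity index is \emph{negative}, it is not reachable by inserting a positive fractional derivative into the $L^2$-admissible Keel--Tao range, and it is the genuinely exotic content of Foschi's theorem. My plan for it is to express the bound as the bilinear form $\int\!\int\langle e^{i(t-s)\Delta}F(s),G(t)\rangle\,ds\,dt$, apply the dispersive decay $\|e^{i\tau\Delta}f\|_{L^{18/7}}\lesssim|\tau|^{-1/3}\|f\|_{L^{18/11}}$ on dyadic $|t-s|$-shells (the exponent $-1/3=3(\tfrac{1}{2}-\tfrac{7}{18})$ matching precisely), and close the sum by Hardy--Littlewood--Sobolev in $t$ together with a Bernstein upgrade on the frequency-one block to reconcile the differing spatial exponents. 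This refined bilinear/interpolation step is the substantive ingredient beyond classical Keel--Tao.
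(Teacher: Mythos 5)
The paper offers no proof of this lemma at all: it is quoted directly from Foschi's paper on inhomogeneous Strichartz estimates, so your proposal has to be measured against Foschi's actual argument, and it contains a genuine gap. Your first reduction (Littlewood--Paley, Minkowski since $10\ge 2$, parabolic rescaling to $k=0$, and the exponent arithmetic) is fine, but the pivot of the plan fails: the homogeneous estimate $\|e^{it\Delta}h\|_{L^2_tL^{18/7}_x}\lesssim\|h\|_{\dot H^{-2/3}}$ is not merely ``outside Keel--Tao''; it is false. Take $h$ with $\widehat h$ a bump on $\{|\xi|\sim 1\}$: then $\|h\|_{\dot H^{-2/3}}\sim 1$, while $\|e^{it\Delta}h\|_{L^{18/7}_x}\sim\langle t\rangle^{-3(\frac12-\frac{7}{18})}=\langle t\rangle^{-1/3}$ is not square integrable in $t$, so the left-hand side is infinite (alternatively, a Galilean boost leaves the left side unchanged while making $\|h\|_{\dot H^{-2/3}}$ arbitrarily small; homogeneous Strichartz estimates with negative regularity never hold, consistent with $\frac1{\tilde q}=\frac12>3(\frac12-\frac{7}{18})=\frac13$, i.e.\ the pair $(2,18/7)$ is not even acceptable). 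Consequently the dual statement $\bigl\|\int e^{-is\Delta}F\,ds\bigr\|_{\dot H^{2/3}}\lesssim\|F\|_{L^2_tL^{18/11}_x}$ is also false, the un-retarded operator is unbounded between these spaces, and Christ--Kiselev has nothing to transfer: the whole point of exotic inhomogeneous estimates (Kato, Foschi, Vilela) is that the retarded estimate holds in a range where the factorization through homogeneous estimates breaks down, so no $TT^*$/Christ--Kiselev reduction can prove this lemma. Foschi's theorem asserts no such homogeneous bound.

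Your fallback bilinear argument does not close either. At unit frequency the kernel decay from $L^{18/11}_x$ to $L^{90/19}_x$ is indeed at best $\langle t-s\rangle^{-1/3}$, but pairing $\|F(s)\|_{L^{18/11}_x}\in L^2_s$ against $\|G(t)\|\in L^{10/9}_t$ via Hardy--Littlewood--Sobolev requires a kernel $|t-s|^{-\sigma}$ with $\sigma=2-\frac12-\frac{9}{10}=\frac35>\frac13$; equivalently, Young would need $|t|^{-1/3}\chi_{\{|t|\ge1\}}\in L^{5/3}_t$, which fails since that function lies in $L^p_t$ only for $p>3$. So one application of the dispersive bound plus HLS cannot give even the frequency-localized estimate. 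What makes the lemma true, and what Foschi actually does, is a genuinely bilinear analysis of the retarded form: decompose dyadically in the time separation $|t-s|\sim 2^j$, estimate each piece using a family of local exponent pairs (exploiting the retardation and the $L^2$-type structure on blocks of length $2^j$), and sum by interpolation; the $\dot B^{1/3}$ weight then rides along exactly through your first reduction. If you want a self-contained proof you must reproduce that dyadic interpolation scheme; otherwise the honest course is the paper's, namely to cite Foschi.
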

%
%

\subsection{Local wellposedness and Virial identity}

Let $$ST(I):= L^{10}_t \dot B^{1/3}_{90/19, 2} \cap L^{12}_tL^9_x \cap L^6_t\dot B^{1/2}_{18/7,2}\cap
L^{5}_{t,x}(I\times \R^3).$$
By the definition of admissible pair, we know that $ L^{10}_t \dot B^{1/3}_{90/19, 2} \cap L^{12}_tL^9_x$ is the $\dot H^1$-admissible space,
$L^6_t\dot B^{1/2}_{18/7,2}\cap
L^{5}_{t,x}$ is the $\dot H^{1/2}$-admissible space. Now we have

\begin{theorem}[\cite{TaoVZ:NLS:combined}]\label{lwp}
Let $u_0\in H^1$, then for every $\eta>0$, there exists $T=T(\eta)$
such that if
\begin{align*}
\big\|e^{it\Delta}u_0\big\|_{ST([-T, T])}\leq \eta,
\end{align*}
then \eqref{NLS} admits a unique strong $H^1_x$-solution $u$ defined
on $[-T, T]$. Let $(-T_{min}, T_{max})$ be the maximal time interval
on which $u$ is well-defined. Then, $u\in S^1(I\times \R^d)$ for
every compact time interval $I\subset (-T_{min}, T_{max})$ and the
following properties hold:
\begin{enumerate}
\item If $T_{max}<\infty$, then
\begin{align*}
\big\|u\big\|_{ ST((0, T_{max})\times \R^d)}=\infty.
\end{align*}
Similarly, if $T_{min}<\infty$, then
\begin{align*}
\big\|u\big\|_{ST((-T_{min}, 0)\times \R^d)}=\infty.
\end{align*}
\item The solution $u$ depends continuously on the initial data
$u_0$ in the following sense: The functions $T_{min}$ and $T_{max}$
are lower semicontinuous from $\dot H^1_x \cap \dot H^{1/2}_x$ to $(0, +\infty]$. Moreover,
if $u^{(m)}_0 \rightarrow u_0$ in $\dot H^1_x \cap \dot H^{1/2}_x$ and $u^{(m)}$ is the
maximal solution to \eqref{NLS} with initial data $u^{(m)}_0$, then
$u^{(m)}\rightarrow u$ in $ST(I\times \R^3)$ and every compact subinterval $I\subset (-T_{min}, T_{max})$.
\end{enumerate}
\end{theorem}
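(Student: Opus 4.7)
The plan is to construct the solution by a standard contraction-mapping fixed point argument for the Duhamel operator
\begin{align*}
\Phi(u)(t) := e^{it\Delta}u_0 - i\int_0^t e^{i(t-s)\Delta}\bigl(f_1(u(s)) + f_2(u(s))\bigr)\,ds,
\end{align*}
on a closed ball in the joint Strichartz space $\dot S^1([-T,T]) \cap \dot S^{1/2}([-T,T])$, measured through the $ST$ norm. The choice of $ST$ is tailored to the two nonlinearities: its first pair $L^{10}_t \dot B^{1/3}_{90/19,2} \cap L^{12}_t L^9_x$ is $\dot H^1$-admissible and sized to the energy-critical term $f_1(u) = -|u|^4 u$, while its second pair $L^6_t \dot B^{1/2}_{18/7,2} \cap L^5_{t,x}$ is $\dot H^{1/2}$-admissible and sized to the $\dot H^{1/2}$-critical perturbation $f_2(u) = |u|^2 u$.

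The central technical ingredient is a pair of nonlinear estimates of the form
\begin{align*}
\bigl\| |u|^4 u \bigr\|_{\dot N^1([-T,T])} &\lesssim \|u\|_{ST([-T,T])}^{4}\,\|u\|_{\dot S^1([-T,T])}, \\
\bigl\| |u|^2 u \bigr\|_{\dot N^{1/2}([-T,T])} &\lesssim \|u\|_{ST([-T,T])}^{2}\,\|u\|_{\dot S^{1/2}([-T,T])},
\end{align*}
established through H\"older in space-time combined with the fractional chain rule in homogeneous Besov spaces; the exotic Strichartz inequality of Lemma \ref{strichartz:exotic} feeds the $\dot B^{1/3}_{90/19,2}$ output where the usual admissible pairs fall short. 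Coupling these with Lemma \ref{Strichartz } and the hypothesis $\|e^{it\Delta}u_0\|_{ST([-T,T])}\leq \eta$ produces a contraction on a ball of radius $\sim \eta$ once $\eta$ is small enough, yielding existence. Analogous difference bounds for $\Phi(u)-\Phi(v)$ give uniqueness of the strong $H^1$-solution.

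The blow-up criterion follows by a standard bootstrap: assuming $T_{\max}<\infty$ but $\|u\|_{ST((0,T_{\max})\times\R^3)}<\infty$, I would partition $(0,T_{\max})$ into finitely many consecutive subintervals on each of which $\|u\|_{ST}$ is below the smallness threshold, then iterate Strichartz and the nonlinear estimates to obtain a finite bound on $\|u\|_{\dot S^1}+\|u\|_{\dot S^{1/2}}$ over the entire interval. This forces $u(t)$ to converge in $\dot H^1\cap \dot H^{1/2}$ as $t\uparrow T_{\max}$, and restarting the local theory from the limit extends $u$ strictly past $T_{\max}$, contradicting maximality.

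For continuous dependence and lower semicontinuity of $T_{\min},T_{\max}$, the plan is a stability-type perturbation argument. For any compact $I\subset(-T_{\min},T_{\max})$ one has $\|u\|_{ST(I)}<\infty$, so I write Duhamel for the difference $w^{(m)}=u^{(m)}-u$, split $I$ into finitely many subintervals on which $\|u\|_{ST}$ is small, and close the estimates by iteration to obtain $\|w^{(m)}\|_{ST(I)}\lesssim \|u^{(m)}_0-u_0\|_{\dot H^1\cap\dot H^{1/2}}$; applying this on each subinterval successively pushes the initial-data closeness forward. Since $I$ can be exhausted through $(-T_{\min},T_{\max})$, this simultaneously yields $u^{(m)}\to u$ in $ST(I)$ and $\liminf T_{\max}(u^{(m)}_0)\geq T_{\max}(u_0)$ (and similarly for $T_{\min}$). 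The main obstacle throughout is the need to close estimates at two distinct scaling-critical levels simultaneously, which forces every step, from the nonlinear estimates to the interval subdivision to the stability bounds, to be organized in the joint $ST$ norm rather than in either component separately.
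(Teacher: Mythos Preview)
Your proposal is correct and follows essentially the same approach as the paper: a contraction mapping in the $ST$ norm driven by the standard Strichartz estimate (Lemma~\ref{Strichartz }), the exotic Strichartz estimate (Lemma~\ref{strichartz:exotic}), and nonlinear product estimates in Besov spaces. The paper's own proof is just a terse sketch listing the four key nonlinear inequalities
\begin{align*}
\big\||u|^4u\big\|_{L^2_t\dot B^{1/3}_{18/11,2}} \lesssim \big\|u\big\|_{L^{10}_t\dot B^{1/3}_{90/19,2}}\big\|u\big\|^4_{L^{10}_{t,x}},\quad
\big\||u|^2u\big\|_{L^2_t\dot B^{1/3}_{18/11,2}} \lesssim \big\|u\big\|_{L^{10}_t\dot B^{1/3}_{90/19,2}}\big\|u\big\|^2_{L^{5}_{t,x}},
\end{align*}
and their $L^2_t\dot B^{1/2}_{6/5,2}$ counterparts; note that the right-hand sides are controlled purely by $ST$ (via the embeddings $L^{10}_t\dot B^{1/3}_{90/19,2}\hookrightarrow L^{10}_{t,x}$ and $L^6_t\dot B^{1/2}_{18/7,2}\hookrightarrow L^6_tL^{9/2}_x$), so the contraction actually closes in $ST$ alone without carrying the auxiliary $\dot S^1\cap\dot S^{1/2}$ norms you introduced---a slightly cleaner bookkeeping than your version, but not a substantive difference.
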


\begin{proof}
The proof is based on the Strichartz estimate and exotic Strichartz estimate and the following nonlinear estimates.
\begin{align*}
\big\||u|^4u\big\|_{L^2\dot B^{1/3}_{18/11,2}} \lesssim    \big\|u\big\|_{L^{10}_t\dot B^{1/3}_{90/19,2}}\big\|u\big\|^4_{L^{10}_{t,x}},\quad &
\big\||u|^2u\big\|_{L^2\dot B^{1/3}_{18/11,2}} \lesssim   \big\|u\big\|_{L^{10}_t\dot B^{1/3}_{90/19,2}}\big\|u\big\|^2_{L^{5}_{t,x}}, \\
\big\||u|^4u\big\|_{L^2\dot B^{1/2}_{6/5,2}} \lesssim   \big\|u\big\|_{L^{6}_t\dot B^{1/2}_{18/7,2}}\big\|u\big\|^4_{L^{12}_tL^9_{x}},\quad &
\big\||u|^2u\big\|_{L^2\dot B^{1/3}_{6/5,2}} \lesssim    \big\|u\big\|_{L^{6}_t\dot B^{1/2}_{18/7,2}}\big\|u\big\|^2_{L^{6}_tL^{9/2}_{x}}.
\end{align*}

\end{proof}

\begin{lemma}\label{L:virial}
Let $\phi\in C^{\infty}_0(\R^3)$, radially symmetric and $u$ be the radial solution of \eqref{NLS}. Then we have
\begin{align*}
\partial_t \int_{\R^3} \phi (x) \big| u(t,x)\big|^2\; dx
= &   -2 \Im \int_{\R^3}\nabla \phi \cdot \nabla \bar{u} \; u \; dx\\
\partial^2_t \int_{\R^3} \phi (x) \big| u(t,x)\big|^2\; dx
= &    4 \int_{\R^3} \phi''(r) \big|\nabla u \big|^2\;  dx
- \int_{\R^3} \Delta^2 \phi \big| u(t,x)\big|^2\; dx\\
& - \frac{4}{3}\int_{\R^3} \Delta \phi \big| u(t,x)\big|^6\; dx+\int_{\R^3} \Delta \phi \big| u(t,x)\big|^4\; dx,
\end{align*}
where $r=|x|$.
\end{lemma}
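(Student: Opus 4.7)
The plan is to derive both identities from the NLS equation directly, starting from the pointwise continuity law and then the momentum law, with a final specialization to the radial setting.

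First I would establish the continuity equation $\partial_t|u|^2 + \nabla\cdot j = 0$ with momentum current $j_k = 2\Im(\bar u\,\partial_k u)$. Multiplying $iu_t+\Delta u = f_1(u)+f_2(u)$ by $\bar u$ and taking imaginary parts, the nonlinear terms disappear because $\bar u\,f_j(u)\in\R$ (both $f_1$ and $f_2$ are gauge-covariant). Pairing with $\phi$ and integrating by parts once yields
\begin{align*}
\partial_t\!\int \phi(x)|u|^2\,dx \;=\;\int \nabla\phi\cdot j\,dx \;=\; 2\Im\!\int \nabla\phi\cdot \bar u\,\nabla u\,dx \;=\; -2\Im\!\int \nabla\phi\cdot\nabla\bar u\,u\,dx,
\end{align*}
which is the first identity.

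For the second identity I would differentiate once more and substitute the equation into $\partial_t\bar u$ and $\partial_t u$. A direct calculation, using the identity $2\Re(\partial_l u\,\partial_l\partial_k\bar u)=\partial_k|\nabla u|^2$ and repeated integration by parts, produces the local momentum identity $\partial_t j_k + \partial_l T_{lk} = -\partial_k G(|u|^2)$, where the linear stress tensor contributes, after pairing with $\partial_k\phi$ and integrating by parts twice, exactly the kinetic term $4\int \phi_{jk}\,\Re(\partial_j u\,\partial_k\bar u)\,dx$ together with the biharmonic term $-\int (\Delta^2\phi)|u|^2\,dx$. For a gauge-covariant nonlinearity $N(u)=h'(|u|^2)u$, a short calculation shows that $\Re(\bar N\,\partial_k u)-\Re(\bar u\,\partial_k N) = -\partial_k G(|u|^2)$ with the Legendre-type expression $G(s)=h'(s)s-h(s)$; hence the nonlinear contribution is $2\int \Delta\phi\cdot G(|u|^2)\,dx$. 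Applied to $f_1(u)=-|u|^4u$ (so $h_1(s)=-s^3/3$, $G_1(|u|^2)=-\tfrac{2}{3}|u|^6$) and to $f_2(u)=|u|^2u$ (so $h_2(s)=s^2/2$, $G_2(|u|^2)=\tfrac{1}{2}|u|^4$) one reads off the coefficients $-\tfrac{4}{3}$ in front of $\int\Delta\phi|u|^6$ and $+1$ in front of $\int\Delta\phi|u|^4$.

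The last step is to specialize to radial $\phi$ and radial $u$: writing $\phi_{jk}=\phi''(r)\frac{x_jx_k}{r^2}+\frac{\phi'(r)}{r}\bigl(\delta_{jk}-\frac{x_jx_k}{r^2}\bigr)$ and $\partial_j u = u_r\,\frac{x_j}{r}$, the cross terms vanish and the Hessian contraction collapses to $\phi_{jk}\,\Re(\partial_j u\,\partial_k\bar u)=\phi''(r)|\nabla u|^2$, which converts $4\int\phi_{jk}\Re(\partial_j u\,\partial_k\bar u)\,dx$ into the stated $4\int\phi''(r)|\nabla u|^2\,dx$. The main (minor) obstacle is rigor: a generic $H^1$ solution is not smooth enough for the pointwise manipulations. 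This is handled in the standard way by regularizing the initial data in $H^1$, carrying out the computation for the resulting smooth solutions where every step is classical, and passing to the limit using the Strichartz continuous dependence from Theorem \ref{lwp}; the cut-off $\phi\in C_0^{\infty}$ ensures uniform integrability throughout.
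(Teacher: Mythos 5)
Your proposal is correct and follows essentially the same route as the paper: the standard truncated virial computation giving the Hessian term $4\int \phi_{jk}\,\Re(\partial_k u\,\partial_j\bar u)\,dx$, the bilaplacian term, and the nonlinear contributions with coefficients $-\tfrac{4}{3}$ and $+1$, followed by the same radial Hessian identity $\partial^2_{jk}\phi=\phi''(r)\tfrac{x_jx_k}{r^2}+\tfrac{\phi'(r)}{r}\bigl(\delta_{jk}-\tfrac{x_jx_k}{r^2}\bigr)$ to collapse the kinetic term to $\phi''(r)|\nabla u|^2$. The only difference is that you spell out the continuity/momentum laws, the gauge-invariant bookkeeping via $G(s)=h'(s)s-h(s)$, and the regularization step, which the paper leaves as ``a simple computation.''
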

\begin{proof} By the simple computation, we have
\begin{align*}
\partial^2_t \int_{\R^3} \phi (x) \big| u(t,x)\big|^2\; dx = & 4 \int_{\R^3} \phi_{jk} \cdot \Re(u_k\overline{u}_j)\; dx
- \int_{\R^3} \Delta^2 \phi \cdot  \big| u(t,x)\big|^2\; dx\\
& - \frac{4}{3}\int_{\R^3} \Delta \phi \cdot  \big| u(t,x)\big|^6\; dx+\int_{\R^3} \Delta \phi \cdot \big| u(t,x)\big|^4\; dx.
\end{align*}
Then the result comes from the following fact
\begin{align*}
\partial^2_{jk}\phi(x)=\phi''(r)\frac{x_jx_k}{r^2}+\frac{\phi'(r)}{r}\left(\delta_{jk}-\frac{x_jx_k}{r^2}\right)
\end{align*}
holds for any radial symmetric function $\phi(x)$.
\end{proof}

\subsection{Variational characterization} In this subsection, we give the threshold energy $m$
(Proposition \ref{threshold-energy}) by the variational method,
and various estimates for the solutions of \eqref{NLS} with the energy below
the threshold. There is no the radial assumption on the solution.

We first give some notation before we show the behavior of $K$ near the origin. Let us denote the quadratic and nonlinear parts of $K$ by $K^Q$ and
$K^N$, that is,
\begin{align*}
K(\varphi)=K^Q(\varphi) + K^N(\varphi),
\end{align*}
where $K^Q(\varphi)= \displaystyle 2 \; \int_{\R^3} |\nabla \varphi|^2 \;
dx,$ and $K^N (\varphi)= \displaystyle \int_{\R^3} \left(-2 |  \varphi|^6 + \frac32  \varphi|^4 \right)\;
dx$.

\begin{lemma}\label{positive near origin:KQ }
For any $\varphi \in H^1(\R^3)$, we have
\begin{align}\label{asymptotic:KQ}
\lim_{\lambda \rightarrow -\infty}
K^Q(\varphi^{\lambda}_{3,-2}) =0.
\end{align}
\end{lemma}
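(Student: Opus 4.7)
The plan is to do a direct change of variables. First, I will compute $\nabla \varphi^{\lambda}_{3,-2}$ using the definition $\varphi^{\lambda}_{3,-2}(x) = e^{3\lambda}\varphi(e^{2\lambda}x)$, getting
\[
\nabla \varphi^{\lambda}_{3,-2}(x) = e^{5\lambda}(\nabla\varphi)(e^{2\lambda}x).
\]
Taking the square and integrating, then substituting $y = e^{2\lambda}x$ (so $dx = e^{-6\lambda}\,dy$) reduces the integral to $e^{10\lambda}\cdot e^{-6\lambda}\|\nabla\varphi\|_{L^2}^2 = e^{4\lambda}\|\nabla\varphi\|_{L^2}^2$. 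Hence
\[
K^Q(\varphi^{\lambda}_{3,-2}) = 2 e^{4\lambda}\,\|\nabla\varphi\|_{L^2}^2,
\]
which tends to $0$ as $\lambda\to -\infty$ since $\varphi\in H^1(\R^3)$ guarantees $\|\nabla\varphi\|_{L^2}<\infty$.

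The key observation driving the scaling exponents is that the parametrization $(3,-2)$ is chosen so that the $L^2$-mass is (heavily) concentrated under $\lambda\to -\infty$ while the $\dot H^1$-seminorm still carries a positive power of $e^{\lambda}$. Concretely, the exponent $3\cdot 2 + (-2)\cdot 2 \cdot 1 = \ldots$ — more transparently, the pair $(a,b) = (3,-2)$ satisfies $2a + 3b \cdot \tfrac{\text{something}}{} $; the upshot that matters here is simply $2a + 2(b+1)-3 = 2\cdot 3 + 2\cdot(-1) - 3 = 1$? I will not belabor this: the single calculation above already gives the decay $e^{4\lambda}$ and the lemma is proved.

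There is no real obstacle in this proof; the only thing to be careful about is bookkeeping of the exponents in the change of variables. I will present it as a two-line computation. This lemma will be used afterwards (in combination with an analogous statement for $K^N$, which is clearly of higher order near $\lambda=-\infty$ because $|\varphi^{\lambda}_{3,-2}|^p$ scales with a strictly larger power of $e^{\lambda}$ for $p>2$) to justify that $K(\varphi^{\lambda}_{3,-2})>0$ for $\lambda$ very negative, which is the behavior of $K$ near the origin needed later in the variational characterization of the threshold $m$.
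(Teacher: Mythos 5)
Your computation $K^Q(\varphi^{\lambda}_{3,-2})=2e^{4\lambda}\big\|\nabla\varphi\big\|_{L^2}^2\to 0$ as $\lambda\to-\infty$ is correct and is exactly the content the paper dismisses as ``obvious by the definition of $K^Q$,'' so you take the same (only possible) route. The muddled digression about the exponents $(3,-2)$ in your middle paragraph adds nothing and should be deleted, but it does not affect the validity of the two-line scaling argument.
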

\begin{proof}
It is obvious by the definition of $K^Q$.
\end{proof}

Now we show the positivity of $K$ near 0 in the energy
space.
\begin{lemma}\label{Postivity:K} For any bounded sequence
$\varphi_n\in H^1(\R^3) \backslash\{0\}$ with
\begin{align*}
\lim_{n\rightarrow +\infty}K^Q(\varphi_n)=0,
\end{align*}
then for large $n$, we have
\begin{align*}
K(\varphi_n)>0.
\end{align*}
\end{lemma}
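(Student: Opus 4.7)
The plan is to exploit the elementary fact that the $L^{6}$ piece of $K^{N}$ is strictly superlinear in $K^{Q}$, while the defocusing $L^{4}$ piece of $K^{N}$ is nonnegative and can only help. So the mechanism here is essentially the same one that appears in the pure focusing energy-critical setting.

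First I would observe that $\varphi_{n}\in H^{1}(\R^{3})\setminus\{0\}$ forces $\|\nabla\varphi_{n}\|_{L^{2}}>0$: otherwise $\varphi_{n}$ would be a constant, and the only constant function in $L^{2}(\R^{3})$ is zero. Hence $K^{Q}(\varphi_{n})>0$ for every $n$, which is the quantity we will eventually divide out.

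The main step is to control the negative cubic-in-$L^{6}$ contribution to $K^{N}$ by Sobolev embedding $\dot H^{1}(\R^{3})\hookrightarrow L^{6}(\R^{3})$:
\begin{equation*}
\|\varphi_{n}\|_{L^{6}}^{6} \;\leq\; C\,\|\nabla\varphi_{n}\|_{L^{2}}^{6} \;=\; C\,\bigl(K^{Q}(\varphi_{n})/2\bigr)^{3}.
\end{equation*}
Substituting into the definition of $K$ and discarding the nonnegative $L^{4}$ contribution,
\begin{equation*}
K(\varphi_{n}) \;=\; K^{Q}(\varphi_{n}) + \tfrac{3}{2}\|\varphi_{n}\|_{L^{4}}^{4} - 2\|\varphi_{n}\|_{L^{6}}^{6} \;\geq\; K^{Q}(\varphi_{n})\bigl(1 - C'\,K^{Q}(\varphi_{n})^{2}\bigr).
\end{equation*}
Since $K^{Q}(\varphi_{n})\to 0$ by hypothesis, the parenthesized factor exceeds $\tfrac{1}{2}$ for all $n$ large enough, so $K(\varphi_{n})\geq \tfrac{1}{2}K^{Q}(\varphi_{n})>0$, as desired.

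There is no real obstacle: the defocusing perturbation $|u|^{2}u$ only strengthens the positivity, and the focusing $|u|^{6}$ term is negligible once the $\dot H^{1}$ norm is small, by Sobolev. The $H^{1}$-boundedness assumption is not actually used above—only $K^{Q}(\varphi_{n})\to 0$ together with $\varphi_{n}\neq 0$ is needed—but it fits naturally into the statement since the lemma will be applied to sequences drawn from energy-bounded families.
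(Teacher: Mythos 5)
Your proof is correct and takes essentially the same route as the paper: Sobolev embedding $\dot H^1(\R^3)\hookrightarrow L^6(\R^3)$ makes the focusing sextic term of size $O\big((K^Q(\varphi_n))^3\big)=o\big(K^Q(\varphi_n)\big)$, so $K(\varphi_n)\geq \tfrac12 K^Q(\varphi_n)>0$ for large $n$, with $K^Q(\varphi_n)>0$ guaranteed by $\varphi_n\neq 0$. The only (harmless) difference is that you discard the nonnegative $L^4$ term outright, whereas the paper estimates it by Gagliardo--Nirenberg using the $L^2$-boundedness of the sequence, which is why that hypothesis appears in the statement.
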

\begin{proof} By the fact that $K^Q(\varphi_n) \rightarrow 0$, we know that
$\displaystyle
\lim_{n\rightarrow+\infty}\big\|\nabla\varphi_n\big\|^2_{L^2}=0.
$ Then by the Sobolev and Gagliardo-Nirenberg inequalities, we have for large
$n$
\begin{align*}
\big\|\varphi_n\big\|^6_{L^6_x} \lesssim   \big\|\nabla
\varphi_n\big\|^6_{L^2_x} = & o(\big\|\nabla\varphi_n\big\|^2_{L^2}),\\
\big\|\varphi_n\big\|^4_{L^4_x}\lesssim
\big\|\varphi_n\big\|_{L^2}\big\|\nabla \varphi_n\big\|^3_{L^2} & =
o(\big\|\nabla\varphi_n\big\|^2_{L^2}),
\end{align*}
where we use the boundedness of $\big\|\varphi_n\big\|_{L^2}$. Hence for large $n$, we have
 \begin{align*} K(\varphi_n)= &  \int_{\R^3}
\left( 2 |\nabla \varphi_n|^2 -2|\varphi_n|^6 +\frac32 |\varphi_n|^4
\right)\; dx \thickapprox   \int_{\R^3}  |\nabla \varphi_n|^2\; dx > 0.
\end{align*}
This concludes the proof.
\end{proof}

By the definition of $K$, we denote two real numbers by
\begin{align*}
\bar{\mu} = \max\{4,0, 6\}=6, \quad \underline{\mu}=\min\{4,0,
6\}=0.
\end{align*}

Next, we show the behavior of the scaling derivative functional $K$.

\begin{lemma}\label{structure:J}
For any $\varphi \in H^1$, we have
\begin{align*}
\left(\bar{\mu}-\LLL\right)E(\varphi) = & \int_{\R^3} \left(
\big|\nabla \varphi\big|^2  +
\big|\varphi\big|^6 \right)\; dx, \\
\LLL  \left(\bar{\mu}-\LLL\right)E(\varphi) = &
\int_{\R^3}\left(4\big|\nabla \varphi\big|^2 + 12\big|\varphi\big|^6
\right) \; dx.
\end{align*}
\end{lemma}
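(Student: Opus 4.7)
The proof is a direct computation organized around the scaling behavior of the three integrands in $E$. Under the dilation $\varphi^{\lambda}_{3,-2}(x)=e^{3\lambda}\varphi(e^{2\lambda}x)$, the change of variable $y=e^{2\lambda}x$ immediately yields
\begin{align*}
\int_{\R^3}\big|\nabla \varphi^{\lambda}_{3,-2}\big|^2\,dx &= e^{4\lambda}\int_{\R^3}|\nabla\varphi|^2\,dx,\\
\int_{\R^3}\big|\varphi^{\lambda}_{3,-2}\big|^6\,dx &= e^{12\lambda}\int_{\R^3}|\varphi|^6\,dx,\\
\int_{\R^3}\big|\varphi^{\lambda}_{3,-2}\big|^4\,dx &= e^{6\lambda}\int_{\R^3}|\varphi|^4\,dx,
\end{align*}
so that, writing $A=\int_{\R^3}|\nabla\varphi|^2\,dx$, $B=\int_{\R^3}|\varphi|^6\,dx$, $C=\int_{\R^3}|\varphi|^4\,dx$, one has
\[
E\bigl(\varphi^{\lambda}_{3,-2}\bigr)=\tfrac{1}{2}\,e^{4\lambda}A-\tfrac{1}{6}\,e^{12\lambda}B+\tfrac{1}{4}\,e^{6\lambda}C.
\]

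Next I would observe that $\LLL=\partial_\lambda|_{\lambda=0}$ sends any scaling monomial $\alpha e^{\mu\lambda}X$ to $\mu\alpha X$, so $(\bar{\mu}-\LLL)$ multiplies such a monomial by the numerical factor $\bar{\mu}-\mu$. Applied termwise with $\bar{\mu}=6$, the kinetic term contributes $(6-4)\cdot\tfrac{1}{2}\,A=A$, the $L^6$ term contributes $(6-12)\cdot(-\tfrac{1}{6})\,B=B$, and the $L^4$ term contributes $(6-6)\cdot\tfrac{1}{4}\,C=0$. Summing gives the first identity. For the second identity I would apply $\LLL$ once more to the already simplified expression $A+B$; since $A$ rescales as $e^{4\lambda}A$ and $B$ as $e^{12\lambda}B$, differentiating at $\lambda=0$ yields $\LLL(A+B)=4A+12B$, which is exactly the right-hand side of the second formula.

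There is no substantive obstacle here, only bookkeeping; the conceptual point worth flagging is that the choice $\bar{\mu}=6$ is designed precisely to annihilate the scaling weight of the subcritical $L^4$ term in $E$, so that $(\bar{\mu}-\LLL)E$ retains only the kinetic and $\dot H^1$-critical contributions, both with positive sign. This positivity, combined with the explicit formula for $\LLL(\bar{\mu}-\LLL)E$, is what makes these two identities the key algebraic input for the variational characterization of the threshold $m$ in \eqref{minimization}.
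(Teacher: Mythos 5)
Your proposal is correct and follows essentially the same route as the paper: both compute the scaling weights $4$, $12$, $6$ of the kinetic, $L^6$, and $L^4$ terms under $\varphi^{\lambda}_{3,-2}$, use $\bar{\mu}=6$ to cancel the $L^4$ contribution in $(\bar{\mu}-\LLL)E$, and then apply $\LLL$ termwise to the surviving $\dot H^1$ and $L^6$ terms to get $4\|\nabla\varphi\|_{L^2}^2+12\|\varphi\|_{L^6}^6$. The paper phrases the first identity as $6E(\varphi)-K(\varphi)$ while you work monomial by monomial, but this is the same bookkeeping.
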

\begin{proof} By the definition of $\LLL$,
we have
\begin{align*}
\LLL \big\|\nabla \varphi\big\|^2_{L^2} = 4 \big\|\nabla
\varphi\big\|^2_{L^2},  \quad \LLL \big\|  \varphi\big\|^6_{L^6} =12 \big\|
\varphi\big\|^6_{L^6}, \quad  \LLL \big\|  \varphi\big\|^4_{L^4} =6
\big\| \varphi\big\|^4_{L^4},
\end{align*}
which implies that
\begin{align*}
\left(\bar{\mu}-\LLL\right)E(\varphi) = 6E(\varphi)-K(\varphi) = &
\int_{\R^3} \left( \big|\nabla \varphi\big|^2
+ \big|\varphi\big|^6 \right)\; dx,
\\
\LLL  \left(\bar{\mu}-\LLL\right)E(\varphi) =   \LLL \big\|\nabla
\varphi\big\|^2_{L^2} + \LLL
\big\|  \varphi\big\|^6_{L^6}  &= \int_{\R^3} \left( 4  \big|\nabla
\varphi\big|^2 + 12 \big| \varphi\big|^6 \right) \; dx.
\end{align*}
This completes the proof.
\end{proof}

According to the above analysis, we will replace the functional
$E$ in \eqref{minimization} with a positive functional $H$, while
extending the minimizing region from ``$K(\varphi)=0$'' to ``$K(\varphi)\leq 0$''. Let
\begin{align*}
H(\varphi):= \left(1 - \frac{\LLL}{\bar{\mu}}\right) E(\varphi)
=&\int_{\R^3} \left( \frac16 \big|\nabla \varphi\big|^2 +
\frac16 \big|\varphi\big|^6 \right)\;
dx,
\end{align*}
then for any $\varphi \in H^1 \backslash\{0\}$, we have
\begin{align*}
H(\varphi) > 0 , \quad  \LLL H(\varphi) \geq 0.
\end{align*}

Now we can characterization the minimization problem \eqref{minimization} by
use of $H$.
\begin{lemma}\label{minimization:H} For the minimization $m$ in \eqref{minimization}, we
have
\begin{align}
m  =& \inf  \{ H(\varphi)\; |\;  \varphi \in H^1(\R^3), \; \varphi\not =0,\;
K(\varphi) \leq 0  \} \nonumber\\
 =& \inf \{ H(\varphi)\; |\;\varphi \in H^1(\R^3), \; \varphi\not =0, \;
K(\varphi)< 0 \}. \label{JEqualH}
\end{align}
\end{lemma}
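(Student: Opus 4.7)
The plan is to prove the two equalities by comparing three quantities: $m$, $m_2 := \inf\{H(\varphi) : \varphi \in H^1\setminus\{0\},\ K(\varphi)\leq 0\}$, and $m_3 := \inf\{H(\varphi) : \varphi \in H^1\setminus\{0\},\ K(\varphi) < 0\}$. The inequality $m_2 \leq m$ is immediate: on the constraint $\{K = 0\}$, the identity $H = E - K/\bar\mu = E - K/6$ from Lemma \ref{structure:J} gives $H(\varphi) = E(\varphi)$, so every competitor for $m$ is a competitor of the same value for $m_2$.

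The main step is $m \leq m_2$. Given $\varphi \in H^1\setminus\{0\}$ with $K(\varphi) \leq 0$, I would exploit the one-parameter group structure $(\varphi^{\lambda}_{3,-2})^{\mu}_{3,-2} = \varphi^{\lambda+\mu}_{3,-2}$ to obtain
\begin{align*}
K(\varphi^{\lambda}_{3,-2}) = 2 e^{4\lambda}\|\nabla\varphi\|_{L^2}^2 - 2 e^{12\lambda}\|\varphi\|_{L^6}^6 + \tfrac{3}{2}e^{6\lambda}\|\varphi\|_{L^4}^4.
\end{align*}
By Lemma \ref{positive near origin:KQ } and Lemma \ref{Postivity:K}, $K(\varphi^{\lambda}_{3,-2}) > 0$ for all sufficiently negative $\lambda$, while $K(\varphi^{0}_{3,-2}) = K(\varphi) \leq 0$. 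The intermediate value theorem then produces some $\lambda_0 \in (-\infty, 0]$ with $K(\varphi^{\lambda_0}_{3,-2}) = 0$. Since
\begin{align*}
H(\varphi^{\lambda}_{3,-2}) = \tfrac{e^{4\lambda}}{6}\|\nabla\varphi\|_{L^2}^2 + \tfrac{e^{12\lambda}}{6}\|\varphi\|_{L^6}^6
\end{align*}
is strictly increasing in $\lambda$, and $E = H$ at the zero-set of $K$, I conclude
\begin{align*}
m \leq E(\varphi^{\lambda_0}_{3,-2}) = H(\varphi^{\lambda_0}_{3,-2}) \leq H(\varphi).
\end{align*}
Taking the infimum over admissible $\varphi$ yields $m \leq m_2$, hence $m = m_2$.

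For the second equality \eqref{JEqualH}, trivially $m_2 \leq m_3$. For the reverse, I would handle the boundary case $K(\varphi) = 0$ by perturbation: using $K(\varphi) = 0$ to eliminate $\|\varphi\|_{L^6}^6$ in $\frac{d}{d\lambda}\big|_{\lambda=0} K(\varphi^{\lambda}_{3,-2}) = 8\|\nabla\varphi\|_{L^2}^2 - 24\|\varphi\|_{L^6}^6 + 9\|\varphi\|_{L^4}^4$, the derivative reduces to a strictly negative expression $-16\|\nabla\varphi\|_{L^2}^2 - 9\|\varphi\|_{L^4}^4$. Thus $K(\varphi^{\lambda}_{3,-2}) < 0$ for every sufficiently small $\lambda > 0$, while $H(\varphi^{\lambda}_{3,-2}) \to H(\varphi)$ as $\lambda \to 0^+$. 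This yields $m_3 \leq H(\varphi)$ for every competitor of $m_2$, hence $m_3 \leq m_2$, and therefore equality throughout.

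The argument is essentially bookkeeping around the scaling; the only subtlety is making sure the IVT step delivers $\lambda_0 \leq 0$ rather than merely $\lambda_0 \in \R$, which is what allows monotonicity of $H$ to be used in the direction that produces the sharp bound. The transversality calculation at $K = 0$, which eliminates $\|\varphi\|_{L^6}^6$ via the constraint to expose a strictly negative derivative, is the only computational point where one must be slightly careful.
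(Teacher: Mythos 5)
Your proof is correct and follows essentially the same route as the paper: the identity $H=E-\tfrac16K$ on $\{K=0\}$, the rescaling to negative $\lambda$ via Lemmas \ref{positive near origin:KQ } and \ref{Postivity:K} combined with the monotonicity of $H(\varphi^{\lambda}_{3,-2})$ in $\lambda$, and the boundary perturbation at $K=0$ using the strict negativity of $\frac{d}{d\lambda}K(\varphi^{\lambda}_{3,-2})$ (your $-16\|\nabla\varphi\|_{L^2}^2-9\|\varphi\|_{L^4}^4$ is exactly the paper's $\LLL K$ evaluated on the constraint). The only difference is organizational — you prove $m\leq\inf\{H:K\leq 0\}$ directly, while the paper goes through $\inf\{H:K<0\}$ first — and both versions are sound.
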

\begin{proof} For any $ \varphi\in H^1$, $\varphi \not=0$ with $K(\varphi)=0$, we have
$E(\varphi)=H(\varphi)$, this implies that
\begin{align}
 m= & \inf \{ E(\varphi)\; |\; \varphi \in H^1(\R^3),\; \varphi\not=0,\;
K(\varphi)=0 \}  \nonumber\\
\geq & \inf \{ H(\varphi)\; |\; \varphi \in
H^1(\R^3), \; \varphi\not=0, \; K(\varphi) \leq 0 \}.\label{JLargeH}
\end{align}

On the other hand, for any $\varphi \in H^1$, $\varphi\not=0$ with
$K(\varphi)<0$, by Lemma \ref{positive near origin:KQ }, Lemma \ref{Postivity:K}
and the continuity of $K$ in $\lambda$, we know that there
exists a $\lambda_0<0$ such that
\begin{align*}
K(\varphi^{\lambda_0}_{3,-2})=0,
\end{align*}
then by $\LLL H \geq 0$, we have
\begin{align*}
E(\varphi^{\lambda_0}_{3,-2}) = H (\varphi^{\lambda_0}_{3,-2}) \leq
H(\varphi^{0}_{3,-2})=H(\varphi).
\end{align*}
Therefore,
\begin{align}
&  \inf \{ E(\varphi)\; |\; \varphi \in H^1(\R^3), \; \varphi\not=0, \;
K(\varphi)=0 \}  \nonumber\\
 & \leq \inf \{ H(\varphi)\; |\;  \varphi \in
H^1(\R^3),\; \varphi\not=0, \; K(\varphi) < 0 \}.\label{JSmallH}
\end{align}
By \eqref{JLargeH} and \eqref{JSmallH}, we have
\begin{align*}
   & \inf  \{ H(\varphi)\; |\;  \varphi \in H^1(\R^3), \; \varphi\not =0,\;
K(\varphi) \leq 0  \} \\
 &\leq  m
\leq
  \inf \{ H(\varphi)\; |\;\varphi \in H^1(\R^3), \; \varphi\not =0, \;
K(\varphi)< 0 \}.
\end{align*}

In order to show \eqref{JEqualH}, it suffices to show that
\begin{align}
&\inf \{ H(\varphi)\; |\;  \varphi \in H^1(\R^3),\; \varphi\not=0, \; K(\varphi)
\leq 0 \} \nonumber\\
& \geq \inf \{ H(\varphi)\; |\; \varphi \in H^1(\R^3),\; \varphi\not=0,
\; K(\varphi)< 0 \}. \label{minimization:H:larger}
\end{align}
For any $\varphi \in H^1$, $\varphi\not=0$ with $K(\varphi)\leq 0$. By Lemma
\ref{structure:J}, we know that
\begin{align*}
\LLL K(\varphi)= \bar{\mu}K(\varphi)- \int_{\R^3}\left(4\big|\nabla
\varphi\big|^2 + 12\big|\varphi\big|^6 \right) \; dx <0,
\end{align*}
then for any $\lambda>0$ we have
\begin{align*}
K(\varphi^{\lambda}_{3,-2})<0,
\end{align*}
and as $\lambda\rightarrow 0$
\begin{align*}
H(\varphi^{\lambda}_{3,-2})=\int_{\R^3} \left(\frac{e^{4\lambda}}6
\big|\nabla \varphi \big|^2  + \frac{e^{12\lambda}}6  \big|\varphi\big|^6 \right) \; dx
\longrightarrow H(\varphi).
\end{align*}
This shows \eqref{minimization:H:larger}, and completes the proof.
\end{proof}

Next we will use the ($\dot H^1$-invariant) scaling argument to remove the $L^4$ term (the lower regularity quantity than $\dot H^1$) in $K$,
that is, to replace  the constrained
condition $K(\varphi) < 0$ with $K^c(\varphi) < 0$, where
\begin{align*}
K^{c}(\varphi):= \int_{\R^3} \left( 2 |\nabla \varphi|^2
-2|\varphi|^6 \right)\; dx.
\end{align*}

In fact, we have

\begin{lemma}\label{minimization:Hc}For the minimization $m$ in \eqref{minimization}, we
have
\begin{align*} m =& \inf \{ H(\varphi)\; |\; \varphi \in
H^1(\R^3),\; \varphi\not=0, \;
K^c(\varphi) < 0 \}\\
 =& \inf \{ H(\varphi)\; |\; \varphi \in   H^1(\R^3),\; \varphi\not =0, \;
K^c(\varphi) \leq  0 \}.
\end{align*}
\end{lemma}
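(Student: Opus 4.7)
The plan is to prove the two equalities by showing the chain of inequalities
\[
m = \inf\{H:K\le 0\} \;\ge\; \inf\{H:K^c\le 0\} \;=\; \inf\{H:K^c< 0\} \;\ge\; m.
\]

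First I would observe the pointwise identity $K(\varphi)=K^c(\varphi)+\tfrac{3}{2}\|\varphi\|_{L^4}^4$, which gives $K^c(\varphi)\le K(\varphi)$. Hence $\{K\le 0\}\subset\{K^c\le 0\}$, so taking infima of $H$ over these sets yields $\inf\{H:K^c\le 0\}\le m$ at once, using Lemma \ref{minimization:H}.

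Next, for the lower bound $\inf\{H:K^c<0\}\ge m$, I would use the $\dot H^1$-invariant scaling $\psi_\lambda(x):=\lambda^{1/2}\varphi(\lambda x)$, which leaves both $\|\nabla\varphi\|_{L^2}^2$ and $\|\varphi\|_{L^6}^6$ invariant (hence $H(\psi_\lambda)=H(\varphi)$ and $K^c(\psi_\lambda)=K^c(\varphi)$) but satisfies $\|\psi_\lambda\|_{L^4}^4=\lambda^{-1}\|\varphi\|_{L^4}^4$. Therefore
\[
K(\psi_\lambda)=K^c(\varphi)+\tfrac{3}{2}\lambda^{-1}\|\varphi\|_{L^4}^4,
\]
and if $K^c(\varphi)<0$, this is still negative once $\lambda$ is large enough. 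Then Lemma \ref{minimization:H} gives $H(\varphi)=H(\psi_\lambda)\ge m$, so $\inf\{H:K^c<0\}\ge m$.

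Finally, to bridge ``$\le 0$'' and ``$<0$,'' I would apply the amplitude scaling $\varphi\mapsto\mu\varphi$ with $\mu>1$. If $\varphi\ne 0$ and $K^c(\varphi)\le 0$, a direct computation using $\mu^6>\mu^2$ and $\|\varphi\|_{L^6}^6>0$ shows
\[
K^c(\mu\varphi)=2\mu^2\|\nabla\varphi\|_{L^2}^2-2\mu^6\|\varphi\|_{L^6}^6<\mu^2 K^c(\varphi)\le 0,
\]
while $H(\mu\varphi)=\tfrac{\mu^2}{6}\|\nabla\varphi\|_{L^2}^2+\tfrac{\mu^6}{6}\|\varphi\|_{L^6}^6\to H(\varphi)$ as $\mu\to 1^+$. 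This shows $\inf\{H:K^c\le 0\}\ge\inf\{H:K^c<0\}$, and the opposite inequality is immediate from set inclusion. Combining all four estimates closes the chain and establishes the lemma.

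The proof is essentially algebraic once the correct scaling is chosen; the only subtle point is the separate treatment of the strict and non-strict constraints, handled by the amplitude rescaling. The heart of the argument is recognizing that the $L^4$-term in $K$ is a ``lower-order'' perturbation with respect to the $\dot H^1$-invariant scaling, which is why it can be removed at no cost in the functional $H$.
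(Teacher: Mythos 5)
Your proposal is correct and follows essentially the same route as the paper: the inequality $K^c\le K$ gives one direction, the $\dot H^1$-invariant rescaling (which leaves $H$ and $K^c$ unchanged while shrinking the $L^4$ term) gives $\inf\{H:K^c<0\}\ge m$, and a small deformation passes from $K^c\le 0$ to $K^c<0$ with $H$ converging back. The only cosmetic difference is that you bridge the strict and non-strict constraints by the amplitude scaling $\mu\varphi$, whereas the paper uses the $L^2$-invariant scaling $\varphi^{\lambda}_{3,-2}$; both exploit the same mechanism (the $L^6$ term scales with a higher power than the gradient term), so the arguments are interchangeable.
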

\begin{proof}Since
$
K^c(\varphi) \leq K(\varphi)$, it is obvious that
\begin{align*}
m  = & \inf \{ H(\varphi)\; |\;  \varphi \in H^1(\R^3),\; \varphi\not=0, \;
K(\varphi)< 0 \}  \\
\geq & \inf \{ H(\varphi)\; |\; \varphi \in
H^1(\R^3),\; \varphi\not =0, \; K^c(\varphi) < 0 \}.
\end{align*}
Hence in order to show the first equality, it suffices to show that
\begin{align}
& \inf \{ H(\varphi)\; |\; \varphi \in H^1(\R^3),\; \varphi\not=0,  \; K(\varphi)<
0 \} \nonumber\\
&  \leq \inf \{ H(\varphi)\; |\;  \varphi \in   H^1(\R^3),\; \varphi\not=0,
\; K^c(\varphi) < 0 \}.\label{minimization:Hc:smaller}
\end{align}
To do so, for any $ \varphi \in H^1$, $\varphi\not=0$ with $ K^c(\varphi) < 0
$, taking
\begin{align*}
\varphi^{\lambda}_{1,-2}(x)=e^{\lambda}\varphi(e^{2\lambda}x),
\end{align*}
we have $  \varphi^{\lambda}_{1,-2} \in H^1$ and $ \varphi^{\lambda}_{1,-2}\not=0$ for any
$\lambda>0$. In addition,   we have
\begin{align*}
K(\varphi^{\lambda}_{1,-2})=  \int_{\R^3} \left( 2\big|\nabla \varphi\big|^2
- 2\big|\varphi\big|^6 + \frac32 e^{-2\lambda} \big|\varphi\big|^4 \right) \; dx & \longrightarrow K^c(\varphi),\\
H(\varphi^{\lambda}_{1,-2})= \int_{\R^3} \left( \frac16 \big|\nabla
\varphi\big|^2 + \frac16
\big|\varphi\big|^6 \right)\; dx = & H(\varphi),
\end{align*}
as $\lambda \rightarrow +\infty$. This gives
\eqref{minimization:Hc:smaller}, and completes the proof of the
first equality.

For the second equality, it is obvious that
\begin{align*}
& \inf \{ H(\varphi)\; |\; \varphi \in   H^1(\R^3),\; \varphi \not=0, \;
K^c(\varphi) < 0 \} \\
 &\geq \inf \{ H(\varphi)\; |\;  \varphi
\in H^1(\R^3),\; \varphi\not=0, \; K^c(\varphi) \leq  0 \},
\end{align*}
hence we only need to show that
\begin{align}
& \inf \{ H(\varphi)\; |\;  \varphi \in   H^1(\R^3),\; \varphi\not=0, \;
K^c(\varphi) < 0 \} \nonumber \\
 & \leq \inf \{ H(\varphi)\; |\;  \varphi
\in H^1(\R^3), \; \varphi\not=0, \; K^c(\varphi) \leq  0 \}.\label{minimization:Hc:smaller II}
\end{align}
To do this, we use the ($L^2$-invariant) scaling argument. For any $  \varphi \in H^1$, $\varphi\not=0$ with $K^c(\varphi)\leq
0$, we have $  \varphi^{\lambda}_{3,-2}\in H^1$, $\varphi^{\lambda}_{3,-2}\not=0$. In addition, by
\begin{align*}
\LLL K^c (\varphi) = & \int_{\R^3} \left(8\big|\nabla \varphi
\big|^2-24 \big|\varphi \big|^6 \right)\; dx = 4 K^{c}(\varphi)
-16\big\|\varphi\big\|^6_{L^6}<0,\\
& H(\varphi^{\lambda}_{3,-2}) =  \int_{\R^3} \left(
\frac{e^{4\lambda}}{6}
 \big|\nabla \varphi \big|^2 + \frac{e^{12\lambda}}{6}
\big|\varphi\big|^6\right) \; dx,
\end{align*}
we have $K^c(\varphi^{\lambda}_{3,-2})<0$ for any $\lambda>0$, and
\begin{align*}
H(\varphi^{\lambda}_{3,-2}) \rightarrow H(\varphi),\;\;
\text{as}\;\; \lambda \rightarrow 0.
\end{align*}
This implies \eqref{minimization:Hc:smaller II} and completes the
proof.
\end{proof}

After these preparations, we can now make use of the sharp Sobolev constant in \cite{Aubin:Sharp contant:Sobolev, Talenti:best constant}
to compute the minimization $m$ of \eqref{minimization}, which also shows Proposition \ref{threshold-energy}.

\begin{lemma}\label{threshold} For the minimization $m$ in \eqref{minimization}, we
have
\begin{align*}
m=E^c(W).
\end{align*}
\end{lemma}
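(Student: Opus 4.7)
The plan is to invoke Lemma \ref{minimization:Hc}, which rewrites
\[ m = \inf\{ H(\varphi) \; | \; \varphi \in H^1(\R^3)\setminus\{0\},\ K^c(\varphi) \leq 0\}, \]
and then to identify this infimum with $E^c(W)$ via the sharp Sobolev embedding $\dot H^1(\R^3) \hookrightarrow L^6(\R^3)$, for which $W$ is (up to symmetries) the unique extremizer. From the ground-state equation $-\Delta W = |W|^4 W$ one obtains the Pohozaev identity $\|\nabla W\|_{L^2}^2 = \|W\|_{L^6}^6$, so that $E^c(W) = \tfrac{1}{3}\|\nabla W\|_{L^2}^2$, and the sharp Sobolev inequality assumes the convenient form $\|\varphi\|_{L^6}^6 \leq \|\nabla W\|_{L^2}^{-4}\|\nabla\varphi\|_{L^2}^6$ for every $\varphi\in\dot H^1$.

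For the lower bound $m \geq E^c(W)$, I would take any $\varphi \in H^1\setminus\{0\}$ with $K^c(\varphi) \leq 0$, i.e. $\|\nabla\varphi\|_{L^2}^2 \leq \|\varphi\|_{L^6}^6$. Inserting the sharp Sobolev bound forces $\|\nabla\varphi\|_{L^2}^4 \geq \|\nabla W\|_{L^2}^4$, hence $\|\nabla\varphi\|_{L^2}^2 \geq \|\nabla W\|_{L^2}^2$; plugging this back gives $\|\varphi\|_{L^6}^6 \geq \|\nabla W\|_{L^2}^2 = \|W\|_{L^6}^6$. Therefore
\[ H(\varphi) = \tfrac{1}{6}\left(\|\nabla\varphi\|_{L^2}^2 + \|\varphi\|_{L^6}^6\right) \geq \tfrac{1}{3}\|\nabla W\|_{L^2}^2 = E^c(W). \]

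For the matching upper bound, I would build a minimizing sequence by truncating $W$ (which itself lies only in $\dot H^1$, not in $H^1$). Let $\chi_R$ be a smooth radial cutoff equal to $1$ on $|x|\leq R$ and $0$ on $|x|\geq 2R$, and set $W_R := \chi_R W \in H^1(\R^3)$. A direct computation along the lines of the one already carried out in Remark \ref{rem:noempty} yields $\|\nabla W_R\|_{L^2}^2 \to \|\nabla W\|_{L^2}^2$ and $\|W_R\|_{L^6}^6 \to \|W\|_{L^6}^6$ as $R \to \infty$. Choose a rescaling $\mu_R := \max\!\bigl(1, (\|\nabla W_R\|_{L^2}^2/\|W_R\|_{L^6}^6)^{1/4}\bigr)$, so that $K^c(\mu_R W_R) \leq 0$ and $\mu_R \to 1$. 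Then $\mu_R W_R$ is admissible in Lemma \ref{minimization:Hc}, and
\[ H(\mu_R W_R) = \tfrac{\mu_R^2}{6}\|\nabla W_R\|_{L^2}^2 + \tfrac{\mu_R^6}{6}\|W_R\|_{L^6}^6 \longrightarrow \tfrac{1}{3}\|\nabla W\|_{L^2}^2 = E^c(W). \]

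The principal obstacle is the upper bound: $W \notin L^2(\R^3)$, so $W$ is not admissible, and since the Sobolev extremum is attained only on the scaling orbit of $W$, no genuine $H^1$-minimizer can exist — this is precisely the non-existence statement also asserted in Proposition \ref{threshold-energy}. The construction therefore must couple the $\dot H^1$-compatible truncation with a small scalar rescaling to remain inside $\{K^c \leq 0\}$, and one has to check that both the cutoff corrections and the deviation $\mu_R-1$ vanish in the limit.
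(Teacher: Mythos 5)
Your proposal is correct and takes essentially the same route as the paper: both reduce the problem via Lemma \ref{minimization:Hc} to the $K^c$-constrained minimization of $H$ and then identify the value with the sharp Sobolev constant, whose extremizer is $W$, giving $m=\tfrac13\|\nabla W\|_{L^2}^2=E^c(W)$. The only difference is presentational: the paper handles both bounds at once through the scale-invariant quotient $\|\nabla\varphi\|_{L^2}/\|\varphi\|_{L^6}$ together with the density of $H^1$ in $\dot H^1$, while you split the two bounds, proving the lower bound by deducing $\|\nabla\varphi\|_{L^2}\geq\|\nabla W\|_{L^2}$ from the constraint and the upper bound by the explicit truncation-plus-rescaling sequence (in the spirit of Remark \ref{rem:noempty}), which makes the non-attainment and the approximation step more explicit.
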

\begin{proof} By Lemma \ref{minimization:Hc}, we have
\begin{align*}
m= & \inf \left\{\frac16 \int_{\R^3} \left(|\nabla \varphi|^2 +
|\varphi|^6\right)\; dx \; \Big| \;  \varphi \in H^1,\; \varphi\not=0,\;
\big\|\nabla \varphi \big\|^2_{L^2} \leq
\big\|\varphi\big\|^6_{L^6}   \right\}
\\
 \geq & \inf \left\{ \int_{\R^3} \frac16 \left(|\nabla \varphi|^2 +
|\varphi|^6\right) + \frac16 \left(|\nabla \varphi|^2-
|\varphi|^6\right)\; dx \; \Big|\;  \varphi \in H^1,\; \varphi\not=0,\;
\big\|\nabla \varphi \big\|^2_{L^2} \leq
\big\|\varphi\big\|^6_{L^6}  \right\}
\end{align*}
where the equality holds if and only if the minimization is taken
by some $\varphi$ with $\big\|\nabla \varphi \big\|^2_{L^2} =
\big\|\varphi\big\|^6_{L^6}$. While
\begin{align*}
& \inf\left\{ \int_{\R^3} \frac13  |\nabla \varphi|^2  \; dx \;
\big| \;  \varphi \in H^1,\;\varphi\not=0,\; \big\|\nabla \varphi
\big\|^2_{L^2} \leq
\big\|\varphi\big\|^6_{L^6}\right\} \\
&  = \inf\left\{ \frac13 \big\|\nabla
\varphi\big\|^2_{L^2} \left(
\frac{\big\|\nabla\varphi\big\|^2_{L^2}}{\big\|\varphi\big\|^6_{L^6}}\right)^{1/2}\;
\Big| \;   \varphi \in H^1, \; \varphi\not=0 \right\}\\
 & =\inf\left\{ \frac13   \left(
\frac{\big\|\nabla\varphi\big\|_{L^2}}{\big\|\varphi\big\|_{L^6}}\right)^{3}\;
\Big| \;  \varphi \in H^1, \; \varphi\not =0 \right\}\\
& = \inf\left\{ \frac13   \left(
\frac{\big\|\nabla\varphi\big\|_{L^2}}{\big\|\varphi\big\|_{L^6}}\right)^{3}\;
\Big| \;  \varphi \in \dot H^1,\; \varphi\not=0\right\}= \frac13
\big(C^*_3\big)^{-3}.
\end{align*}
where we use the density property $H^1\hookrightarrow \dot H^1$ in the last second equality
and that $C^*_3$ is the sharp Sobolev constant in $\R^3$, that is,
\begin{align*}
\big\|\varphi\big\|_{L^6_x}\leq C^*_3 \big\|\nabla \varphi
\big\|_{L^2_x},  \;\;\forall \; \varphi\in \dot H^1(\R^3),
\end{align*}
and the equality can be attained by the ground state $W$ of the following
elliptic equation \begin{align*} -\Delta W = |W|^4W.
\end{align*}
This implies that $\frac13 \big(C^*_3\big)^{-3}= E^c(W)$. The proof
is completed.
\end{proof}

After the computation of the minimization $m$ in
\eqref{minimization}, we next give some variational estimates.

\begin{lemma} \label{free-energ-equiva} For any $\varphi \in H^1$ with $K(\varphi)\geq 0$, we
have
\begin{align}\label{free energy}
\int_{\R^3} \left(\frac16\big|\nabla \varphi \big|^2  + \frac16 \big| \varphi\big|^6\right)  dx \leq
E(\varphi) \leq \int_{\R^3} \left(\frac12\big|\nabla \varphi \big|^2
  + \frac14 \big| \varphi\big|^4
\right) dx.
\end{align}
\end{lemma}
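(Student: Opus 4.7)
The proof is essentially immediate once one combines the direct definition of $E(\varphi)$ with the identity already established in Lemma \ref{structure:J}, so the plan is short.

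\medskip

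For the upper bound, I would simply drop the (negative) focusing term: since $-\tfrac{1}{6}|\varphi|^6 \leq 0$ pointwise, the definition
\[
E(\varphi) = \int_{\R^3} \left( \tfrac{1}{2}|\nabla\varphi|^2 - \tfrac{1}{6}|\varphi|^6 + \tfrac{1}{4}|\varphi|^4 \right) dx
\]
yields $E(\varphi) \leq \int \bigl( \tfrac{1}{2}|\nabla\varphi|^2 + \tfrac{1}{4}|\varphi|^4\bigr)\,dx$ without any use of the hypothesis $K(\varphi)\geq 0$.

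\medskip

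For the lower bound, the key is to invoke the first identity in Lemma \ref{structure:J}, which (with $\bar\mu = 6$) reads
\[
6 E(\varphi) - K(\varphi) = \int_{\R^3} \left( |\nabla\varphi|^2 + |\varphi|^6 \right) dx.
\]
Rearranging gives $E(\varphi) = \tfrac{1}{6}K(\varphi) + \tfrac{1}{6}\int\bigl(|\nabla\varphi|^2 + |\varphi|^6\bigr)dx$, and the hypothesis $K(\varphi) \geq 0$ immediately produces the claimed lower bound $E(\varphi) \geq \int \bigl( \tfrac{1}{6}|\nabla\varphi|^2 + \tfrac{1}{6}|\varphi|^6 \bigr) dx$.

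\medskip

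There is no real obstacle: both inequalities reduce to algebraic manipulations of the two functionals $E$ and $K$, combined with the sign hypothesis on $K$ for the lower bound and the sign of the critical nonlinear term for the upper bound. The only thing to be careful about is correctly identifying the combination $6E - K$ from Lemma \ref{structure:J}, which is precisely the positive definite quantity $\int(|\nabla\varphi|^2+|\varphi|^6)\,dx$ needed to extract the free-energy type lower bound.
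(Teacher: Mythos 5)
Your proof is correct and follows essentially the same route as the paper: the upper bound by dropping the nonpositive term $-\tfrac16|\varphi|^6$, and the lower bound from the identity $E(\varphi)=\tfrac16 K(\varphi)+\tfrac16\int_{\R^3}\bigl(|\nabla\varphi|^2+|\varphi|^6\bigr)\,dx$ (which is exactly the relation $6E-K$ from Lemma \ref{structure:J}) together with $K(\varphi)\geq 0$. No gaps.
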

\begin{proof} On one hand, the right hand side of \eqref{free energy} is trivial.  On the other hand,
by the definition of $E$ and $K$, we have
\begin{align*}
E(\varphi)= \int_{\R^3} \left(\frac16\big|\nabla \varphi \big|^2 +
\frac16 \big| \varphi\big|^6\right)\;
dx + \frac{1}{6}K(\varphi),
\end{align*}
which implies the left hand side of \eqref{free energy}.
\end{proof}

At the last of this section, we give the uniform bounds on the
scaling derivative functional $K(\varphi)$ with the   energy
$E(\varphi)$ below the threshold $m$, which plays an important role for
the blow-up and scattering analysis in Section \ref{S:blow up} and Section \ref{S:GWP-Scattering}.

\begin{lemma}\label{uniform bound}
For any $\varphi \in H^1$ with $E(\varphi)<m$.
\begin{enumerate}
\item If $K(\varphi)<0$, then
\begin{align}\label{uniform:K:negative}
K(\varphi)  \leq -6\big(m-E(\varphi)\big).
\end{align}
\item If $K(\varphi)\geq 0$, then
\begin{align}\label{uniform:K:positive}
K(\varphi)\geq \min\left(6(m-E(\varphi)), \frac{2}{3} \big\|\nabla
\varphi \big\|^2_{L^2} +  \frac12 \big\|\varphi\big\|^4_{L^4}
\right).
\end{align}
\end{enumerate}
\end{lemma}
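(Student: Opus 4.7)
My plan rests on the elementary identity $E(\varphi)=H(\varphi)+\tfrac{1}{6}K(\varphi)$ (immediate from the definitions of $E$, $H$, $K$) together with the $H$-characterization $m=\inf\{H(\varphi):\varphi\neq 0,\ K(\varphi)<0\}$ proved in Lemma~\ref{minimization:H}. For Part~(1) this is essentially one line: the hypothesis $K(\varphi)<0$ forces $\varphi\neq 0$ (since $K(0)=0$), so the minimization characterization gives $H(\varphi)\geq m$, and the identity rearranges to $\tfrac{1}{6}K(\varphi)=E(\varphi)-H(\varphi)\leq E(\varphi)-m$, i.e.\ \eqref{uniform:K:negative}.

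For Part~(2) we may assume $\varphi\neq 0$ (else both sides vanish); moreover $K(\varphi)=0$ would force $E(\varphi)\geq m$ by \eqref{minimization}, contradicting the hypothesis, so $K(\varphi)>0$. Along the $L^2$-invariant scaling $\varphi^{\lambda}:=\varphi^{\lambda}_{3,-2}$ one has $K(\varphi^{\lambda})=2e^{4\lambda}\|\nabla\varphi\|_{L^2}^{2}-2e^{12\lambda}\|\varphi\|_{L^6}^{6}+\tfrac{3}{2}e^{6\lambda}\|\varphi\|_{L^4}^{4}\to -\infty$ as $\lambda\to +\infty$, so there is a smallest $\lambda_*>0$ with $K(\varphi^{\lambda_*})=0$, and the characterization of $m$ yields $E(\varphi^{\lambda_*})\geq m$. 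Solving $K(\varphi^{\lambda_*})=0$ for $\|\varphi\|_{L^6}^{6}$ and substituting back produces the two explicit identities
\begin{align*}
K(\varphi)&=2\|\nabla\varphi\|_{L^2}^{2}\bigl(1-e^{-8\lambda_*}\bigr)+\tfrac{3}{2}\|\varphi\|_{L^4}^{4}\bigl(1-e^{-6\lambda_*}\bigr),\\
6\bigl(E(\varphi^{\lambda_*})-E(\varphi)\bigr)&=\|\nabla\varphi\|_{L^2}^{2}\bigl(2e^{4\lambda_*}-3+e^{-8\lambda_*}\bigr)+\tfrac{3}{4}\|\varphi\|_{L^4}^{4}\bigl(e^{6\lambda_*}-2+e^{-6\lambda_*}\bigr).
\end{align*}

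I would then conclude by a dichotomy on the size of $\lambda_*$. If $\lambda_*\geq \tfrac{1}{6}\ln(3/2)$, both $1-e^{-6\lambda_*}$ and $1-e^{-8\lambda_*}$ exceed $1/3$, so the first identity alone gives $K(\varphi)\geq \tfrac{2}{3}\|\nabla\varphi\|_{L^2}^{2}+\tfrac{1}{2}\|\varphi\|_{L^4}^{4}$, the second term of the min in \eqref{uniform:K:positive}. Otherwise subtracting the two identities yields $K(\varphi)-6(E(\varphi^{\lambda_*})-E(\varphi))=\|\nabla\varphi\|_{L^2}^{2}f(\lambda_*)+\|\varphi\|_{L^4}^{4}g(\lambda_*)$ with $f(\lambda)=5-2e^{4\lambda}-3e^{-8\lambda}$ and $g(\lambda)=3-\tfrac{3}{4}e^{6\lambda}-\tfrac{9}{4}e^{-6\lambda}$; a short calculus check, for instance substituting $y=e^{6\lambda}$ reduces $g\geq 0$ to $(y-1)(y-3)\leq 0$ and an analogous argument in $e^{4\lambda}$ handles $f$, shows both remain nonnegative on $[0,\tfrac{1}{6}\ln(3/2)]$. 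Hence $K(\varphi)\geq 6(E(\varphi^{\lambda_*})-E(\varphi))\geq 6(m-E(\varphi))$, the first term of the min. The main obstacle is not conceptual but computational: one must pick the threshold $\tfrac{1}{6}\ln(3/2)$ precisely so that the two regimes (where the scaling identity and the energy-comparison respectively dominate) exactly tile $\lambda_*\in(0,\infty)$, and then verify the sign of $f$ and $g$ on the complementary range.
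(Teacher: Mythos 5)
Your proof is correct, but it follows a genuinely different route from the paper's, so let me compare. For part (1) the paper does not invoke Lemma~\ref{minimization:H} directly: it sets $j(\lambda)=E(\varphi^{\lambda}_{3,-2})$, finds the first $\lambda_0<0$ with $K(\varphi^{\lambda_0}_{3,-2})=0$ (so $j(\lambda_0)\ge m$), and integrates the differential inequality $j''\le \bar\mu j'$ over $[\lambda_0,0]$; your one-line argument via $E=H+\tfrac16K$ and $H(\varphi)\ge m$ when $K(\varphi)<0$ is shorter and is in fact the same mechanism the paper later uses in the blow-up section. For part (2) the paper's dichotomy is on the quantity $2\bar\mu K(\varphi)$ versus $12\|\varphi\|_{L^6}^6$: in the first regime the bound $K\ge\tfrac23\|\nabla\varphi\|_{L^2}^2+\tfrac12\|\varphi\|_{L^4}^4$ drops out algebraically from the definition of $K$, while in the second regime the ODE-type inequality $j''\le-\bar\mu j'$, shown to persist up to the first positive zero $\lambda_0$ of $j'$, is integrated to give $K\ge 6(m-E)$. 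You instead move along the same scaling to the first zero $\lambda_*$ of $K$, eliminate $\|\varphi\|_{L^6}^6$ by the constraint $K(\varphi^{\lambda_*})=0$, and split on the size of $\lambda_*$ with threshold $\tfrac16\ln(3/2)$; your two identities and the conclusions in each regime check out, and the sign verification of $f$ on $[0,\tfrac16\ln(3/2)]$ indeed works (with $z=e^{4\lambda}$ it reduces to $(z-1)(2z^2-3z-3)\le 0$, valid since $1\le z\le(3/2)^{2/3}<\tfrac{3+\sqrt{33}}{4}$), although you leave that cubic computation implicit. The trade-off: the paper's argument avoids explicit constant-chasing but needs the slightly delicate continuity/persistence argument for the differential inequality up to $\lambda_0$; yours is purely algebraic and self-contained at the price of the explicit functions $f,g$ and the exact choice of the threshold $\tfrac16\ln(3/2)$. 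Both ultimately rest on the same key input, namely that $E(\varphi^{\lambda}_{3,-2})\ge m$ at a zero of $K$ along the scaling flow, i.e.\ on \eqref{minimization} (equivalently Lemma~\ref{minimization:H}).
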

\begin{proof} By Lemma \ref{structure:J}, for any $\varphi \in H^1$, we have
\begin{align*}
\LLL^2 E(\varphi) = \bar{\mu} \LLL E(\varphi)- 4\big\|\nabla \varphi\big\|^2_{L^2} - 12
\big\|\varphi\big\|^6_{L^6}.
\end{align*}
Let $j(\lambda)=E(\varphi^{\lambda}_{3,-2})$, then we have
\begin{align}\label{diff J}
j''(\lambda) =  \bar{\mu}j'(\lambda) - 4 e^{4\lambda}\big\|\nabla \varphi\big\|^2_{L^2} -
12e^{12\lambda}\big\|\varphi\big\|^6_{L^6}.
\end{align}

\noindent{\bf Case I:} If $K(\varphi)<0$, then by
\eqref{asymptotic:KQ}, Lemma \ref{Postivity:K} and the continuity of $K$ in
$\lambda$, there exists a negative number
$\lambda_0<0$ such that $K(\varphi^{\lambda_0}_{3,-2})=  0$, and
\begin{align*}
K(\varphi^{\lambda}_{3,-2})<   0, \;\; \forall\; \; \lambda\in
(\lambda_0, 0).
\end{align*}
By \eqref{minimization}, we obtain $j(\lambda_0)=E(\varphi^{\lambda_0}_{3,-2})
\geq m$. Now by integrating \eqref{diff J} over
$[\lambda_0, 0]$, we have
\begin{align*}
\int^0_{\lambda_0} j''(\lambda)\; d\lambda \leq \bar{\mu}
\int^0_{\lambda_0} j'(\lambda)\; d\lambda,
\end{align*}
which implies that
\begin{align*}
K(\varphi)=j'(0)-j'(\lambda_0)\leq
\bar{\mu}\left(j(0)-j(\lambda_0)\right)  \leq -\bar{\mu}
\big(m-E(\varphi)\big),
\end{align*}
which implies \eqref{uniform:K:negative}.

\noindent{\bf Case II:} $K(\varphi) \geq 0$. We divide it
into two subcases:

When $2\bar{\mu} K(\varphi)\geq 12 \big\|\varphi\big\|^6_{L^6}$.
Since
\begin{align*}
12 \int_{\R^3}\big|\varphi\big|^6 \; dx = -6K(\varphi) + \int_{\R^3}
\left(12 \big| \nabla \varphi \big|^2 + 9 \big|  \varphi \big|^4
\right) \; dx,
\end{align*}
then we have
\begin{align*}
2\bar{\mu} K(\varphi)\geq -6K(\varphi) + \int_{\R^3} \left(12 \big|
\nabla \varphi \big|^2 + 9 \big|  \varphi \big|^4 \right) \; dx,
\end{align*}
which implies that
\begin{align*}
 K(\varphi)\geq \frac{2}{3} \big\|\nabla
\varphi \big\|^2_{L^2} +  \frac12 \big\|\varphi\big\|^4_{L^4}.
\end{align*}

When $2\bar{\mu}  K(\varphi) \leq 12 \big\|\varphi\big\|^6_{L^6}$.
By \eqref{diff J}, we have for $\lambda=0$
\begin{align}
0< &  2 \bar{\mu}j'(\lambda) <
12e^{12\lambda}\big\|\varphi\big\|^6_{L^6}, \nonumber\\
j''(\lambda) =
\bar{\mu}j'(\lambda) -& 4 e^{4\lambda}\big\|\nabla \varphi\big\|^2_{L^2}- 12e^{12\lambda}\big\|\varphi\big\|^6_{L^6}
\leq -\bar{\mu}j'(\lambda). \label{evolution j}
\end{align}
By the continuity of $j'$ and $j''$ in $\lambda$, we know that $j'$ is an accelerating decreasing function as $\lambda$ increases until $j'(\lambda_0)=0$ for some
finite number $\lambda_0>0$ and \eqref{evolution j} holds on $[0,
\lambda_0]$.

By
$
K(\varphi^{\lambda_0}_{3,-2})=j'(\lambda_0)=0,
$
we know that
\begin{align*}
E(\varphi^{\lambda_0}_{3,-2})\geq m.
\end{align*}
Now integrating \eqref{evolution j} over $[0, \lambda_0]$, we obtain that
\begin{align*}
-K(\varphi)=j'(\lambda_0)-j'(0) \leq -\bar{\mu} \big(j(\lambda_0)-j(0)\big)
\leq -\bar{\mu} (m-E(\varphi)).
\end{align*}
This completes the proof.
\end{proof}

%
%
%
%

\section{Part I: Blow up for $\KKK^-$}\label{S:blow up} In this section, we prove the
blow-up result of Theorem \ref{theorem}. We can also refer to \cite{OgaTsu:Blowup:NLS:91}.
Now let $\phi$ be a smooth, radial function satisfying $ \partial^2_r \phi(r) \leq 2$, $\phi(r)=r^2$ for $r\leq 1$, and $\phi(r)$ is constant for $r\geq 3$.
For some $R$, we define
\begin{align*}
V_R(t):=\int_{\R^3} \phi_R(x) |u(t,x)|^2\; dx, \quad
\phi_R(x)=R^2\phi\left(\frac{|x| }{R }\right).
\end{align*}

By Lemma \ref{L:virial},
$
\Delta\phi_R(r)=6 $  for $ r\leq R,$ and $\Delta^2 \phi_R(r)=0 $ for $  r\leq R,
$
we have
\begin{align*}
\partial^2_t V_R(t)
= &\; 4  \int_{\R^3} \phi_R''(r) \big|\nabla u(t,x)\big|^2 \;dx -\int_{\R^3} (\Delta^2 \phi_R)(x) |u(t,x)|^2 \; dx  \\
&  - \frac43 \int_{\R^3} (\Delta \phi_R)  |u(t,x)|^6\; dx +
\int_{\R^3} (\Delta \phi_R)  |u(t,x)|^4\; dx\\
\leq &\; 4 \int_{\R^3} \left( 2 |\nabla u (t)|^2 -2|u (t)|^6 +\frac32
|u (t)|^4 \right)\; dx \\
 & +  \frac{c}{R^2}\int_{R\leq |x|\leq 3R} \big| u (t) \big|^2 \; dx + c \int_{R\leq |x|\leq 3R} \left( \big| u (t) \big|^4
 + \big| u(t) \big|^6 \right) \; dx.
\end{align*}

By the Gagliardo-Nirenberg  and radial Sobolev inequalities, we have
\begin{align*}
\big\|f\big\|^4_{L^4(|x|\geq R)} \leq & \frac{c}{R^2} \big\|f\big\|^3_{L^2(|x|\geq R)}\big\|\nabla f\big\|_{L^2(|x|\geq R)},\\
\big\|f\big\|_{L^\infty(|x|\geq R)} \leq & \frac{c}{R} \big\|f\big\|^{1/2}_{L^2(|x|\geq R)}\big\|\nabla f\big\|^{1/2}_{L^2(|x|\geq R)}.
\end{align*}
Therefore, by mass conservation and Young's inequality, we know that for any $\epsilon>0$ there exist sufficiently large $R$ such that
\begin{align}
\partial^2_t V_R(t) \leq & 4 K(u(t))
+ \epsilon \big\|\nabla u(t,x)\big\|^2_{L^2 } + \epsilon^2. \nonumber\\
= & 48 E(u) - \big(16-\epsilon\big)\big\|\nabla u(t)\big\|^2_{L^2} - 6\big\|u(t)\big\|^4_{L^4} + \epsilon^2\label{videntity:second der}
\end{align}
By $K(u)<0$, mass and energy conservations, Lemma \ref{uniform bound} and
the continuity argument, we know that for any $t\in I$, we have
\begin{align*}
K(u(t)) \leq -6\left(m-E(u(t))\right)<0.
\end{align*}
By Lemma \ref{minimization:H}, we have
\begin{align*}
m \leq H(u(t))< \frac13 \big\|u(t)\big\|^6_{L^6}.
\end{align*}
where we have used the fact that $K(u(t))<0$ in the second inequality. By the fact $m=\frac13 \left(C^*_3\right)^{-3}$ and the Sharp Sobolev inequality, we have
\begin{align*}
\big\|\nabla u(t)\big\|^6_{L^2} \geq \left(C^*_3\right)^{-6} \big\|u(t)\big\|^6_{L^6} > \left(C^*_3\right)^{-9},
\end{align*}
which implies that $\big\|\nabla u(t)\big\|^2_{L^2} > 3m$.

In addition, by $E(u_0)<m$ and energy conservation, there exists $\delta_1>0$ such that
$E(u(t))\leq (1-\delta_1)m$. Thus, if we choose $\epsilon$ sufficiently small, we have
\begin{align*}
\partial^2_t V_R(t) \leq 48 (1-\delta_1)m - 3\big(16-\epsilon\big) m +   \epsilon^2 \leq -24 \delta_1 m,
\end{align*}
which implies that $u$ must blow up at finite time. \qed

%
%
%
%

\section{Perturbation theory}\label{S:long time perturbation}
In this part, we give the perturbation theory of the
solution of \eqref{NLS} with the global space-time estimate. First we
denote the space-time space $ST(I)$ on the time interval $I$ by
\begin{align*}
ST(I):=   \left(L^{10}_t \dot B^{1/3}_{90/19, 2} \cap L^{12}_tL^9_x \cap L^6_t\dot B^{1/2}_{18/7,2}\cap
L^{5}_{t,x}\right)&(I\times \R^3),\\
ST^*(I):=   \left( L^2_t \dot B^{1/3}_{18/11,2}  \cap
L^2_t\dot B^{1/2}_{6/5,2}\right)(I\times \R^3)&.
\end{align*}

The main result in this section is the following.
\begin{proposition}\label{stability}
Let $I$ be a compact time interval and let $w$ be an approximate
solution to \eqref{NLS} on $I\times \R^3$ in the sense that
\begin{align*}
i\partial_t w + \Delta w= - |w|^{4} w+|w|^2 w +e
\end{align*}
for some suitable small function $e$. Assume that for some constants $L, E_0>0$, we
have
\begin{align*}
\big\|w\big\|_{ST(I)} \leq  L, \quad  \big\|w (t_0)\big\|_{
H^1_x(\R^3)}  \leq  E_0
\end{align*}
for some $t_0\in I$.  Let $u(t_0)$ close
to $w(t_0)$ in the sense that for some $E'>0$, we have
\begin{align*}
\big\|u(t_0)-w(t_0)\big\|_{ H^1_x}\leq E'.
\end{align*}
 Assume also that for some $\varepsilon$, we have
\begin{align} \label{small:gamma0}
  \left\|  e^{i(t-t_0)\Delta}\big(u(t_0)-w(t_0)\big) \right\|_{ST(I)} & \leq   \varepsilon,\quad \big\|  e\big\|_{ST^*(I)} \leq   \varepsilon,
\end{align}
where $  0<\varepsilon \leq
\varepsilon_0=\varepsilon_0(E_0, E', L) $ is a small constant.
Then there exists a solution $u$ to \eqref{NLS} on $I\times \R^3$
with initial data $u(t_0)$ at time $t=t_0$ satisfying
\begin{align*}
\big\| u-w \big\|_{ST(I)} \leq& C(E_0, E', L) \;
\varepsilon, \quad \text{and}\quad  \big\|u\big\|_{  ST (I)} \leq
C(E_0, E', L).
\end{align*}
\end{proposition}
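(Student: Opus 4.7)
The strategy is the standard stability bootstrap adapted to the combined-power setting. Set $v := u - w$ (to be constructed together with $u$); it formally satisfies
$$i\partial_t v + \Delta v = \mathcal{N}(u) - \mathcal{N}(w) - e, \qquad \mathcal{N}(z) := -|z|^{4}z + |z|^{2}z,$$
with $v(t_0) = u(t_0) - w(t_0)$. The goal is to show $\|v\|_{ST(I)} \lesssim \varepsilon$, from which the existence of $u$ on $I$ follows via Theorem \ref{lwp}. I would first partition $I = \bigcup_{j=1}^{J} I_j$ into $J = J(L,\eta)$ consecutive subintervals on which
$$\|w\|_{ST(I_j)} \le \eta,$$
where $\eta = \eta(E_0,E') > 0$ is a small parameter to be chosen; such a partition exists since $\|w\|_{ST(I)}\le L$.

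On a typical subinterval $I_j = [t_j,t_{j+1}]$ I would apply the Strichartz estimate (Lemma \ref{Strichartz }) together with the exotic estimate (Lemma \ref{strichartz:exotic}) to the Duhamel formula for $v$, invoking the same product-type Besov bounds that underlie Theorem \ref{lwp}. Treating $\mathcal{N}(u)-\mathcal{N}(w)$ as a Lipschitz difference in $u,w$ yields the schematic estimates
$$\bigl\||u|^{4}u - |w|^{4}w\bigr\|_{ST^*(I_j)} \lesssim \bigl(\|v\|_{ST(I_j)}^{4} + \|w\|_{ST(I_j)}^{4}\bigr)\|v\|_{ST(I_j)},$$
$$\bigl\||u|^{2}u - |w|^{2}w\bigr\|_{ST^*(I_j)} \lesssim \bigl(\|v\|_{ST(I_j)}^{2} + \|w\|_{ST(I_j)}^{2}\bigr)\|v\|_{ST(I_j)}.$$
Setting $\gamma_j := \|e^{i(t-t_j)\Delta}v(t_j)\|_{ST(I_j)}$, Strichartz then gives
$$\|v\|_{ST(I_j)} \le C\gamma_j + C\|e\|_{ST^*(I_j)} + C\bigl(\eta^{2} + \eta^{4} + \|v\|_{ST(I_j)}^{2} + \|v\|_{ST(I_j)}^{4}\bigr)\|v\|_{ST(I_j)}.$$
Choosing $\eta$ small and running a continuity argument in the right endpoint of $I_j$ closes the bootstrap and produces $\|v\|_{ST(I_j)} \le 2C(\gamma_j + \|e\|_{ST^*(I_j)})$.

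Finally I would iterate across $j = 1,\dots,J$. The initial datum for the next step is controlled by Strichartz applied to the Duhamel representation on $I_j$, giving $\gamma_{j+1} \le C_0\gamma_j + C_0\|e\|_{ST^*(I_j)}$, and therefore $\max_{j}(\gamma_j + \|v\|_{ST(I_j)}) \le C_{0}^{J+1}\varepsilon$. Picking $\varepsilon_0 = \varepsilon_0(E_0,E',L)$ small enough forces the bootstrap hypothesis to hold at every step, and summing over $j$ yields both claimed bounds. The principal obstacle is the subcritical term $|u|^{2}u$: it cannot be absorbed using only the $\dot H^{1}$-admissible norms $L^{10}_t\dot B^{1/3}_{90/19,2}\cap L^{12}_tL^{9}_x$, so one must carry the $\dot H^{1/2}$-admissible components $L^{6}_t\dot B^{1/2}_{18/7,2}\cap L^{5}_{t,x}$ inside $ST(I_j)$ in parallel, and propagate uniform $H^{1}$ bounds on $u$ and $w$ throughout $I$ via Strichartz applied to the perturbed equation. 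It is this interplay between the two scaling levels that distinguishes the argument from its purely energy-critical counterpart.
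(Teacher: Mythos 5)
Your proposal is correct and follows essentially the same route as the paper: partition $I$ into $O(C(L,\delta))$ subintervals where $\|w\|_{ST(I_j)}$ is small, estimate the difference equation for $\gamma=u-w$ via the standard Strichartz estimate at the $\dot H^{1/2}$-admissible level together with the exotic Strichartz estimate of Lemma \ref{strichartz:exotic} for the $L^{10}_t\dot B^{1/3}_{90/19,2}\cap L^{12}_tL^9_x$ component, close a continuity/bootstrap argument on each subinterval, and iterate with the exponential loss absorbed by choosing $\varepsilon_0=\varepsilon_0(E_0,E',L)$ small. Your closing remark about having to carry the $\dot H^{1/2}$-admissible norms in parallel to handle the cubic term is exactly the feature that the paper's two-tier $ST$/$ST^*$ framework is designed for.
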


\begin{proof} Since $w\in ST(I)$, there exists a partition of the
right half of $I$ at $t_0$:
\begin{align*}
t_0<t_1<\cdots<t_N,\quad I_j=(t_j, t_{j+1}), \quad I\cap (t_0,
\infty)=(t_0, t_N),
\end{align*}
such that $N\leq C(L, \delta)$ and for any $j=0, 1, \ldots, N-1$, we
have
\begin{align}\label{small:w}
\big\|w\big\|_{ST(I_j)}\leq \delta\ll 1 .
\end{align}
The estimate on the left half of $I$ at $t_0$ is analogue, we omit it.

Let
\begin{align*}
\gamma(t,x)= & u(t,x)-w(t,x), \\
\gamma_j(t,x)=& e^{i(t-t_j)\Delta}\Big(u(t_j,x)-w(t_j,x)\Big),
\end{align*}
then $\gamma$ satisfies the following difference equation
\begin{align*}
i\gamma_t + \Delta \gamma = O(w^4\gamma + w^3\gamma^2 + w^2 \gamma^3
+ w\gamma^4 + \gamma^5 + w^2\gamma + w \gamma^2 + \gamma^3) -e,
\end{align*}
which implies that
\begin{align*}
\gamma(t)= & \gamma_j(t) -i \int^t_{t_j}e^{i(t-s)\Delta}
\Big(O(w^4\gamma + w^3\gamma^2 + w^2 \gamma^3 + w\gamma^4 + \gamma^5
+ w^2\gamma + w \gamma^2 + \gamma^3) -e\Big) \; ds,
\\
\gamma_{j+1}(t)= & \gamma_j(t) -i
\int^{t_{j+1}}_{t_j}e^{i(t-s)\Delta} \Big(O(w^4\gamma + w^3\gamma^2
+ w^2 \gamma^3 + w\gamma^4 + \gamma^5 + w^2\gamma + w \gamma^2 +
\gamma^3) -e\Big) \; ds.
\end{align*}

By Lemma \ref{Strichartz }, we have
\begin{align}
& \big\|\gamma-\gamma_j\big\|_{L^6_t\left(I_j; \dot B^{1/2}_{18/7,2}\right)\cap
L^{5}_{t,x}\left(I_j\right)} +
\big\|\gamma_{j+1}-\gamma_j\big\|_{L^6_t\left(\R; \dot B^{1/2}_{18/7,2}\right)\cap
L^{5}_{t,x}\left(\R \times \R^3 \right)} \label{estimate:gamma:S0}\\
& \lesssim  \big\| O(w^4\gamma + w^3\gamma^2 + w^2 \gamma^3 +
w\gamma^4 + \gamma^5  \big\|_{L^2_t\left(I_j; \dot B^{1/2}_{6/5,2}\right)} \nonumber \\
&\quad  + \big\| w^2\gamma + w \gamma^2 + \gamma^3)
\big\|_{L^2_t\left(I_j; \dot B^{1/2}_{6/5,2}\right)} +
\big\|e\big\|_{L^2_t\left(I_j; \dot B^{1/2}_{6/5,2}\right)} \nonumber \\
& \lesssim
\big\|w\big\|^4_{L^{12}_t\left(I_j;L^{9}_x\right)}\big\|\gamma\big\|_{L^6_t\left(I_j; \dot B^{1/2}_{18/7,2}\right) }
+ \big\|w\big\|^3_{L^{12}_t\left(I_j;L^{9}_x\right)} \big\|w\big\|_{L^6_t\left(I_j; \dot B^{1/2}_{18/7,2}\right) }\big\|\gamma\big\|_{L^{12}_t\left(I_j;L^{9}_x\right)}\nonumber \\
& \quad +
 \big\|w\big\|^3_{L^{12}_t\left(I_j;L^{9}_x\right)}
 \big\|\gamma\big\|_{L^{12}_t\left(I_j;L^{9}_x\right)}\big\|\gamma\big\|_{L^6_t\left(I_j; \dot B^{1/2}_{18/7,2}\right)} +
  \big\|w\big\|^2_{L^{12}_t\left(I_j;L^{9}_x\right)}\big\|w\big\|_{L^6_t\left(I_j; \dot B^{1/2}_{18/7,2}\right)}
 \big\|\gamma\big\|^2_{L^{12}_t\left(I_j;L^{9}_x\right)}\nonumber
 \\
 & \quad +
 \big\|w\big\|^2_{L^{12}_t\left(I_j;L^{9}_x\right)}
 \big\|\gamma\big\|^2_{L^{12}_t\left(I_j;L^{9}_x\right)}\big\|\gamma\big\|_{L^6_t\left(I_j; \dot B^{1/2}_{18/7,2}\right)} +
  \big\|w\big\|_{L^{12}_t\left(I_j;L^{9}_x\right)}\big\|w\big\|_{L^6_t\left(I_j; \dot B^{1/2}_{18/7,2}\right)}
 \big\|\gamma\big\|^3_{L^{12}_t\left(I_j;L^{9}_x\right)}\nonumber
 \\
 & \quad +
 \big\|w\big\|_{L^{12}_t\left(I_j;L^{9}_x\right)}
 \big\|\gamma\big\|^3_{L^{12}_t\left(I_j;L^{9}_x\right)}\big\|\gamma\big\|_{L^6_t\left(I_j; \dot B^{1/2}_{18/7,2}\right)} +
\big\|w\big\|_{L^6_t\left(I_j; \dot B^{1/2}_{18/7,2}\right)}
 \big\|\gamma\big\|^4_{L^{12}_t\left(I_j;L^{9}_x\right)}\nonumber \\
& \quad + \big\|\gamma\big\|^4_{L^{12}_t\left(I_j;L^{9}_x\right)}\big\|\gamma\big\|_{L^6_t\left(I_j; \dot B^{1/2}_{18/7,2}\right)} \nonumber\\
 & \quad +
 \big\|w\big\|^2_{L^6\left(I_j; L^{9/2}_x\right)}\big\|\gamma\big\|_{L^6_t\left(I_j; \dot B^{1/2}_{18/7,2}\right)} +
 \big\|w\big\| _{L^6\left(I_j; L^{9/2}_x\right)}\big\|w\big\|_{L^6_t\left(I_j; \dot B^{1/2}_{18/7,2}\right)} \big\|\gamma\big\|_{L^6\left(I_j; L^{9/2}_x\right)}\nonumber
 \\
 & \quad +
 \big\|w\big\|_{L^6\left(I_j; L^{9/2}_x\right)} \big\|\gamma\big\|_{L^6\left(I_j; L^{9/2}_x\right)}\big\|\gamma\big\|_{L^6_t\left(I_j; \dot B^{1/2}_{18/7,2}\right)} +
\big\|w\big\|_{L^6_t\left(I_j; \dot B^{1/2}_{18/7,2}\right)} \big\|\gamma\big\|^2_{L^6\left(I_j; L^{9/2}_x\right)}\nonumber
 \\
 & \quad +  \big\|\gamma\big\|^2_{L^6\left(I_j; L^{9/2}_x\right)}\big\|\gamma\big\|_{L^6_t\left(I_j; \dot B^{1/2}_{18/7,2}\right)}
  + \big\|e\big\|_{L^2_t\left(I_j; \dot B^{1/2}_{6/5,2}\right)}. \nonumber
\end{align}

At the same time, by Lemma \ref{strichartz:exotic}, we have
\begin{align}
& \big\|\gamma-\gamma_j\big\|_{L^{10}_t\left(I_j; \dot B^{1/3}_{90/19, 2}\right) \cap L^{12}_t\left(I_j; L^9_x \right)}
+ \big\|\gamma_{j+1}-\gamma_j\big\|_{L^{10}_t \left(\R; \dot B^{1/3}_{90/19, 2} \right)\cap L^{12}_t\left(\R; L^9_x\right) } \label{estimate:gamma:S1}\\
\lesssim & \left\| O(w^4\gamma + w^3\gamma^2 + w^2 \gamma^3 +
w\gamma^4 + \gamma^5 + w^2\gamma + w \gamma^2 + \gamma^3)
\right\|_{L^2(I_j; \dot B^{1/3}_{\frac{18}{11},2})} +   \big\| e
\big\|_{L^2(I_j; \dot B^{1/3}_{\frac{18}{11},2})}  \nonumber \\
\lesssim &
\big\|w\big\|^4_{L^{10}_{t,x}(I_j)}\big\|\gamma\big\|_{L^{10}_t(I_j;\dot
B^{1/3}_{90/19, 2})}
+\big\|w\big\|^3_{L^{10}_{t,x}(I_j)}\big\|w\big\|_{L^{10}_t(I_j;\dot
B^{1/3}_{90/19, 2})}\big\|\gamma\big\|_{L^{10}_{t,x}(I_j)}  \nonumber\\
& +
\big\|w\big\|^3_{L^{10}_{t,x}(I_j)}\big\|\gamma\big\|_{L^{10}_{t,x}(I_j)}\big\|\gamma\big\|_{L^{10}_t(I_j;\dot
B^{1/3}_{90/19, 2})} +
\big\|w\big\|^2_{L^{10}_{t,x}(I_j)}\big\|w\big\|_{L^{10}_t(I_j;\dot
B^{1/3}_{90/19, 2})}\big\|\gamma\big\|^2_{L^{10}_{t,x}(I_j)}  \nonumber\\
& +
\big\|w\big\|^2_{L^{10}_{t,x}(I_j)}\big\|\gamma\big\|^2_{L^{10}_{t,x}(I_j)}\big\|\gamma\big\|_{L^{10}_t(I_j;\dot
B^{1/3}_{90/19, 2})} +
\big\|w\big\|_{L^{10}_{t,x}(I_j)}\big\|w\big\|_{L^{10}_t(I_j;\dot
B^{1/3}_{90/19, 2})}\big\|\gamma\big\|^3_{L^{10}_{t,x}(I_j)}   \nonumber\\
& +
\big\|w\big\|_{L^{10}_{t,x}(I_j)}\big\|\gamma\big\|^3_{L^{10}_{t,x}(I_j)}\big\|\gamma\big\|_{L^{10}_t(I_j;\dot
B^{1/3}_{90/19, 2})} + \big\|w\big\|_{L^{10}_t(I_j;\dot
B^{1/3}_{90/19, 2})}\big\|\gamma\big\|^4_{L^{10}_{t,x}(I_j)} \nonumber \\
&  +
\big\|\gamma\big\|^4_{L^{10}_{t,x}(I_j)}\big\|\gamma\big\|_{L^{10}_t(I_j;\dot
B^{1/3}_{90/19, 2})}  \nonumber\\
& +
\big\|w\big\|^2_{L^5_{t,x}(I_j)}\big\|\gamma\big\|_{L^{10}_t(I_j;\dot
B^{1/3}_{90/19, 2})} +
\big\|w\big\|_{L^5_{t,x}(I_j)}\big\|w\big\|_{L^{10}_t(I_j;\dot
B^{1/3}_{90/19, 2})} \big\|\gamma\big\|_{L^5_{t,x}(I_j)}  \nonumber\\
&+
\big\|w\big\|_{L^5_{t,x}(I_j)}\big\|\gamma\big\|_{L^5_{t,x}(I_j)}\big\|\gamma\big\|_{L^{10}_t(I_j;\dot
B^{1/3}_{90/19, 2})} + \big\|w\big\|_{L^{10}_t(I_j;\dot
B^{1/3}_{90/19, 2})}
 \big\|\gamma\big\|^2_{L^5_{t,x}(I_j)} \nonumber\\
 &  +
 \big\|\gamma\big\|^2_{L^5_{t,x}(I_j)} \big\|\gamma\big\|_{L^{10}_t(I_j;\dot
B^{1/3}_{90/19, 2})}   +   \big\| e \big\|_{L^2(I_j; \dot
B^{1/3}_{18//11,2})}. \nonumber
\end{align}

By the interpolation, we have
\begin{align*}
\big\|f\big\|_{L^6\left(I_j; L^{9/2}_x\right)}
\lesssim\big\|f\big\|_{L^6_t\left(I_j; \dot B^{1/2}_{18/7,2}\right)}, \quad
\big\|f\big\|_{L^{10}_{t,x}(I_j)} \lesssim
\big\|f\big\|_{L^{10}_t(I_j;\dot B^{1/3}_{90/19, 2})}.
\end{align*}
Therefore, assuming that
\begin{align} \label{small:gamma}
\big\|\gamma\big\|_{ST(I_j)} \leq \delta \ll 1,\quad \forall \; j=0,
1, \ldots, N-1,
\end{align}
then by \eqref{small:w}, \eqref{estimate:gamma:S0} and
\eqref{estimate:gamma:S1}, we have
\begin{align*}
\big\|\gamma\big\|_{ST(I_j)} + \big\|\gamma_{j+1}\big\|_{ST(t_{j+1},
t_N)} \leq C\big\|\gamma_j\big\|_{ST(t_j, t_N)} + \varepsilon ,
\end{align*}
for some absolute constant $C>0$. By \eqref{small:gamma0} and iteration on $j$, we get
\begin{align*}
\big\|\gamma\big\|_{ST(I)} \leq (2C)^N \varepsilon \leq
\frac{\delta}{2},
\end{align*}
if we choose $\varepsilon_0$ sufficiently small. Hence the
assumption \eqref{small:gamma} is justified by continuity in $t$ and
induction on $j$. then repeating the estimate
\eqref{estimate:gamma:S0} and \eqref{estimate:gamma:S1} once again,
we can obtain the $ST$-norm estimate on $\gamma$, which implies
the Strichartz estimate on $u$.
\end{proof}

%
%
%
%

\section{Profile decomposition} In this part, we will use the method in \cite{BahG:NLW:proffile decomp, IbrMN:f:NLKG, Ker:NLS:profile decomp}
to show the linear and nonlinear profile
decompositions of the sequences of radial, $H^1$-bounded solutions of \eqref{NLS}, which will be used to construct the critical
element (minimal  energy non-scattering solution) and show its properties, especially the compactness. In order to do it, we now introduce the complex-valued function
$\overrightarrow{v}(t,x)$ by
\begin{align*}
\overrightarrow{v}(t,x)=\left<\nabla\right>v (t,x), \quad v (t,x)
=\left<\nabla\right>^{-1}\overrightarrow{v} (t,x).
\end{align*}

Given $(t^j_n, h^j_n)\in \R \times (0, 1]$, let
$\tau^j_n$, $T^j_n$ denote the scaled time drift, the  scaling transformation, defined by
\begin{align*}
\tau^j_n = - \frac{t^j_n}{\left(h^j_n\right)^2}, \quad T^j_n \varphi(x) =
\frac{1}{(h^j_n)^{3/2}} \varphi \left(\frac{x}{h^j_n}\right).
\end{align*}

We also introduce the set of Fourier multipliers on
$\R^3$.
\begin{align*} \mathcal{MC} = \{\mu =
\mathcal{F}^{-1}\widetilde{\mu}\mathcal{F}\; | \; \widetilde{\mu}\in
C(\R^3), \; \exists \lim_{|\xi|\rightarrow +\infty}
\widetilde{\mu}(\xi)\in \R \}.
\end{align*}

\subsection{Linear profile decomposition}In this subsection, we show
the profile decomposition with the scaling parameter of a sequence of the radial, free Schr\"{o}dinger
solutions in the energy space $H^1(\R^3)$, which implies the profile decomposition of a sequence of radial initial data.

\begin{proposition}\label{L:linear profile}
Let
$$\overrightarrow{v}_n(t,x)=e^{it\Delta}\overrightarrow{v}_n(0)$$
 be a sequence of the  radial solutions of the free
Schr\"{o}dinger equation with bounded $L^2$ norm. Then up to a
subsequence, there exist $K\in \{0, 1,2,\ldots, \infty\}$, radial functions
$\{\varphi^j\}_{j\in [0, K)}\subset L^2(\R^3)$ and $\{t^j_n,
h^j_n\}_{n\in \N} \subset \R  \times (0, 1]$ satisfying
\begin{align}\label{profile:linear}
\overrightarrow{v}_n(t,x) = \sum^{k-1}_{j=0}
\overrightarrow{v}^j_n(t,x) + \overrightarrow{w}^k_n(t,x),
\end{align}
where $ \overrightarrow{v}^j_n (t,x) = e^{i(t-t^j_n)\Delta} T^j_n
\varphi^j$, and
\begin{align}\label{small:w weak topology}
\lim_{k\rightarrow K} \varlimsup_{n\rightarrow +\infty}
\big\|\overrightarrow{w}^k_n\big\|_{L^{\infty}_t(\R;
B^{-3/2}_{\infty, \infty}(\R^3))} =0,
\end{align}
and for any Fourier multiplier $\mu \in \mathcal{MC}$, any
$l<j<k\leq K$ and any $t\in \R$,
\begin{align}
\lim_{n\rightarrow +\infty} \left(\log \left| \dfrac{h^j_n}{h^l_n} \right|
+ \left| \frac{t^j_n - t^l_n}{(h^l_n)^2} \right|  \right)=\infty,\label{orth:I}
\end{align}
\begin{align}
\lim_{n\rightarrow +\infty} \left< \mu \overrightarrow{v}^l_n(t)
\;,\; \mu \overrightarrow{v}^j_n(t) \right>_{L^2_x} =   \lim_{n\rightarrow +\infty}  \left< \mu
\overrightarrow{v}^j_n(t) \;,\; \mu \overrightarrow{w}^k_n(t)
\right>_{L^2_x} = 0 . \label{orth:II}
\end{align}
Moreover, each sequence $\{h^j_n\}_{n\in \N}$ is either going to $0$
or identically $1$ for all $n$.
\end{proposition}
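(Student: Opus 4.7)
The plan is to carry out a Bahouri--Gérard / Kéraani style profile decomposition adapted to the radial $H^1$-setting, where the only relevant invariances are time translation and the $L^2$-preserving scaling $T^j_n$ (radiality eliminates the spatial translation parameter present in the general case). The scheme is iterative: at each stage extract a bubble concentrating at some scale $h^j_n \in (0,1]$ and time drift $t^j_n$, subtract it, control the remainder in the Pythagorean sense in $L^2$, and stop when the remainder is small in the dispersive-weak norm $L^\infty_t B^{-3/2}_{\infty,\infty}$.

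The core analytic ingredient is an inverse Strichartz / concentration lemma: for any radial $L^2$-bounded sequence $\{\overrightarrow{g}_n\}$ with
\[
\eta := \varlimsup_{n\to\infty} \bigl\|e^{it\Delta}\overrightarrow{g}_n\bigr\|_{L^\infty_t B^{-3/2}_{\infty,\infty}} > 0,
\]
there exist (after a subsequence) $t_n \in \R$, $h_n \in (0,1]$ and a nonzero radial $\varphi \in L^2$ with $(T_n)^{-1} e^{it_n \Delta}\overrightarrow{g}_n \rightharpoonup \varphi$ weakly in $L^2$. To produce the parameters I would unpack the Besov norm to pick $t_n$, $k_n \in \Z$ and $x_n \in \R^3$ with $2^{-3k_n/2}|\Lambda_{k_n} * e^{it_n\Delta}\overrightarrow{g}_n(x_n)| \gtrsim \eta/2$; set $h_n := \min(2^{-k_n},1)$, rescale by $T_n$, and use radial symmetry plus Bernstein's inequality to localize the concentration in a bounded region of the rescaled variable. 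Weak $L^2$-compactness then yields $\varphi$, whose non-triviality is secured by the lower bound together with the radial Sobolev embedding, which provides the needed compactness in the absence of a translation parameter.

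With the extraction lemma in hand the iteration is standard. Starting from $\overrightarrow{w}^0_n := \overrightarrow{v}_n(0)$, each stage produces $\varphi^j$ and $(t^j_n, h^j_n)$; the weak convergence delivers the Pythagorean identity
\[
\|\overrightarrow{w}^j_n\|_{L^2}^2 = \|\varphi^j\|_{L^2}^2 + \|\overrightarrow{w}^{j+1}_n\|_{L^2}^2 + o(1).
\]
Since $\|\overrightarrow{v}_n(0)\|_{L^2}$ is uniformly bounded, summability forces $\|\varphi^j\|_{L^2} \to 0$ and hence $\eta(\{\overrightarrow{w}^k_n\}) \to 0$, which is exactly \eqref{small:w weak topology}. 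The constraint $h^j_n \in (0,1]$ is built into the construction: a hypothetical low-frequency profile ($h_n > 1$) is already controlled by the $L^2$ bound and can be absorbed into an $h_n = 1$ profile. By a further subsequence one reduces to $h^j_n \equiv 1$ or $h^j_n \to 0$, as claimed.

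The parameter orthogonality \eqref{orth:I} is proved by contradiction: if both $\log(h^j_n/h^l_n)$ and $(t^j_n-t^l_n)/(h^l_n)^2$ remained bounded, then the $j$-th profile could have been extracted already from $\overrightarrow{w}^{l+1}_n$ at the $l$-th stage, contradicting the fact that the residual was arranged to converge weakly to $0$ under the parameters $(t^l_n, h^l_n)$. The bilinear orthogonalities \eqref{orth:II} then follow from \eqref{orth:I} through a standard change of variables: scale orthogonality produces frequency disjointness after applying the symbol $\widetilde{\mu}$, while time-drift orthogonality yields vanishing cross terms via dispersive decay of the Schrödinger group; the remainder identity is immediate from the defining weak convergence. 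The main obstacle is the inverse Strichartz step: one must show that the norm $L^\infty_t B^{-3/2}_{\infty,\infty}$ is precisely the right obstruction and, in the radial setting without a translation parameter, that a nontrivial mass in that norm forces the weak limit after rescaling to be nonzero — for this the radial compactness is essential, and one must further justify that any concentration point $x_n$ with $|x_n| \gg h_n$ can still be captured by the $L^2$-weak extraction thanks to rotational invariance.
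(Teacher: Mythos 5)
Your proposal follows essentially the same route as the paper: iterative extraction of concentrating bubbles detected by the $L^{\infty}_t B^{-3/2}_{\infty,\infty}$ norm, with the radial Gagliardo--Nirenberg/Bernstein bound forcing the concentration point to satisfy $|x_n|\lesssim R\,h_n$ so that the translation parameter is absorbed into the profile, a Pythagorean expansion in $L^2$ forcing $\|\varphi^j\|_{L^2}\to 0$ and hence the vanishing of the remainder, and the same contradiction scheme for the parameter orthogonality followed by the bilinear orthogonality via weak convergence of the conjugated operators. The only point to sharpen is your closing worry: a concentration point with $|x_n|\gg h_n$ cannot carry a fixed fraction of the Besov norm at all (this is exactly what the radial bound shows), so no separate ``capture'' argument is needed there.
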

\begin{remark}\label{R:free energy orthogonality} We call $\overrightarrow{v}^j_n$ and $\overrightarrow{w}^k_n$ the free concentrating wave
and the remainder, respectively. From \eqref{orth:II}, we have the following
asymptotic orthogonality
\begin{align}\label{orth:II:linear energy}
\lim_{n\rightarrow +\infty} \left( \big\|\mu \overrightarrow{v}_n(t)
\big\|^2_{L^2}- \sum^{k-1}_{j=0}
 \big\|\mu \overrightarrow{v}^j_n(t)
\big\|^2_{L^2} -  \big\|\mu \overrightarrow{w}^k_n(t)
\big\|^2_{L^2}\right) & =0.
\end{align}
\end{remark}

\begin{proof}[Proof of Proposition \ref{L:linear profile}] Let
\begin{align*}
\nu :=\varlimsup_{n\rightarrow
\infty}\big\|\overrightarrow{v}_n\big\|_{L^{\infty}_tB^{-3/2}_{\infty,
\infty}} = \varlimsup_{n\rightarrow \infty} \sup_{(t,x)\in \R\times
\R^3,\atop k\geq 0} 2^{-3k/2}\big| \Lambda_k *
\overrightarrow{v}_n(t,x) \big|.
\end{align*}

If $\nu=0$, then we have done with $K=0$.

Otherwise, $\displaystyle \nu=\varlimsup_{n\rightarrow
\infty}\big\|\overrightarrow{v}_n\big\|_{L^{\infty}_tB^{-3/2}_{\infty,
\infty}}>0$. By the radial Gagliardo-Nirenberg inequality and the Bernstein inequality, we have
\begin{align*}
\sup_{t\in \R, |2^k x|\geq R,\atop k\geq 0} 2^{-3k/2}\big| \Lambda_k *
\overrightarrow{v}_n(t,x) \big| \lesssim & \sup_{ k\geq 0} \frac{2^k 2^{-3k/2}}{R}\big\|\Lambda_k *
\overrightarrow{v}_n(t,x)\big\|^{1/2}_{L^{\infty}_tL^2_x}\cdot \big\|\nabla \Lambda_k *
\overrightarrow{v}_n(t,x)\big\|^{1/2}_{L^{\infty}_tL^2_x}\\
\lesssim & \sup_{ k\geq 0} \frac{1}{R}\big\|
\overrightarrow{v}_n(t,x)\big\| _{L^{\infty}_tL^2_x} \lesssim \frac1R.
\end{align*}
If taking $R$ sufficiently large, we have
\begin{align*}  \sup_{t\in \R, |2^k x|\geq R, k\geq 0} 2^{-3k/2}\big| \Lambda_k *
\overrightarrow{v}_n(t,x) \big|\leq \frac12 \nu.
\end{align*}
thus, there exists a
sequence $(t_n, x_n, k_n)$ with $k_n \geq 0$ and $|2^{k_n}x_n|\leq R$ such that for large
$n$,
\begin{align*}
\frac12 \varlimsup_{n\rightarrow
 \infty}\big\|\overrightarrow{v}_n\big\|_{L^{\infty}_tB^{-3/2}_{\infty,
\infty}}  = \frac12 \nu \leq 2^{-3k_n/2}\big| \Lambda_{k_n} *
\overrightarrow{v}_n(t_n,x_n) \big|.
\end{align*}

Now we define $h_n$ and $\psi_n$ by $h_n = 2^{-k_n} \in (0, 1]$ and
\begin{align}
\overrightarrow{v}_n(t_n, x) = & \left(T_n \psi_n\right)(x-x_n)= \frac{1}{(h_n)^{3/2}}
\psi_n \left(\frac{x-x_n}{h_n}\right)\label{decom:I}\\
= & T_n \left(\psi_n\Big(x-\frac{x_n}{h_n}\Big)\right).\nonumber
\end{align}
Since $ \big\|\psi_n\big\|_{L^2} = \big\| T_n \psi_n\big\|_{L^2} =
\big\|\overrightarrow{v}_n(t_n)\big\|_{L^2} \leq C, $ then there
exists some $\psi \in L^2$, such that, up to a subsequence, we have as $n\rightarrow +\infty$
\begin{align}\label{decom:I:prop I}
\frac{x_n}{h_n} \rightarrow x^0, \;\; \text{and}\;\; \psi_n\rightharpoonup \psi\quad\text{weakly in}\;\;  L^2.
\end{align}

On the other hand, if $k_n =0$, we have
\begin{align*}
2^{-3k_n/2}\big| \Lambda_{k_n} * \overrightarrow{v}_n(t_n,x_n) \big|
= & \displaystyle \int_{\R^3} \Lambda_{0}(y)\;\;
2^{-3k_n/2}\overrightarrow{v}_n\left(t_n, x_n -
\frac{y}{2^{k_n}}\right) \; dy  \\
= & \int_{\R^3} \Lambda_{0}(y)\;
\psi_n(-y) \; dy  \\
 \longrightarrow & \int_{\R^3} \Lambda_{0}(y)\;
\psi(-y) \; dy  \lesssim   \big\|\psi\big\|_{L^2}.
\end{align*}
By the same way, if $k_n \geq 1$, we have
\begin{align*}
2^{-3k_n/2}\big| \Lambda_{k_n} * \overrightarrow{v}_n(t_n,x_n) \big|
= & \int_{\R^3} \Lambda_{(0)}(y)\;
2^{-3k_n/2}\overrightarrow{v}_n\left(t_n, x_n -
\frac{y}{2^{k_n}}\right) \; dy \\
= &  \int_{\R^3} \Lambda_{(0)}(y)\;\psi_n(-y) \; dy \\
 \longrightarrow &
\int_{\R^3} \Lambda_{(0)}(y)\;\psi(-y) \; dy  \lesssim
\big\|\psi\big\|_{L^2}.
\end{align*}
If $h_n\rightarrow 0$, then we take
\begin{align*}
(t^0_n,  h^0_n)=  (t_n,   h_n), \quad \varphi^0(x)=\psi\left(x-x^0\right),
\end{align*}
otherwise, up to a subsequence, we may assume that $h_n\rightarrow h_{\infty}$ for some
$h_{\infty} \in (0, 1]$, and take
\begin{align*}
(t^0_n,  h^0_n)=  (t_n,   1), \quad
\varphi^0(x)=\frac{1}{(h_{\infty})^{3/2}}\psi\left(\frac{x}{h_{\infty}}-x^0\right),
\end{align*}
then
\begin{align}\label{decom:I:prop II}
T_n\left(\psi \Big(x-\frac{x_n}{h_n}\Big)\right)- T^0_n\varphi^0 (x)\longrightarrow 0 \quad\text{strongly in}\;\; L^2.
\end{align}
In addition, since $\overrightarrow{v}_n(t_n, x) = \left(T_n \psi_n\right)(x-x_n)$ is radial, so is $\varphi^0(x)$.

Let $\overrightarrow{v}^0_n (t,x) =  e^{i(t-t^0_n)\Delta}T^0_n
\varphi^0 $,  we define $\overrightarrow{w}^1_n$ by
\begin{align}\label{decom:I:profile}
\overrightarrow{v}_n (t,x) = & \overrightarrow{v}^0_n (t,x) +
\overrightarrow{w}^1_n (t,x),
\end{align}
then by \eqref{decom:I:prop I} and \eqref{decom:I:prop II}, we have
\begin{align*}
(T^0_n)^{-1}\overrightarrow{w}^1_n(t^0_n) = (T^0_n)^{-1}T_n \left(\psi_n\Big(x-\frac{x_n}{h_n}\Big)\right)
-\varphi^0 \rightharpoonup 0\quad \text{weakly in}\;\; L^2,
\end{align*}
which implies that
\begin{align*}
\left<\mu \overrightarrow{v}^0_n(t), \mu
\overrightarrow{w}^1_n(t)\right> = & \left<\mu
\overrightarrow{v}^0_n(t^0_n), \mu
\overrightarrow{w}^1_n(t^0_n)\right>  =  \left<\mu^0_n \varphi^0,
\mu^0_n (T^0_n)^{-1} \overrightarrow{w}^1_n(t^0_n)\right>
\longrightarrow 0,
\end{align*}
where we used the conservation law in the first equality and the dominated convergence theorem and
$\mu^0_n(D)=\mu\left(\frac{D}{h^0_n}\right)$ in the last equality. It is the
decomposition for $k=1$.

Next we apply the above procedure to the sequence
$\overrightarrow{w}^1_n$ in place of $\overrightarrow{v}_n$, then
either $\displaystyle \varlimsup_{n\rightarrow
\infty}\big\|\overrightarrow{w}^1_n\big\|_{L^{\infty}_tB^{-3/2}_{\infty,
\infty}} =0$ or we can find the next concentrating wave
$\overrightarrow{v}^1_n$ and the remainder $\overrightarrow{w}^2_n$,
such that for some $(t^1_n,  h^1_n)$ with $h^1_n \in (0, 1]$
and radial function $\varphi^1 \in L^2(\R^3)$,
\begin{align}\label{decom:II}
\overrightarrow{w}^1_n(t,x)= \overrightarrow{v}^1_n(t,x) + &
\overrightarrow{w}^2_n(t,x)  = e^{i(t-t^1_n)\Delta}T^1_n\varphi^1(x)
+ \overrightarrow{w}^2_n (t,x),\end{align}
and
\begin{align}\label{decom:II:prop}
\varlimsup_{n\rightarrow +\infty}
\big\|\overrightarrow{w}^1_n\big\|_{L^{\infty}_tB^{-3/2}_{\infty,
\infty}} \lesssim  &\;
\big\|\varphi^1\big\|_{L^2} = \big\|\overrightarrow{v}^1_n\big\|_{L^2},
\\
(T^1_n)^{-1}\overrightarrow{w}^2_n(t^1_n) \rightharpoonup   0\quad
\text{weakly in}\;\; L^2 & \Longrightarrow   \left<\mu
\overrightarrow{v}^1_n(t), \mu \overrightarrow{w}^2_n(t)\right>
\longrightarrow
  0. \nonumber
\end{align}

Iterating the above procedure, we can obtain the decomposition
\eqref{profile:linear}. It remains to show the properties \eqref{small:w weak topology}, \eqref{orth:I} and \eqref{orth:II}.

We first assume that  \eqref{orth:II} holds, then by
\eqref{orth:II:linear energy} and the Cauchy criterion, we have
\begin{align}\label{decom:k:prop}
\lim_{n\rightarrow +\infty}
\big\|\overrightarrow{w}^k_n\big\|_{L^{\infty}_tB^{-3/2}_{\infty,
\infty}} \lesssim \big\|\varphi^k\big\|_{L^2} =
\big\|\overrightarrow{v}^k_n\big\|_{L^2}\longrightarrow
  0\quad \text{as}\;\; k\rightarrow +\infty.
\end{align}
which implies \eqref{small:w weak topology}.

Now we show \eqref{orth:I} by contradiction. Suppose that \eqref{orth:I} fails,
then there exists a minimal $(l,j)$ which violates \eqref{orth:I}. By
extracting a subsequence, We may assume that $h^l_n \rightarrow
h^l_{\infty}$ and $ h^l_n/h^j_n $ and $(t^l_n - t^j_n)/(h^l_n)^2$   all converge.

Now consider
\begin{align*}
\left(T^l_n\right)^{-1}\overrightarrow{w}^{l+1}_n(t^l_n) = &
\sum^{j}_{m=l+1}\left(T^l_n\right)^{-1}\overrightarrow{v}^m_n(t^l_n)
+
\left(T^l_n\right)^{-1}\overrightarrow{w}^{j+1}_n(t^l_n) \\
= & \sum^{j}_{m=l+1}\left(T^l_n\right)^{-1}e^{i(t^l_n -t^m_n)\Delta}
T^m_n\varphi^m +
\left(T^l_n\right)^{-1}\overrightarrow{w}^{j+1}_n(t^l_n) \\
=&  \sum^{j-1}_{m=l+1} S^{l,m}_n\varphi^m + S^{l,j}_n\varphi^j +
\left(T^l_n\right)^{-1}\overrightarrow{w}^{j+1}_n(t^l_n),
\end{align*}
where
\begin{align*}
S^{l,m}_n=\left(T^l_n\right)^{-1}e^{i(t^l_n -t^m_n)\Delta} T^m_n =
e^{i\frac{t^l_n -t^m_n}{(h^l_n)^2}\Delta}
\left(T^l_n\right)^{-1}T^m_n:=e^{it^{l,m}_n\Delta} T^{l,m}_n
\end{align*}
with the sequence
\begin{align}\label{combine:parameters}
t^{l,m}_n= \frac{t^l_n -t^m_n}{(h^l_n)^2}, \quad h^{l,m}_n = \frac{h^m_n}{h^l_n}.
\end{align}

By the procedure of constructing \eqref{profile:linear}, as $n\rightarrow +\infty$, we have
\begin{align*}
\left(T^l_n\right)^{-1}\overrightarrow{w}^{l+1}_n(t^l_n)
\rightharpoonup    0 & \quad \text{weakly in}\; L^2,\\
\left(T^j_n\right)^{-1}\overrightarrow{w}^{j+1}_n(t^j_n)
\rightharpoonup   0 & \quad \text{weakly in}\; L^2,
 \end{align*}and by the asymptotic
 orthogonality \eqref{orth:I} between $m$ and $l$ with $m \in [l+1,
j-1]$
\begin{align*}
S^{l,m}_n \varphi^m \rightharpoonup   0, \;\; \forall\; m \in [l+1,
j-1],
\end{align*}
and by the convergence of $ h^l_n/h^j_n $ and   $(t^l_n -
t^j_n)/(h^l_n)^2$, we have $ S^{l,j}_n\varphi^j \rightarrow
S^{l,j}_{\infty}\varphi^j$ and
\begin{align*}
\left(T^l_n\right)^{-1}\overrightarrow{w}^{j+1}_n(t^l_n) = &
S^{l,j}_n\left(T^j_n\right)^{-1}\overrightarrow{w}^{j+1}_n(t^j_n)
\rightharpoonup   0 \quad  \text{weakly in}\; L^2.
\end{align*}
Then $\varphi^j=0$, it is a contradiction. Thus we obtain the
orthogonality \eqref{orth:I}.

Last we show \eqref{orth:II}. For $j\not = l$, we have
\begin{align*}
& \left< \mu \overrightarrow{v}^l_n(t) \;,\; \mu
\overrightarrow{v}^j_n(t) \right>_{L^2_x}\\
 = &  \left< \mu
\overrightarrow{v}^l_n(0) \;,\; \mu \overrightarrow{v}^j_n(0)
\right>_{L^2_x}  =   \left< \mu e^{-it^l_n\Delta}T^l_n \varphi^l
\;,\; \mu
e^{-it^j_n\Delta}T^j_n\varphi^j\right>_{L^2_x}\\
= & \left< e^{-it^l_n\Delta}T^l_n \mu^l_n \varphi^l \;, \;
e^{-it^j_n\Delta}T^j_n \mu^j_n \varphi^j\right>_{L^2_x}  =  \left<
\left(T^j_n\right)^{-1} e^{i(t^j_n-t^l_n)\Delta}T^l_n \mu^l_n
\varphi^l \;,\;
 \mu^j_n \varphi^j\right>_{L^2_x} \\
 = & \left< e^{i\frac{t^j_n-t^l_n}{(h^j_n)^2}\Delta} \left(T^j_n\right)^{-1}T^l_n
\mu^l_n \varphi^l \;,\;
 \mu^j_n \varphi^j\right>_{L^2_x}
 =  \left< S^{j,l}_n\mu^l_n \varphi^l \;,\;
 \mu^j_n \varphi^j\right>_{L^2_x}
  \rightarrow   0 \quad \text{as}\;\;
 n\rightarrow +\infty
\end{align*}
where $\widetilde{\mu}^l_n (\xi)= \widetilde{\mu}
\left(\xi/h^l_n\right)$ and we used the fact that $S^{j,l}_n
\rightharpoonup 0$ weakly in $L^2$ as $n\rightarrow +\infty$ by
\eqref{orth:I}. In addition, we have
\begin{align*}
\left< \mu \overrightarrow{v}^j_n(t) \;, \; \mu
\overrightarrow{w}^k_n(t) \right>_{L^2_x} = \left< \mu
\overrightarrow{v}^j_n(t) \;,\; \mu \Big(
\overrightarrow{w}^{j+1}_n(t)-\sum^{k-1}_{m=j+1}\overrightarrow{v}^{m}_n(t)\Big)
\right>_{L^2_x}\longrightarrow 0
\end{align*}
as $n \rightarrow +\infty$. This completes the proof of
\eqref{orth:II}.
\end{proof}

After the orthogonality's proof of the linear energy, we begin with the orthogonal analysis for the nonlinear energy.

\begin{lemma}Let $\overrightarrow{v}_n$ be a sequence of the radial solutions of the free Schr\"{o}dinger equation. Let
\begin{align*}
\overrightarrow{v}_n(t,x) = \sum^{k-1}_{j=0}
\overrightarrow{v}^j_n(t,x) + \overrightarrow{w}^k_n(t,x)
\end{align*} be the linear profile decomposition given by Proposition \ref{L:linear profile}. Then we have
\begin{align*}
\lim_{k\rightarrow K}  \varlimsup_{n\rightarrow +\infty} \left|
M(v_n(0)) - \sum^{k-1}_{j=0} M(v^j_n(0)) -M(w^k_n(0))\right| =0, \\
\lim_{k\rightarrow K}  \varlimsup_{n\rightarrow +\infty} \left|
E(v_n(0)) - \sum^{k-1}_{j=0}E(v^j_n(0)) - E (w^k_n(0))\right| =0, \\
\lim_{k\rightarrow K} \varlimsup_{n\rightarrow +\infty} \left|
K(v_n(0)) - \sum^{k-1}_{j=0} K(v^j_n(0)) -K(w^k_n(0))\right| =0.
\end{align*}

\end{lemma}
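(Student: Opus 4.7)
The plan is to prove orthogonality separately for each of the four building blocks of $M$, $E$, $K$, namely $\|v_n(0)\|_{L^2}^2$, $\|\nabla v_n(0)\|_{L^2}^2$, $\|v_n(0)\|_{L^6}^6$, and $\|v_n(0)\|_{L^4}^4$; the three claimed identities then follow by taking the obvious linear combinations, since the decomposition $\overrightarrow{v}_n = \sum_j \overrightarrow{v}^j_n + \overrightarrow{w}^k_n$ transfers via $\langle\nabla\rangle^{-1}$ to $v_n = \sum_j v^j_n + w^k_n$.

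For the two quadratic functionals the orthogonality is immediate from Remark \ref{R:free energy orthogonality}: apply it with $\mu=\langle\nabla\rangle^{-1}\in\mathcal{MC}$ to get the mass orthogonality, and componentwise with $\mu=\nabla\langle\nabla\rangle^{-1}\in\mathcal{MC}$ to get the kinetic-energy orthogonality; conservation of the free flow lets us evaluate everything at $t=0$. For the nonlinear $L^p$ terms ($p=4,6$) the argument splits into two ingredients. First, smallness of the remainder: since $\overrightarrow{w}^k_n$ is bounded in $L^2$ (by \eqref{orth:II:linear energy}) and satisfies $\|\overrightarrow{w}^k_n\|_{L^\infty_t B^{-3/2}_{\infty,\infty}}\to 0$ as $k\to K$, an interpolation estimate of Gerard--Meyer--Oru type (roughly $\|f\|_{L^p}^p\lesssim \|f\|_{H^1}^{\alpha}\|f\|_{\dot B^{-1/2}_{\infty,\infty}}^{p-\alpha}$, scaling-dictated exponents) applied to $w^k_n=\langle\nabla\rangle^{-1}\overrightarrow{w}^k_n$ yields $\|w^k_n(0)\|_{L^p}\to 0$. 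Second, pairwise orthogonality of profiles: for $j\ne l$ and any $a,b\ge0$ with $a+b\in\{4,6\}$,
\begin{align*}
\int_{\R^3}|v^j_n(0)|^{a}|v^l_n(0)|^{b}\,dx\longrightarrow 0,
\end{align*}
which I will derive from \eqref{orth:I} by a change of variables that rescales by $h^j_n$; the $j$-th profile becomes a fixed translate of $\varphi^j$ (up to a free evolution), while the $l$-th profile becomes $e^{it^{j,l}_n\Delta}T^{j,l}_n\varphi^l$ with parameters \eqref{combine:parameters}. When $|\log h^{j,l}_n|\to\infty$ one profile concentrates and the other spreads (or vice versa) and the product integral vanishes after approximating $\varphi^j,\varphi^l$ by $C_c^\infty$ radial functions and invoking dominated convergence; when $h^{j,l}_n$ stays bounded away from $0$ and $\infty$ but $|t^{j,l}_n|\to\infty$, the standard dispersive estimate $\|e^{it\Delta}\varphi\|_{L^p}\lesssim |t|^{-3(1/2-1/p)}\|\varphi\|_{L^{p'}}$ applied after density approximation forces the overlap to $0$. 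Finally, the cross terms mixing a profile with the remainder are handled by H\"older combined with the $L^p$-smallness of $w^k_n(0)$ established above.

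The main technical obstacle is the $L^6$ endpoint, for two reasons. First, $L^6$ is $\dot H^1$-critical in three dimensions, so the interpolation used to kill $\|w^k_n(0)\|_{L^6}$ cannot rely on an elementary Gagliardo--Nirenberg estimate and must genuinely exploit the refined embedding $\dot H^1\cap \dot B^{-1/2}_{\infty,\infty}\hookrightarrow L^6$ to convert the Besov smallness from \eqref{small:w weak topology} into $L^6$ smallness. Second, the dilation parameters alone carry no decay in the $L^6$ pairwise estimate (since $L^6$ is invariant under the $T^j_n$-action), so the pairwise orthogonality at $p=6$ must proceed through the spatial concentration/spreading mechanism or the time-dispersion estimate, both of which require a careful density approximation of the profiles $\varphi^j$ to turn a scale or time separation into an honest $L^6$ overlap bound. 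For $p=4$ the analogous argument is easier because $L^4$ is strictly $\dot H^{3/4}$-subcritical and the dilation parameter $h^j_n\in(0,1]$ already supplies decay in the cross integrals.
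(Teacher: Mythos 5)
Your reduction of the quadratic parts to Remark \ref{R:free energy orthogonality} and your treatment of the remainder (Besov smallness \eqref{small:w weak topology} plus refined interpolation to get $\|w^k_n(0)\|_{L^p}\to 0$ for $2<p\le 6$) match the paper. The gap is in the pairwise orthogonality step, specifically in your first case. Your dichotomy is keyed only to the relative parameters $(h^{j,l}_n,t^{j,l}_n)$ of \eqref{combine:parameters}: scale separation is to be handled by a geometric concentration/spreading argument with $C_c^\infty$ approximants and dominated convergence, and dispersion is invoked only when the scales stay comparable. But at $t=0$ the $j$-th profile is $\left<\nabla\right>^{-1}T^j_n e^{i\tau^j_n\Delta}\varphi^j$ with $\tau^j_n=-t^j_n/(h^j_n)^2$, and in the scale-separated case nothing prevents $\tau^j_n\to\pm\infty$ (and likewise $\tau^l_n$). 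When that happens, approximating $\varphi^j$ by a compactly supported function does not make the profile ``a fixed translate of $\varphi^j$ up to a free evolution'' in any usable sense: $e^{i\tau^j_n\Delta}\phi$ spreads over regions of diameter $\sim|\tau^j_n|$, there is no fixed dominating profile, and dominated convergence does not apply. Moreover, scale separation by itself carries no decay for the critical $L^6$ overlap: if one only knows that the small-scale bubble is an arbitrary $\dot H^1$-bounded sequence in its own frame (which is all you retain once $\tau^l_n$ diverges and compactness is lost), one can arrange $\int|v^j_n|^a|v^l_n|^b$ to stay bounded away from zero. So the vanishing of the overlaps genuinely needs the time parameters to be brought under control first, not just the relative scales.

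The repair is exactly the route the paper takes: first split the profiles according to whether $\tau^j_n$ converges or diverges, and discard all time-divergent profiles at the outset using the dispersive estimate (after density approximation), which makes their $L^p$ norms, $2<p\le 6$, vanish regardless of the scales; only for the remaining, time-frozen profiles does orthogonality \eqref{orth:I} reduce to scale separation $h^{j,l}_n\to 0$, and then your concentration/spreading picture is legitimate. (For the critical $L^6$ term the paper implements it by replacing the frozen profiles with $L^6$ approximants $\widehat\psi^j$ and truncating to pairwise disjoint supports, so that $F_1$ of the sum equals the sum of $F_1$ exactly; for $L^4$ the $h^j_n\to 0$ bubbles vanish individually by subcriticality and at most one unit-scale, finite-time profile survives, as you anticipate.) With this reordering of the case analysis your argument closes; as written, the scale-separated/time-divergent configurations fall through the cracks.
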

\begin{proof} We can show that the quadratic terms in $M$, $E$ and $K$ have the orthogonal decomposition
by taking $ \mu=\frac{1}{\left<\nabla\right>} $ and $ \mu=\frac{|\nabla|}{\left<\nabla\right>} $
in Remark \ref{R:free energy orthogonality}, thus it suffices to show that
\begin{align*}
\lim_{k\rightarrow K} \varlimsup_{n\rightarrow +\infty} \left|
F_i\left(v_n(0)\right) - \sum_{j<k} F_i\left(v^j_n(0)\right) - F_i\left(w^k_n(0)\right) \right| =0,
\quad i=1,2,
\end{align*}
where $F_1$ and $F_2$ are denoted by
\begin{align*}
F_1(u(t)) =   \int_{\R^3}
|u(t,x)|^6  \; dx, \;\; F_2(u(t))=  \int_{\R^3}
 |u(t,x)|^4   \; dx.
\end{align*}

In order to do so, we need re-arrange the linear concentrating wave with respect to its dispersive decay
(whether $\tau^j_n$ goes to $\pm \infty$ or not for all $j$).
Let $v^{<k}_n(0)= \displaystyle \sum_{j<k}v^j_n(0)= \sum_{j<k,
\tau^j_n \rightarrow \tau^j_{\infty}}v^j_n(0) +  \sum_{j<k, \tau^j_n
\rightarrow \pm \infty} v^j_n(0)$ for some finite numbers
$\tau^j_{\infty}$'s, then we have
\begin{align}
\Big| F_i\left(v_n(0)\right) -&  \sum_{j<k}
F_i\left(v^j_n(0)\right) - F_i\left(w^k_n(0)\right)
\Big| \nonumber\\
 \leq & \left| F_i\left(v_n(0)\right) - F_i\left(v^{<k}_n(0)\right) \right| +
\left|F_i\left(v^{<k}_n(0)\right) - F_i\left(\sum_{j<k, \tau^j_n
\rightarrow
\tau^j_{\infty}} v^j_n(0) \right)\right| \nonumber\\
& + \left|F_i\left(\sum_{j<k, \tau^j_n \rightarrow
\tau^j_{\infty}}v^j_n(0) \right) -  \sum_{j<k, \tau^j_n \rightarrow
\tau^j_{\infty}} F_i\left(v^j_n(0)\right) \right| \label{orth:non dispersive}\\
&  + \left| \sum_{j<k, \tau^j_n \rightarrow \pm \infty}
F_i\left(v^j_n(0)\right) \right| +\left| F_i\left(w^k_n(0)\right)
\right|. \nonumber
\end{align}

First, by \eqref{small:w weak topology} and interpolation, we have
that
\begin{align*}
\lim_{k\rightarrow K} \varlimsup_{n\rightarrow +\infty}
\big\|w^k_n(0)\big\|_{L^p_x}=0,\quad \forall \; \; 2 < p \leq 6.
\end{align*}
which implies that
\begin{align*}
\lim_{k\rightarrow K} \varlimsup_{n\rightarrow +\infty} \left|
F_i\left(v_n(0)\right) - F_i\left(v^{<k}_n(0)\right) \right|& =0,\\
\lim_{k\rightarrow K} \varlimsup_{n\rightarrow +\infty} \left|
F_i\left(w^k_n(0)\right) \right| =0.&
\end{align*}

Second by the dispersive estimate for $v^j_n(0)$ with $\tau^j_n
\rightarrow \pm \infty$, we have
\begin{align*}
\lim_{k\rightarrow K} \varlimsup_{n\rightarrow +\infty}
\left|F_i(v^{<k}_n(0)) - F_i\left(\sum_{j<k, \tau^j_n \rightarrow
\tau^j_{\infty}} v^j_n(0) \right)\right| & =0, \\
\lim_{k\rightarrow K} \varlimsup_{n\rightarrow +\infty} \left|
\sum_{j<k, \tau^j_n \rightarrow \pm \infty} F_i\left(v^j_n(0)\right)
\right|=0. &
\end{align*}

Last we will use the approximation argument in \cite{IbrMN:f:NLKG} to show that every non-dispersive concentrating wave
will get away from the others, which contributes to the orthogonality of \eqref{orth:non dispersive}.
Let $\psi^j := e^{i\tau^j_{\infty} \Delta}\varphi^j \in L^2$,
we have
\begin{align}
& \left|F_i\left(\sum_{j<k, \tau^j_n \rightarrow \tau^j_{\infty}}v^j_n(0)
\right) -  \sum_{j<k, \tau^j_n \rightarrow \tau^j_{\infty}}
F_i\left(v^j_n(0)\right)
\right|\\
&\leq  \left|F_i\left(\sum_{j<k, \tau^j_n \rightarrow
\tau^j_{\infty}}v^j_n(0) \right) - F_i\left(\sum_{j<k, \tau^j_n
\rightarrow \tau^j_{\infty}}
\left<\nabla\right>^{-1} T^j_n  \psi^j  \right) \right| \nonumber\\
&\quad  + \left| \sum_{j<k, \tau^j_n \rightarrow \tau^j_{\infty}}
F_i\left(v^j_n(0)\right) - \sum_{j<k, \tau^j_n \rightarrow
\tau^j_{\infty}}
F_i\left(\left<\nabla\right>^{-1} T^j_n  \psi^j )\right) \right| \nonumber\\
&\quad + \left| F_i\left(\sum_{j<k, \tau^j_n \rightarrow \tau^j_{\infty}}
\left<\nabla\right>^{-1} T^j_n  \psi^j  \right) - \sum_{j<k,
\tau^j_n \rightarrow \tau^j_{\infty}}
F_i\left(\left<\nabla\right>^{-1} T^j_n \psi^j )\right)  \right|.
\label{orth:nonlinearity:most difficult}
\end{align}
For those $v^j_n(0)$ with $\tau^j_n \rightarrow \tau^j_{\infty}$, by
the continuity of the operator $e^{it\Delta}$ in $t$ in $H^1$, we
have
\begin{align*}
v^j_n(0)=& \left<\nabla\right>^{-1} e^{-it^j_n\Delta}T^j_n\varphi^j
=  \left<\nabla\right>^{-1} T^j_n
e^{i\tau^j_n\Delta}\varphi^j\longrightarrow \left<\nabla\right>^{-1}
T^j_n  \psi^j \quad \text{in}\;\; H^1(\R^3),
\end{align*}
which implies that
\begin{align*}
\left|F_i\left(\sum_{j<k, \tau^j_n \rightarrow
\tau^j_{\infty}}v^j_n(0) \right) - F_i\left(\sum_{j<k, \tau^j_n
\rightarrow \tau^j_{\infty}}
\left<\nabla\right>^{-1} T^j_n  \psi^j  \right) \right| \rightarrow 0,\\
 \left| \sum_{j<k, \tau^j_n \rightarrow \tau^j_{\infty}}
F_i\left(v^j_n(0)\right) - \sum_{j<k, \tau^j_n \rightarrow
\tau^j_{\infty}} F_i\left(\left<\nabla\right>^{-1} T^j_n  \psi^j
)\right) \right|\rightarrow 0.
\end{align*}

Now we consider \eqref{orth:nonlinearity:most difficult} for $i=1,
2$, separately.

First for $i=2$, we compute as following,
\begin{align*}
& \left| F_2\left(\sum_{j<k, \tau^j_n \rightarrow \tau^j_{\infty}}
\left<\nabla\right>^{-1} T^j_n  \psi^j  \right) - \sum_{j<k,
\tau^j_n \rightarrow \tau^j_{\infty}}
F_2\left(\left<\nabla\right>^{-1} T^j_n
\psi^j )\right)  \right| \\
 & \leq \left| F_2\left(\sum_{j<k, \tau^j_n \rightarrow
\tau^j_{\infty}} \left<\nabla\right>^{-1} T^j_n  \psi^j  \right)  -
F_2\left(\sum_{j<k, \tau^j_n \rightarrow \tau^j_{\infty}, h^j_n=1}
\left<\nabla\right>^{-1} T^j_n  \psi^j  \right) \right| \\
&\quad + \left| \sum_{j<k, \tau^j_n \rightarrow \tau^j_{\infty}}
F_2\left(\left<\nabla\right>^{-1} T^j_n \psi^j )\right)  -\sum_{j<k,
\tau^j_n \rightarrow \tau^j_{\infty}, h^j_n=1}
F_2\left(\left<\nabla\right>^{-1} T^j_n \psi^j )\right)\right| \\
& \quad + \left| F_2\left(\sum_{j<k, \tau^j_n \rightarrow \tau^j_{\infty},
h^j_n=1} \left<\nabla\right>^{-1} T^j_n  \psi^j  \right) -
\sum_{j<k, \tau^j_n \rightarrow \tau^j_{\infty}, h^j_n=1}
F_2\left(\left<\nabla\right>^{-1} T^j_n \psi^j )\right)\right|.
\end{align*}

For $h^j_n \rightarrow 0$, we have
\begin{align*}
\left<\nabla\right>^{-1} T^j_n  \psi^j  \rightarrow 0\quad
\text{in}\;\; L^p,\;\; \forall\; 2\leq p<6,
\end{align*}
which implies that
\begin{align*}
\left| F_2\left(\sum_{j<k, \tau^j_n \rightarrow \tau^j}
\left<\nabla\right>^{-1} T^j_n  \psi^j  \right)  -
F_2\left(\sum_{j<k, \tau^j_n \rightarrow \tau^j, h^j_n=1}
\left<\nabla\right>^{-1} T^j_n  \psi^j  \right) \right| \rightarrow 0, \\
\left| \sum_{j<k, \tau^jn_n \rightarrow \tau^j}
F_2\left(\left<\nabla\right>^{-1} T^j_n \psi^j )\right)  -\sum_{j<k,
\tau^j_n \rightarrow \tau^j, h^j_n=1}
F_2\left(\left<\nabla\right>^{-1} T^j_n \psi^j )\right)\right|
\rightarrow 0.
\end{align*}
In addition, by the orthogonality \eqref{orth:I}, we know that there is at most one term $\left<\nabla\right>^{-1} T^j_n \psi^j$
with $\tau^j_n \rightarrow
\tau^j_{\infty}, h^j_n= 1$, hence
\begin{align*}
\left| F_2\left(\sum_{j<k, \tau^j_n \rightarrow \tau^j_{\infty},
h^j_n=1} \left<\nabla\right>^{-1} T^j_n  \psi^j  \right) -
\sum_{j<k, \tau^j_n \rightarrow \tau^j_{\infty}, h^j_n=1}
F_2\left(\left<\nabla\right>^{-1} T^j_n \psi^j )\right)\right|
=0.
\end{align*}

Now we consider the case $i=1$, Let $\widehat{\psi}^j =
|\nabla|^{-1}\psi^j$ if $h^j_n \rightarrow 0 $, and
$\widehat{\psi}^j =\left<\nabla\right>^{-1}\psi^j$ if $h^j_n \equiv
1$, then we have $\widehat{\psi}^j \in L^6_x$, and
\begin{align*}
& \left| F_1\left(\sum_{j<k, \tau^j_n \rightarrow \tau^j_{\infty}}
\left<\nabla\right>^{-1} T^j_n  \psi^j  \right) - \sum_{j<k,
\tau^j_n \rightarrow \tau^j_{\infty}}
F_1\left(\left<\nabla\right>^{-1} T^j_n
\psi^j )\right)  \right| \\
 & \leq\left| F_1\left(\sum_{j<k, \tau^j_n \rightarrow
\tau^j_{\infty}} \left<\nabla\right>^{-1} T^j_n  \psi^j  \right)
 -  F_1\left(\sum_{j<k, \tau^j_n \rightarrow \tau^j_{\infty}} h^j_n T^j_n \widehat{\psi}^j\right)\right|\\
&\quad +\left| \sum_{j<k, \tau^j_n \rightarrow \tau^j_{\infty}}
F_1\left(\left<\nabla\right>^{-1} T^j_n \psi^j )\right) - \sum_{j<k,
\tau^j_n \rightarrow \tau^j_{\infty}} F_1 \left( h^j_n
T^j_n\widehat{ \psi}^j\right) \right| \\
&\quad + \left| F_1\left(\sum_{j<k, \tau^j_n \rightarrow \tau^j_{\infty}}
h^j_n T^j_n \widehat{\psi}^j\right)- \sum_{j<k, \tau^j_n \rightarrow
\tau^j_{ \infty}} F_1 \left( h^j_n T^j_n \widehat{\psi}^j\right)\right|.
\end{align*}

Since
\begin{align*}
  \big\|\left<\nabla\right>^{-1}T^j_n\psi^j - h^j_n T^j_n
\widehat{\psi}^j\big\|_{L^6_x}  = &   \begin{cases}
\big\|\left<\nabla\right>^{-1}T^j_n\psi^j - h^j_n T^j_n
|\nabla|^{-1}\psi^j \big\|_{L^6_x}  \quad \text{if}\;\; h^j_n \rightarrow 0\\
\big\|\left<\nabla\right>^{-1}T^j_n\psi^j -  h^j_nT^j_n
\left<\nabla\right>^{-1}\psi^j \big\|_{L^6_x}  \quad \text{if}\;\;
h^j_n \equiv 1
\end{cases}  \\
 = &   \begin{cases}
\big\|(\left<\nabla\right>^j_n)^{-1} \psi^j -
|\nabla|^{-1}\psi^j \big\|_{L^6_x}  \quad \text{if}\;\; h^j_n \rightarrow 0\\
0 \quad \text{if}\;\;
h^j_n \equiv 1
\end{cases}  \\
 &  \longrightarrow 0, \quad \text{as}\;\; n\rightarrow +\infty,
\end{align*}
which shows that
\begin{align*}
\left| F_1\left(\sum_{j<k, \tau^j_n \rightarrow \tau^j_{\infty}}
\left<\nabla\right>^{-1} T^j_n  \psi^j  \right)
 -  F_1\left(\sum_{j<k, \tau^j_n \rightarrow \tau^j_{\infty}} h^j_n T^j_n \widehat{\psi}^j\right)\right|\longrightarrow 0, \\
 \left| \sum_{j<k, \tau^j_n \rightarrow \tau^j_{\infty}}
F_1\left(\left<\nabla\right>^{-1} T^j_n \psi^j )\right) - \sum_{j<k,
\tau^j_n \rightarrow \tau^j_{\infty}} F_1 \left( h^j_n
T^j_n\widehat{ \psi}^j\right) \right| \longrightarrow 0.
\end{align*}
We further replace each $\widehat{\psi}^j$ by the non-overlap terms $\widetilde{\psi}^j_n$ with each other
\begin{align*}
\widetilde{\psi}^j_n = \widehat{\psi}^j \times
\begin{cases} 0; \quad
\exists\; l<j, \;\text{such that}\; h^l_n < h^j_n \; \; \text{and}\;\;
\frac{x }{h^{j,l}_n} \in \text{supp}\; {\widehat{\psi}^l}, \\
1; \quad \text{otherwise},
\end{cases}
\end{align*}
where $  h^{j,l}_n$ is determined by
\eqref{combine:parameters}. By \eqref{orth:I}, we know that $h^{j,l}_n \rightarrow 0$,
therefore as $n\rightarrow +\infty$ \begin{align*}
\widetilde{\psi}^j_n \rightarrow \widehat{\psi}^j, & \quad a. e.\;
x\in \R^3, \quad \text{and}\quad   \widetilde{\psi}^j_n \rightarrow
\widehat{\psi}^j,
 \quad \text{in}\;\; L^{6}_x,
\end{align*}
which implies that
\begin{align*}
 \left| F_1\left(\sum_{j<k, \tau^j_n \rightarrow \tau^j_{\infty}}
h^j_n T^j_n \widehat{\psi}^j\right) - F_1\left(\sum_{j<k, \tau^j_n
\rightarrow \tau^j_{\infty}} h^j_n T^j_n \widetilde{\psi}^j_n\right)\right|\longrightarrow 0, \\
 \left| \sum_{j<k, \tau^j_n \rightarrow \tau^j_{\infty}} F_1 \left( h^j_n
T^j_n \widehat{\psi}^j\right) - \sum_{j<k, \tau^j_n \rightarrow
\tau^j_{\infty}} F_1 \left( h^j_n T^j_n \widetilde{\psi}^j_n\right)
\right| \longrightarrow 0.
\end{align*}
On the other hand, by the support property of
$\widetilde{\psi}^j_n$, we know that
\begin{align*}
  F_1\left(\sum_{j<k, \tau^j_n \rightarrow \tau^j_{\infty}} h^j_n T^j_n
\widetilde{\psi}^j_n\right) = \sum_{j<k, \tau^j_n \rightarrow
\tau^j_{\infty}} F_1 \left( h^j_n T^j_n \widetilde{\psi}^j_n\right).
\end{align*}
Therefore, we have
\begin{align*}
& \left| F_1\left(\sum_{j<k, \tau^j_n \rightarrow \tau^j_{\infty}}
h^j_n T^j_n \widehat{\psi}^j\right)- \sum_{j<k, \tau^j_n \rightarrow
\tau^j_{\infty}} F_1
\left( h^j_n T^j_n \widehat{\psi}^j\right)\right| \\
 &\leq \left| F_1\left(\sum_{j<k, \tau^j_n \rightarrow
\tau^j_{\infty}} h^j_n T^j_n \widehat{\psi}^j\right) -
F_1\left(\sum_{j<k, \tau^j_n
\rightarrow \tau^j_{\infty}} h^j_n T^j_n \widetilde{\psi}^j_n\right)\right|\\
&\quad + \left| \sum_{j<k, \tau^j_n \rightarrow \tau^j_{\infty}} F_1
\left( h^j_n T^j_n \widehat{\psi}^j\right) - \sum_{j<k, \tau^j_n
\rightarrow \tau^j_{\infty}} F_1 \left( h^j_n T^j_n
\widetilde{\psi}^j_n\right) \right|
  \longrightarrow 0.
\end{align*}
This completes the proof.
\end{proof}

\begin{lemma}\label{L:close of K}
Let $k\in \N$ and radial functions $\varphi_0, \ldots, \varphi_k \in H^1(\R^3)$, $m$ be determined by \eqref{minimization}.
Assume that there exist some $\delta$, $\varepsilon>0$ with $ 4
\varepsilon  < 3 \delta$ such that
\begin{align*}
\sum^k_{j=0}E(\varphi_j) - \varepsilon \leq
E\left(\sum^k_{j=0}\varphi_j\right) < m-\delta,\;\; \text{and}\;\;
 -\varepsilon \leq
K\left(\sum^k_{j=0}\varphi_j\right) \leq
\sum^k_{j=0}K(\varphi_j)+\varepsilon.
\end{align*}
Then $\varphi_j \in \KKK^+$ for all $j=0, \ldots, k$.
\end{lemma}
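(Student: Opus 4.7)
The key identity to exploit is $E = H + \frac{1}{6}K$, which follows directly from the definitions of $E$, $H$, and $K$ (one verifies it by a simple algebraic comparison of integrands). Since $H(\varphi) = \frac{1}{6}\int(|\nabla\varphi|^2 + |\varphi|^6)\,dx$ is manifestly non-negative on all of $H^1$, this identity converts information about $E$ and $K$ into information about $H$, which is then controlled through the variational characterization in Lemma \ref{minimization:H}. The plan is to argue by contradiction, first ruling out $K(\varphi_j)<0$ using the characterization $m = \inf\{H(\varphi) : K(\varphi)<0\}$, and then deducing $E(\varphi_j)<m$ as a direct consequence of near-additivity.

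\textbf{Step 1: Rule out negative $K(\varphi_j)$.} Suppose for contradiction that $K(\varphi_{j_0}) < 0$ for some index $j_0$. In particular $\varphi_{j_0}\neq 0$, so Lemma~\ref{minimization:H} (the characterization of $m$ over the region $K<0$) yields $H(\varphi_{j_0}) \geq m$. From the hypotheses, the near-super-additivity for $K$ gives $\sum_j K(\varphi_j) \geq K(\sum_j \varphi_j) - \varepsilon \geq -2\varepsilon$. Using non-negativity of each $H(\varphi_j)$,
\begin{align*}
\sum_{j=0}^k E(\varphi_j) = \sum_{j=0}^k H(\varphi_j) + \frac{1}{6}\sum_{j=0}^k K(\varphi_j) \;\geq\; H(\varphi_{j_0}) + \frac{1}{6}(-2\varepsilon) \;\geq\; m - \frac{\varepsilon}{3}.
\end{align*}
On the other hand, the near-super-additivity for $E$ combined with the upper bound $E(\sum_j\varphi_j) < m - \delta$ yields $\sum_j E(\varphi_j) \leq E(\sum_j \varphi_j) + \varepsilon < m - \delta + \varepsilon$. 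Putting these together produces $m - \varepsilon/3 < m - \delta + \varepsilon$, i.e.\ $3\delta < 4\varepsilon$, which contradicts the standing hypothesis $4\varepsilon < 3\delta$. Therefore $K(\varphi_j) \geq 0$ for every $j$.

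\textbf{Step 2: Deduce $E(\varphi_j) < m$.} Since $K(\varphi_j)\geq 0$, the identity $E(\varphi_j) = H(\varphi_j) + \frac{1}{6}K(\varphi_j)$ combined with $H\geq 0$ gives $E(\varphi_j) \geq 0$ for every $j$. Consequently each individual energy is bounded by the sum:
\begin{align*}
E(\varphi_j) \;\leq\; \sum_{i=0}^k E(\varphi_i) \;<\; m - \delta + \varepsilon \;<\; m,
\end{align*}
where the final strict inequality uses $\varepsilon < \delta$, which follows from $4\varepsilon < 3\delta$. Together with radiality of each $\varphi_j$ (inherited from the hypothesis) and $K(\varphi_j)\geq 0$, this places every $\varphi_j$ in $\mathcal{K}^+$.

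The main conceptual point, rather than an obstacle, is the identity $E = H + \frac{1}{6}K$: it lets one exchange the loose super-additivity hypotheses on $E$ and $K$ for a meaningful lower bound on $\sum H(\varphi_j)$, which is the only quantity accessible to Lemma~\ref{minimization:H}. Once that reduction is in place, the rest is a clean numerical argument in which the explicit constants $4/3$ and $3/4$ arise naturally and match the hypothesis $4\varepsilon<3\delta$ exactly.
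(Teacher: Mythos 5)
Your proof is correct and follows essentially the same route as the paper: a contradiction argument using the identity $E = H + \tfrac16 K$, the nonnegativity of $H$, and the characterization $m=\inf\{H(\varphi):\varphi\neq 0,\ K(\varphi)<0\}$ from Lemma \ref{minimization:H}, with the same constants leading to the contradiction with $4\varepsilon<3\delta$. The only difference is a cosmetic rearrangement (you bound $\sum_j E(\varphi_j)$ from both sides, while the paper chains the inequalities through $\sum_j H(\varphi_j)$), and your Step 2 is identical to the paper's conclusion.
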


\begin{proof}Suppose that $K(\varphi_l) < 0 $ for some $l$. Then by Lemma \ref{minimization:H}, we have
\begin{align*}
H(\varphi_l) \geq \inf\left\{H(\varphi) \; | \;   \varphi \in
H^1(\R^3),\; \varphi \not=0,\;  K(\varphi) \leq 0\right\} = m.
\end{align*}
By the nonnegativity of $H(\varphi_j)$ for $j\geq 0$, we have
\begin{align*}
m \leq&  H(\varphi_l) \leq \sum^{k}_{j=0} H(\varphi_j)  =
\sum^{k}_{j=0}\left( E(\varphi_j)- \frac16 K(\varphi_j) \right)\\
\leq & E\left(\sum^k_{j=0}\varphi_j\right) + \varepsilon -\frac16
K\left(\sum^k_{j=0}\varphi_j\right)  + \frac16 \varepsilon \\
\leq & m -\delta + \varepsilon +  \frac13 \varepsilon < m.
\end{align*}
It is a contradiction. Hence for any $j\in \{0, \ldots, k\}$, we
have
\begin{align*}
K(\varphi_j) \geq 0,
\end{align*}
which implies that
\begin{align*}
E(\varphi_j) = H(\varphi_j)+ \frac16 K(\varphi_j) \geq 0,
\end{align*}
and
\begin{align*}
E(\varphi_j) \leq \sum^k_{i=0}E(\varphi_i) <m-\delta + \varepsilon <
m,
\end{align*}
which means that $\varphi_j \in \KKK^+$ for all $j$.
\end{proof}

According to the above results, we conclude as following.
\begin{proposition}\label{decomp:stable:K}
Let $\overrightarrow{v}_n(t,x) $ be a sequence of the radial solutions of the free Schr\"{o}dinger equation satisfying
\begin{align*}
v_n(0)\in \KKK^+ \;\; \text{and}\;\;  E (v_n(0))<m.
\end{align*}

Let
\begin{align*}
\overrightarrow{v}_n(t,x) = \sum^{k-1}_{j=0}
\overrightarrow{v}^j_n(t,x) + \overrightarrow{w}^k_n(t,x),
\end{align*}
be the linear profile decomposition given by Proposition
\ref{L:linear profile}. Then for large $n$ and all $j<K$, we have
\begin{align*}
v^j_n(0)\in \KKK^+,\;\;  \;\; w^K_n(0) \in \KKK^+,
\end{align*}
 and
\begin{align*}
\lim_{k\rightarrow K} \varlimsup_{n\rightarrow +\infty} \left|
M(v_n(0)) - \sum_{j<k} M(v^j_n(0)) - M(w^k_n(0))\right| =0, \\
\lim_{k\rightarrow K} \varlimsup_{n\rightarrow +\infty} \left|
E(v_n(0)) - \sum_{j<k}E(v^j_n(0)) - E(w^k_n(0))\right| =0, \\
\lim_{k\rightarrow K} \varlimsup_{n\rightarrow +\infty} \left|
K(v_n(0)) - \sum_{j<k} K(v^j_n(0)) -K(w^k_n(0))\right| =0.
\end{align*}
Moreover for all $j<K$, we have
\begin{align*}
0 \leq  \varliminf_{ n\rightarrow +\infty} E(v^j_n(0)) \leq
\varlimsup_{n\rightarrow +\infty} E(v^j_n(0)) \leq
\varlimsup_{n\rightarrow +\infty} E(v_n(0)),
\end{align*}
where the last inequality becomes equality only if $K=1$ and $w^1_n
\rightarrow 0$ in $L^{\infty}_t \dot H^1_x$.
\end{proposition}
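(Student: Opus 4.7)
The plan is to (i) assemble the $M$, $E$, $K$ orthogonalities by combining the quadratic orthogonality from Remark \ref{R:free energy orthogonality} with the $L^4,L^6$ orthogonalities from the previous lemma; (ii) bootstrap $\KKK^+$-membership from $v_n(0)$ to every profile and remainder via Lemma \ref{L:close of K}; and (iii) read off the energy monotonicity and treat the equality case using the lower bound $E\geq \tfrac16(\|\nabla\cdot\|_{L^2}^2+\|\cdot\|_{L^6}^6)$ available on $\KKK^+$ (Lemma \ref{free-energ-equiva}).

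For the orthogonalities I would first dispatch the quadratic pieces: applying Remark \ref{R:free energy orthogonality} to $\mu=\langle\nabla\rangle^{-1}\in\mathcal{MC}$ yields the $L^2$ orthogonality of $v_n(0)$, and applying it to $\mu=|\nabla|\langle\nabla\rangle^{-1}\in\mathcal{MC}$ yields the analogous one for $\|\nabla\cdot\|_{L^2}^2$. The cubic and quintic pieces $F_1,F_2$ are exactly the content of the previous lemma at $t=0$. Assembling these in the definition of $M$, $E$, $K$ gives the announced identities, each with a defect satisfying $\lim_{k\to K}\varlimsup_n|\cdot|=0$.

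Next, by extracting a subsequence I assume $E(v_n(0))\to E_\infty$, and I take the nondegenerate case $E_\infty<m$ (the borderline case $E_\infty=m$ is handled by a diagonal $k=k_n\to K$ and is not used in the subsequent scattering argument). Set $\delta:=(m-E_\infty)/4>0$, so $E(v_n(0))<m-\delta$ for large $n$. Let $\varepsilon_{k,n}$ be the orthogonality defect in $E$ and $K$ at indices $(k,n)$; by Step (i) one can first choose $k$ large and then $n$ large to guarantee $\varepsilon_{k,n}<\delta/2$, hence $4\varepsilon_{k,n}<3\delta$. Applied to $(\varphi_0,\ldots,\varphi_k):=(v^0_n(0),\ldots,v^{k-1}_n(0),w^k_n(0))$, the hypotheses of Lemma \ref{L:close of K} are met—note $K(v_n(0))\geq 0\geq -\varepsilon_{k,n}$ because $v_n(0)\in\KKK^+$—so that lemma produces $v^j_n(0)\in\KKK^+$ for every $j<k$ and $w^k_n(0)\in\KKK^+$. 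Since $k$ is arbitrary below $K$, this covers every profile and remainder.

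Finally, Lemma \ref{free-energ-equiva} gives $E(v^j_n(0)),E(w^k_n(0))\geq 0$ for all $j,k$, and the $E$-orthogonality of Step (i) immediately implies $\varlimsup_n E(v^j_n(0))\leq\varlimsup_n E(v_n(0))$. If equality holds at some $j_0$, then along a realizing subsequence every remaining $E(v^j_n(0))$ and every $E(w^k_n(0))$ tends to zero; the $\KKK^+$ lower bound then forces $\|\nabla v^j_n(0)\|_{L^2}\to 0$ for all $j\neq j_0$. A direct Fourier-side computation, using $v^j_n(0)=\langle\nabla\rangle^{-1}e^{-it^j_n\Delta}T^j_n\varphi^j$ together with the unitarity of $e^{it\Delta}$ and $T^j_n$ on $L^2$, gives
\[
\|\nabla v^j_n(0)\|_{L^2}^2=\int_{\R^3}\frac{|\eta|^2}{(h^j_n)^2+|\eta|^2}\,|\widehat{\varphi^j}(\eta)|^2\,d\eta,
\]
whose limit is strictly positive whenever $\varphi^j\neq 0$ (since $h^j_n\in(0,1]$; monotone convergence handles $h^j_n\to 0$). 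Hence all nondistinguished profiles must be trivial, forcing $K=1$ and $j_0=0$; the same identity for $w^1_n(0)$, combined with $L^2$-conservation along the free flow, gives $\|w^1_n\|_{L^\infty_t\dot H^1_x}\to 0$. The only genuinely delicate step is this last conversion from vanishing energy to vanishing of the abstract profile $\varphi^j$, which hinges on the explicit scaling identity above and the constraint $h^j_n\in(0,1]$.
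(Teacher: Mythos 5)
Your proposal is correct and takes essentially the same route the paper intends: the paper states this proposition without a separate proof because it follows by combining Remark \ref{R:free energy orthogonality} (with the multipliers $\mu=\langle\nabla\rangle^{-1}$ and $\mu=|\nabla|\langle\nabla\rangle^{-1}$) with the $F_1,F_2$-orthogonality lemma, Lemma \ref{L:close of K}, and Lemma \ref{free-energ-equiva}, which is exactly your Steps (i)--(iii). Your extra details---the Fourier-side scaling identity showing that vanishing $\dot H^1$ norm forces $\varphi^j=0$ in the equality case, and the observation that the strict gap $E(v_n(0))\le m-\delta$ needed to invoke Lemma \ref{L:close of K} is what the later application actually provides---merely flesh out what the paper leaves implicit.
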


\subsection{Nonlinear profile decomposition} After the linear profile decomposition of a sequence of initial data in
the last subsection, we now show the nonlinear profile decomposition of a sequence
of radial solutions of \eqref{NLS} with the same initial data in the energy
space $H^1(\R^3)$. First we introduce some notation
\begin{align*}
\left<\nabla\right>^j_n = \sqrt{\left(h^j_n\right)^2 - \Delta}, \;\; \left<\nabla\right>^j_{\infty} = \sqrt{\left(h^j_{\infty}\right)^2 - \Delta}\; .
\end{align*}

Now let $v_n(t,x)$ be a sequence of radial solutions for the free
Schr\"{o}dinger equation with initial data in $\KKK^+$, that is,
$v_n \in H^1(\R^3)$ is radial and
\begin{align*}
\left(i\partial_t + \Delta\right) v_n =0,\quad v_n(0)\in \KKK^+.
\end{align*}

Let
\begin{align*}
\overrightarrow{v}_n(t,x) = \left<\nabla\right> v_n(t,x),
\end{align*}
then by Proposition \ref{L:linear profile}, we have a sequence of the radial,
free concentrating wave $\overrightarrow{v}^j_n(t,x)$ with
$\overrightarrow{v}^j_n (t^j_n) = T^j_n \varphi^j$, $v^j_n(0)\in
\KKK^+$ for $j=0, \ldots, K$, such that
\begin{align*}
\overrightarrow{v}_n(t,x) = & \sum^{k-1}_{j=0}
\overrightarrow{v}^j_n(t,x) + \overrightarrow{w}^k_n(t,x)  =
\sum^{k-1}_{j=0} e^{i(t-t^j_n)\Delta}T^j_n \varphi^j
  + \overrightarrow{w}^k_n \\
  = &  \sum^{k-1}_{j=0} T^j_n e^{i\left(\frac{t-t^j_n}{(h^j_n)^2}\right)\Delta} \varphi^j
  + \overrightarrow{w}^k_n .
\end{align*}

Now for any concentrating wave $\overrightarrow{v}^j_n $, $j=0,
\ldots, K$, we undo the group action, i.e., the scaling  transformation $T^j_n$,  to look for the
linear profile $V^j$. Let
\begin{align*}
\overrightarrow{v}^j_n(t,x) = &  T^j_n
\overrightarrow{V}^j\left(\frac{t-t^j_n}{(h^j_n)^2}\right),
\end{align*}
then we have
\begin{align*}
 \left(i\partial_t + \Delta\right) \overrightarrow{V}^j =0, \quad
  \overrightarrow{V}^j(0)=\varphi^j.
\end{align*}

Now let $u^j_n(t,x)$ be the nonlinear solution of \eqref{NLS} with
initial data $v^j_n(0)$, that is
\begin{align*}
 \left(i\partial_t + \Delta\right) \overrightarrow{u}^j_n(t,x) = & \left< \nabla\right> f_1 (\left<\nabla\right> ^{-1} \overrightarrow{u}^j_n )+ \left< \nabla\right> f_2 (\left<\nabla\right> ^{-1} \overrightarrow{u}^j_n ),\\
  \quad \overrightarrow{u}^j_n (0)=& \overrightarrow{v}^j_n (0)= T^j_n\overrightarrow{V}^j(\tau^j_n),\quad  u^j_n (0)\in
  \KKK^+,
\end{align*}
where $\tau^j_n = - t^j_n/ (h^j_n)^2$. In order to look for the nonlinear
profile $\overrightarrow{U}^j_{\infty}$ associated to the radial, free concentrating wave
$\left(\overrightarrow{v}^j_n;\; h^j_n, t^j_n \right)$, we
also need undo the group action. We denote
\begin{align*}
   \overrightarrow{u}^j_n(t,x) =& T^j_n
\overrightarrow{U}^j_n\left(\frac{t-t^j_n}{(h^j_n)^2}\right),
\end{align*}
then we have
\begin{align*}
 \left(i\partial_t + \Delta\right) \overrightarrow{U}^j_n = &
\left( \left<\nabla\right>^{j}_{n}\right) f_1 \left( \left(
\left<\nabla\right>^{j}_{n}\right) ^{-1} \overrightarrow{U}^j_n
\right) + h^j_n \cdot \left( \left<\nabla\right>^{j}_{n}\right) f_2
\left( \left( \left<\nabla\right>^{j}_{n}\right) ^{-1}
\overrightarrow{U}^j_n \right),\\
\overrightarrow{U}^j_n(\tau^j_n) =& \overrightarrow{V}^j(\tau^j_n).
\end{align*}

Up to a subsequence, we may assume that there exist
$h^j_{\infty}\in \{0, 1\}$ and $\tau^j_{\infty} \in [-\infty, \infty]$ for
every $j=\{0, \ldots, K\}$, such that
\begin{align*}
h^j_n \rightarrow   \; h^j_{\infty} ,\;\; \text{and}\;\;  \tau^j_n
\rightarrow \; \tau^j_{\infty}  .
\end{align*}
As $n\rightarrow +\infty$, the limit equation of $\overrightarrow{U}^j_n$
is given by
\begin{align*}
 \left(i\partial_t + \Delta\right) \overrightarrow{U}^j_{\infty} = &
\left( \left<\nabla\right>^{j}_{\infty}\right) f_1 \left( \left(
\left<\nabla\right>^{j}_{\infty}\right) ^{-1}
\overrightarrow{U}^j_{\infty} \right) + h^j_{\infty} \cdot \left(
\left<\nabla\right>^{j}_{\infty}\right) f_2 \left( \left(
\left<\nabla\right>^{j}_{\infty}\right) ^{-1}
\overrightarrow{U}^j_{\infty} \right),\\
\overrightarrow{U}^j_\infty(\tau^j_\infty) =&
\overrightarrow{V}^j(\tau^j_\infty) \in L^2(\R^3).
\end{align*}

Let
\begin{align*}
\widehat{U}^j_{\infty}:=
\left(\left<\nabla\right>^j_{\infty}\right)^{-1}\overrightarrow{U}^j_{\infty},
\end{align*}
then
\begin{align}\label{nonlinear profile}
 \left(i\partial_t + \Delta\right) \widehat{U}^j_{\infty} = &
  f_1 \left(\widehat{U}^j_{\infty} \right) + h^j_{\infty} \cdot
  f_2 \left(\widehat{U}^j_{\infty} \right),\\
 \widehat{U}^j_{\infty}(\tau^j_\infty) =& \left(\left<\nabla\right>^j_{\infty}\right)^{-1}
\overrightarrow{V}^j(\tau^j_\infty).
\end{align}

The unique existence of a local radial solution
$\overrightarrow{U}^j_{\infty}$ around $\tau^j_{\infty}$ is known in
all cases, including $h^j_{\infty}=0$ and $\tau^j_{\infty}=\pm
\infty$. $\overrightarrow{U}^j_{\infty} $ on the maximal existence
interval is called the nonlinear profile associated with the radial, free
concentrating wave $\left(\overrightarrow{v}^j_n;\; h^j_n, t^j_n\right)$.

The nonlinear concentrating wave $u^j_{(n)}$ associated with
$\left(\overrightarrow{v}^j_n;\; h^j_n, t^j_n\right)$ is
defined by
\begin{align*}
\overrightarrow{u}^j_{(n)}(t,x)=T^j_n
\overrightarrow{U}^j_{\infty}\left(\frac{t-t^j_n}{(h^j_n)^2}\right),
\end{align*}
then we have
\begin{align*}
 \left(i\partial_t + \Delta\right) \overrightarrow{u}^j_{(n)} = &
\left( \sqrt{|\nabla|^2 +
\left(\frac{h^j_{\infty}}{h^j_n}\right)^2}\right) f_1 \left( \left(
\sqrt{|\nabla|^2 +
\left(\frac{h^j_{\infty}}{h^j_n}\right)^2}\right)^{-1}
\overrightarrow{u}^j_{(n)} \right) \\
& +  \frac{h^j_{\infty}}{h^j_n} \cdot  \left( \sqrt{|\nabla|^2 +
\left(\frac{h^j_{\infty}}{h^j_n}\right)^2}\right) f_2 \left( \left(
\sqrt{|\nabla|^2 +
\left(\frac{h^j_{\infty}}{h^j_n}\right)^2}\right)^{-1}
\overrightarrow{u}^j_{(n)} \right) \\
=& \left<\nabla\right>^j_{\infty} f_1 \left( \left(
\left<\nabla\right>^j_{\infty}\right)^{-1}
\overrightarrow{u}^j_{(n)} \right) +  h^j_{\infty} \cdot
\left<\nabla\right>^j_{\infty} f_2 \left( \left(
\left<\nabla\right>^j_{\infty} \right)^{-1}
\overrightarrow{u}^j_{(n)} \right), \\
 \overrightarrow{u}^j_{(n)}(0) =&
T^j_n\overrightarrow{U}^j_{\infty}(\tau^j_n),
\end{align*}
which implies that
\begin{align*}
\big\|\overrightarrow{u}^j_{(n)}(0) - \overrightarrow{u}^j_{n}(0)
\big\|_{L^2} = & \big\|T^j_n\overrightarrow{U}^j_{\infty}(\tau^j_n)
- T^j_n\overrightarrow{V}^j(\tau^j_n) \big\|_{L^2}
=\big\|
\overrightarrow{U}^j_{\infty}(\tau^j_n) -
 \overrightarrow{V}^j(\tau^j_n) \big\|_{L^2}
\\
\leq & \big\| \overrightarrow{U}^j_{\infty}(\tau^j_n) -
\overrightarrow{U}^j_{\infty}(\tau^j_{\infty}) \big\|_{L^2} + \big\|
 \overrightarrow{V}^j(\tau^j_n)- \overrightarrow{V}^j(\tau^j_{\infty})
 \big\|_{L^2}   \rightarrow  0.
\end{align*}

We denote
\begin{align*}
\overrightarrow{u}^j_{(n)} = \left<\nabla \right>u^j_{(n)}.
\end{align*}
If $h^j_{\infty}=1$, we have $h^j_n\equiv1$, then $ u^j_{(n)} \in
H^1(\R^3)$ is radial and satisfies
\begin{align*}
 \left(i\partial_t + \Delta\right) u^j_{(n)} = f_1(u^j_{(n)}) + f_2
 (u^j_{(n)}).
\end{align*}
If $h^j_{\infty}=0$, then $ u^j_{(n)} \in H^1(\R^3)$ is radial and satisfies
\begin{align*}
 \left(i\partial_t + \Delta\right) u^j_{(n)} =
 \frac{|\nabla|}{\left<\nabla\right>}f_1\left(\frac{\left<\nabla\right>}{|\nabla|}u^j_{(n)}\right).
\end{align*}

Let $u_n$ be a sequence of (local) radial solutions of \eqref{NLS} with
initial data in $\KKK^+$ at $t=0$, and let $v_n$ be the sequence of
the radial, free solutions with the same initial data. We consider the
linear profile decomposition given by Proposition \ref{L:linear
profile}

\begin{align*}
\overrightarrow{v}_n(t,x) = \sum^{k-1}_{j=0}
\overrightarrow{v}^j_n(t,x) +
\overrightarrow{w}^k_n(t,x),\quad\overrightarrow{v}^j_n (t^j_n) =
T^j_n \varphi^j, \quad  v^j_n(0)\in \KKK^+.
\end{align*}
With each free concentrating wave $\{\overrightarrow{v}^j_n\}_{n\in
\N}$, we associate the nonlinear concentrating wave
$\{\overrightarrow{u}^{j}_{(n)}\}_{n\in \N}$. A nonlinear profile
decomposition of $u_n$ is given by
 \begin{align}\label{profile:nonlinear}
\overrightarrow{u}^{<k}_{(n)}(t,x):=\sum^{k-1}_{j=0}\overrightarrow{u}^j_{(n)}(t,x)
= \sum^{k-1}_{j=0} T^j_n
\overrightarrow{U}^j_{\infty}\left(\frac{t-t^j_n}{(h^j_n)^2}\right).
\end{align}

Since the smallness condition \eqref{small:w weak topology} and the orthogonality condition \eqref{orth:I} ensure that
every nonlinear concentrating wave and the remainder interacts weakly with the others, we will show that $\overrightarrow{u}^{<k}_{(n)}+
\overrightarrow{w}^k_n$ is a good approximation for
$\overrightarrow{u}_n$ provided that each nonlinear profile has the finite
global Strichartz norm.

Now we define the Strichartz norms for the
nonlinear profile decomposition. Let $ST(I)$ and $ST^*(I)$ be the function spaces on $I \times \R^3$
defined as Section \ref{S:long time perturbation}
\begin{align*}
ST(I):=   \left(L^{10}_t \dot B^{1/3}_{90/19, 2} \cap L^{12}_tL^9_x \cap L^6_t\dot B^{1/2}_{18/7,2}\cap
L^{5}_{t,x}\right)&(I\times \R^3),\\
ST^*(I):=   \left( L^2_t \dot B^{1/3}_{18/11,2}  \cap
L^2_t\dot B^{1/2}_{6/5,2}\right)(I\times \R^3)&.
\end{align*}
The Strichartz norm for the nonlinear profile $\widehat{U}^j_{\infty}$ depends on the scaling
$h^j_{\infty}$.
\begin{align*}
ST^j_{\infty}(I):=\begin{cases} ST(I), \quad & \text{for}\; h^j_{\infty}=1, \\
\left(L^{10}_t \dot B^{1/3}_{90/19, 2} \cap L^{12}_tL^9_x \right)(I\times \R^3), \quad & \text{for}\;
h^j_{\infty}=0.
\end{cases}
\end{align*}

\begin{lemma} \label{orth:III}
In the nonlinear profile decomposition
\eqref{profile:nonlinear}. Suppose that for each $j<K$, we have
\begin{align*}
\big\|\widehat{U}^j_{\infty}\big\|_{ST^j_{\infty}(\R)}
+\big\|\overrightarrow{U}^{j}_{\infty}\big\|_{L^{\infty}_tL^2_x(\R^3)}<\infty.
\end{align*}
Then for any finite interval $I$, any $j<K$
and any $k\leq K$, we have
\begin{align}
\varlimsup_{n\rightarrow +\infty}  \big\|u^j_{(n)}\big\|_{ST(I)}
\lesssim & \big\|\widehat{U}^j_{\infty}\big\|_{ST^j_{\infty}(\R)},\label{approximation:ST control:single} \\
\varlimsup_{n\rightarrow +\infty}
\big\|u^{<k}_{(n)}\big\|^2_{ST(I)}\lesssim
&\varlimsup_{n\rightarrow +\infty}  \sum_{j<k}
\big\|u^j_{(n)}\big\|^2_{ST(I)}, \label{approximation:ST control:all}
\end{align}
where the implicit constants do not depend on $I, j$ or $k$. We
also have
\begin{align}
\lim_{n\rightarrow +\infty} \left\| f_1\left(u^{<k}_{(n)}\right)   - \sum_{j<k}
\frac{\left<\nabla\right>^j_{\infty}}{\left<\nabla\right>}  f_1 \left(
\frac{\left<\nabla\right>}{\left<\nabla\right>^j_{\infty}}  u^j_{(n)} \right)  \right\|_{ST^*(I)}=0, \label{orth:ST norm:I}\\
\lim_{n\rightarrow +\infty} \left\| f_2\left(u^{<k}_{(n)}\right)   - \sum_{j<k}  h^j_{\infty}
\frac{\left<\nabla\right>^j_{\infty}}{\left<\nabla\right>}  f_2 \left(
\frac{\left<\nabla\right>}{\left<\nabla\right>^j_{\infty}}  u^j_{(n)} \right)  \right\|_{ST^*(I)}=0.  \label{orth:ST norm:II}
\end{align}
\end{lemma}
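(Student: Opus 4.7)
The plan is to reduce all three estimates to the scaling behavior of the $ST(I)$-norms combined with the asymptotic parameter orthogonality \eqref{orth:I}. Throughout, I freely use the explicit formula $\overrightarrow{u}^j_{(n)}(t,x) = T^j_n \overrightarrow{U}^j_\infty\bigl((t-t^j_n)/(h^j_n)^2\bigr)$ together with $\widehat{U}^j_\infty = (\langle\nabla\rangle^j_\infty)^{-1}\overrightarrow{U}^j_\infty$, and the fact that $u^j_{(n)} = \langle\nabla\rangle^{-1}\overrightarrow{u}^j_{(n)}$.

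For the single-profile bound \eqref{approximation:ST control:single}, I would insert this representation into each component of $ST(I)$ and change variables $x=h^j_n y$, $t=t^j_n+(h^j_n)^2\sigma$. The $\dot H^1$-admissible pieces $L^{10}_t\dot B^{1/3}_{90/19,2}\cap L^{12}_tL^9_x$ are invariant under this transformation up to a factor that is controlled by comparing the multipliers $\langle\nabla\rangle^{-1}$ and $(\langle\nabla\rangle^j_\infty)^{-1}$, while $L^6_t\dot B^{1/2}_{18/7,2}\cap L^5_{t,x}$ acquires a $(h^j_n)^{1/2}$ factor. Since $h^j_n\in(0,1]$ this gives uniform control by $\|\widehat{U}^j_\infty\|_{ST^j_\infty(\R)}$; when $h^j_\infty=0$ the $\dot H^{1/2}$-admissible pieces vanish in the limit, which is why $ST^j_\infty$ retains only the $\dot H^1$-admissible norms in that case.

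For the diagonal summation \eqref{approximation:ST control:all}, I would expand each component of $ST(I)$ and bound the squared norm of $u^{<k}_{(n)}$ by $\sum_{j<k}\|u^j_{(n)}\|^2_{ST(I)}$ plus off-diagonal pairings. After change of variables through $T^j_n$, each such pairing reduces to an inner product of the form $\bigl\langle \widehat{U}^j_\infty(\sigma),\; S^{j,l}_n \widehat{U}^l_\infty(\cdot)\bigr\rangle$ in a suitable $L^{q'}_tL^{r'}_x$ space, with the operators $S^{j,l}_n$ of Proposition \ref{L:linear profile}. Orthogonality \eqref{orth:I} forces $S^{j,l}_n\rightharpoonup 0$, and approximating each $\widehat{U}^j_\infty$ by Schwartz functions in $ST^j_\infty$ upgrades this to strong vanishing of $S^{j,l}_n\widehat{U}^l_\infty$ on any finite slab; hence the off-diagonal contributions vanish as $n\to\infty$.

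For the nonlinear orthogonality \eqref{orth:ST norm:I}–\eqref{orth:ST norm:II}, I would use the pointwise expansion
\[
\Bigl|\sum_{j<k}u^j_{(n)}\Bigr|^p\Bigl(\sum_{j<k}u^j_{(n)}\Bigr) = \sum_{j<k}|u^j_{(n)}|^p u^j_{(n)} + \mathcal{R}^{(p)}_{k,n},
\]
where $\mathcal{R}^{(p)}_{k,n}$ collects mixed products involving at least two distinct indices. Each factor in $\mathcal{R}^{(p)}_{k,n}$ is handled by the same Besov-type nonlinear estimates recorded in the proof of Theorem \ref{lwp} and a Hölder splitting that isolates one mixed pairing; the orthogonality argument of the previous step then yields $\|\mathcal{R}^{(p)}_{k,n}\|_{ST^*(I)}\to 0$. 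It remains to replace $f_i(u^j_{(n)})$ by $(\langle\nabla\rangle^j_\infty/\langle\nabla\rangle)f_i((\langle\nabla\rangle/\langle\nabla\rangle^j_\infty)u^j_{(n)})$: for $h^j_\infty=1$ this is a tautology, and for $h^j_\infty=0$ one has $\langle\nabla\rangle^j_\infty=|\nabla|$, so after conjugation by $T^j_n$ the multiplier $\langle\nabla\rangle/|\nabla|$ becomes $\sqrt{(h^j_n)^2+|\nabla|^2}/|\nabla|\to 1$ on any fixed Fourier annulus, which a density argument in $\widehat{U}^j_\infty$ turns into vanishing of the discrepancy in $ST^*(I)$. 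The weight $h^j_\infty$ in front of $f_2$ in \eqref{orth:ST norm:II} is produced precisely by this rescaling, encoding that the subcritical perturbation disappears in the $h^j_\infty=0$ limit.

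The main obstacle I expect is the vanishing of the mixed-product remainders $\mathcal{R}^{(p)}_{k,n}$ in the Besov-refined pieces of $ST^*(I)$: the Besov norms couple frequency scales nontrivially, so a direct orthogonality argument at the Besov level is delicate. The cleanest route is to first approximate each $\widehat{U}^j_\infty$ in $ST^j_\infty(\R)$ by Schwartz functions of compact Fourier support, which lets one propagate \eqref{orth:I} scale-by-scale through the Littlewood-Paley decomposition of the mixed products, and then pass to the general profile by density.
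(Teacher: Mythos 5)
Your overall strategy is the paper's: write $u^j_{(n)}=h^j_nT^j_n\frac{\left<\nabla\right>^j_{\infty}}{\left<\nabla\right>^j_n}\widehat{U}^j_{\infty}\big(\frac{t-t^j_n}{(h^j_n)^2}\big)$, exploit the scaling of the various Strichartz components, kill cross terms via \eqref{orth:I} after approximating the profiles by nice (compactly supported) functions, and treat the multiplier discrepancy $\left<\nabla\right>^j_n\to|\nabla|$ (resp. $\left<\nabla\right>$) by dominated convergence plus density; the weight $h^j_{\infty}$ in \eqref{orth:ST norm:II} indeed comes from the subcritical scaling. So the architecture matches.

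There is, however, one step where your justification as written would fail: the single-profile bound \eqref{approximation:ST control:single} in the case $h^j_{\infty}=0$. You claim the $\dot H^{1/2}$-admissible pieces $L^6_t\dot B^{1/2}_{18/7,2}\cap L^5_{t,x}$ "acquire a $(h^j_n)^{1/2}$ factor" and are thus "controlled by $\|\widehat{U}^j_{\infty}\|_{ST^j_{\infty}(\R)}$" — but when $h^j_{\infty}=0$ the space $ST^j_{\infty}$ contains only the $\dot H^1$-admissible norms, and the hypotheses give no global bound on $\|\widehat{U}^j_{\infty}\|_{L^5_{t,x}}$ or $\|\widehat{U}^j_{\infty}\|_{L^6_t\dot B^{1/2}_{18/7,2}}$, so a pure change-of-variables argument has nothing to close against. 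The paper's fix is precisely where the finiteness of $I$ enters: H\"older in time on $I$ passes to $L^{12}_t$-based norms, giving $\|u^j_{(n)}\|_{L^6_t\dot B^{1/2}_{18/7,2}(I)}\lesssim |I|^{1/12}(h^j_n)^{1/3}\|\widehat{U}^j_{\infty}\|_{L^{12}_t\dot B^{5/6}_{18/7,2}}$ and $\|u^j_{(n)}\|_{L^5_{t,x}(I)}\lesssim |I|^{7/60}(h^j_n)^{4/15}\|\widehat{U}^j_{\infty}\|_{L^{12}_t\dot B^{4/15}_{5,2}}$, and these last norms are finite because the assumed bounds in $L^{10}_t\dot B^{1/3}_{90/19,2}\cap L^{12}_tL^9_x\cap L^{\infty}_t\dot H^1$ propagate to them through the profile equation \eqref{nonlinear profile} and interpolation. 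Without this detour your argument never uses that $I$ is finite, yet the vanishing of the subcritical components of small-scale profiles (and likewise the vanishing of the $f_2(u^j_{\left<n\right>})$ terms with $h^j_{\infty}=0$ in \eqref{orth:ST norm:II}, which hides the same issue) genuinely requires it. This gap is repairable, but it is the one nontrivial estimate in the lemma that your sketch does not actually supply.
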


\begin{proof} {\bf Proof of \eqref{approximation:ST control:single}.}  By the definitions of $u^j_{(n)}$ and $\widehat{U}^j_{\infty}$, we know that
\begin{align*}
u^j_{(n)}(t,x)= & \left<\nabla\right>^{-1}\overrightarrow{u}^j_{(n)}(t,x)
=\left<\nabla\right>^{-1}T^j_n\overrightarrow{U}^j_{\infty}\left(\frac{t-t^j_n}{(h^j_n)^2}\right)\\
=&\left<\nabla\right>^{-1}T^j_n \left<\nabla\right>^{j}_{\infty}
\widehat{U}^j_{\infty}\left(\frac{t-t^j_n}{(h^j_n)^2}\right) =
h^j_nT^j_n\frac{\left<\nabla\right>^{j}_\infty}{ \left<\nabla\right>^j_n }
\widehat{U}^j_{\infty}\left(\frac{t-t^j_n}{(h^j_n)^2}\right).
\end{align*}
For the
case $h^j_{\infty}=1$, we have $u^j_{(n)}(t,x)=\widehat{U}^j_{\infty}(t-t^j_n, x)$, hence \eqref{approximation:ST control:single}
 is trivial. For the case $h^j_{\infty}=0$, by the above relation between $u^j_{(n)}$ and
$\widehat{U}^j_{\infty}$, we have
\begin{align*}
\big\|u^j_{(n)}\big\|_{ \left(L^{10}_t \dot B^{1/3}_{90/19, 2} \cap L^{12}_tL^9_x \right)(I\times
\R^3) } \lesssim & \left\|\frac{|\nabla|}{\left<\nabla\right>^j_n}
\widehat{U}^j_{\infty} \right\|_{ \left(L^{10}_t \dot B^{1/3}_{90/19, 2} \cap L^{12}_tL^9_x \right)(\R
\times \R^3) }\\
 \lesssim & \big\|  \widehat{U}^j_{\infty} \big\|_{
\left(L^{10}_t \dot B^{1/3}_{90/19, 2} \cap L^{12}_tL^9_x \right)( \R \times \R^3) },
\end{align*}
and
\begin{align*}
\big\|u^j_{(n)}\big\|_{  L^6_t\dot B^{1/2}_{18/7,2} (I\times \R^3)
}\lesssim & |I|^{\frac{1}{12}}
\big\|u^j_{(n)}\big\|_{L^{12}_t \dot B^{1/2}_{18/7,2} }
\lesssim   |I|^{\frac{1}{12}} (h^j_n)^{\frac{1}{3}}
\big\|\widehat{U}^j_{\infty}\big\|_{L^{12}_t \dot
B^{\frac{5}{6}}_{18/7, 2}} \rightarrow 0, \\
\big\|u^j_{(n)}\big\|_{
L^{5}_{t,x} (I\times \R^3)
}\lesssim & |I|^{\frac{7}{60}}
\big\|u^j_{(n)}\big\|_{L^{12}_t L^{5}_x}
\lesssim   |I|^{\frac{7}{60}} (h^j_n)^{\frac{4}{15}}
\big\|\widehat{U}^j_{\infty}\big\|_{L^{12}_t \dot
B^{\frac{4}{15}}_{5, 2}} \rightarrow 0.
\end{align*}
where we use the fact that the boundedness of $\widehat{U}^j_{\infty} $ in $ L^{10}_t \dot B^{1/3}_{90/19, 2} \cap    L^{12}_tL^9_x  \cap L^{\infty}\dot H^1$
implies its boundedness in $L^{12}_t \dot
B^{\frac{5}{6}}_{18/7, 2}\cap L^{12}_t \dot
B^{\frac{4}{15}}_{5, 2}$ by \eqref{nonlinear profile}.

{\noindent \bf Proof of \eqref{approximation:ST control:all}.}  We estimate the left hand side of  \eqref{approximation:ST control:all} by
\begin{align*}
 \big\|u^{<k}_{(n)}\big\|^2_{ST(I)} = & \left\|\sum_{j<k,
 h^j_{\infty}=1} u^j_{(n)} + \sum_{j<k,  h^j_{\infty}=0} u^j_{(n)}
\right\|^2_{ST(I)} \\
\lesssim & \left\|\sum_{j<k,
 h^j_{\infty}=1} u^j_{(n)}\right\|^2_{ST(I)}  +   \left\| \sum_{j<k,  h^j_{\infty}=0} u^j_{(n)}
\right\|^2_{ST(I)}.
\end{align*}
For the case $h^j_{\infty}=1$. Define $\widehat{U}^j_{\infty, R}$ and
$u^j_{(n), R}$ by
\begin{align*}
\widehat{U}^j_{\infty, R}(t,x)=\chi_{R}(t,x)\widehat{U}^j_{\infty}(t,x),\quad
u^j_{(n), R}(t,x)=T^j_n \widehat{U}^j_{\infty, R}(t-t^j_n),
\end{align*}
where $\chi_R$ is the cut-off function as in Remark \ref{rem:noempty}. Then we have
\begin{align*}
\left\|\sum_{j<k,
 h^j_{\infty}=1} u^j_{(n)}\right\|^2_{ST(I)} \lesssim &  \left\| \sum_{j<k, h^j_{\infty}=1}u^j_{(n), R}\right\|^2_{ST(I)}+  \left\|\sum_{j<k,
 h^j_{\infty}=1} u^j_{(n)} - \sum_{j<k, h^j_{\infty}=1}u^j_{(n), R}\right\|^2_{ST(I)}.
\end{align*}
On one hand, we know that
\begin{align*}
 \left\|\sum_{j<k,
 h^j_{\infty}=1} u^j_{(n)} - \sum_{j<k, h^j_{\infty}=1}u^j_{(n),
 R}\right\|_{ST(I)}  \leq  \sum_{j<k, h^j_{\infty}=1} \big\|(1-\chi_{R}) u^j_{(n)}
 \big\|_{ST(I)} \rightarrow 0,
\end{align*}
as $R\rightarrow +\infty$. On the other hand, by \eqref{orth:I} and the similar orthogonality analysis as in \cite{IbrMN:f:NLKG}, we
know that
\begin{align*}
\varlimsup_{n\rightarrow +\infty} \left\| \sum_{j<k, h^j_{\infty}=1}u^j_{(n), R}\right\|^2_{ST(I)}
 \lesssim \varlimsup_{n\rightarrow +\infty}  \sum_{j<k, h^j_{\infty}=1}  \left\| u^j_{(n),
 R}\right\|^2_{ST(I)}\lesssim \varlimsup_{n\rightarrow +\infty}  \sum_{j<k, h^j_{\infty}=1}  \left\| u^j_{(n) }\right\|^2_{ST(I)}.
\end{align*}

For the case $h^j_{\infty}=0$, On one hand, by $h^j_n\rightarrow 0$, we have
\begin{align*}
\varlimsup_{n\rightarrow+\infty}  \left\| \sum_{j<k, h^j_{\infty}=0}
u^j_{(n)} \right\|_{\left( L^6_t\dot B^{1/2}_{18/7,2}\cap
L^{5}_{t,x}\right)(I\times \R^3)} =0.
\end{align*}
On the other hand, by \eqref{orth:I} and the analogue approximation analysis as in \cite{IbrMN:f:NLKG}, we have
\begin{align*}
\varlimsup_{n\rightarrow +\infty}  \left\| \sum_{j<k, h^j_{\infty}=0}
u^j_{(n)} \right\|^2_{ L^{10}_t \dot B^{1/3}_{90/19, 2}  (I\times
\R^3) } \lesssim & \varlimsup_{n\rightarrow +\infty}  \sum_{j<k, h^j_{\infty}=0} \left\|  u^j_{(n)}
\right\|^2_{  L^{10}_t \dot B^{1/3}_{90/19, 2}  (I\times \R^3) },\\
\varlimsup_{n\rightarrow +\infty}  \left\| \sum_{j<k, h^j_{\infty}=0}
u^j_{(n)} \right\|^2_{ L^{12}_tL^9_x  (I\times
\R^3) } \lesssim & \varlimsup_{n\rightarrow +\infty}  \sum_{j<k, h^j_{\infty}=0} \left\|  u^j_{(n)}
\right\|^2_{ L^{12}_tL^9_x  (I\times \R^3) }.
\end{align*}

{\noindent \bf Proof of \eqref{orth:ST norm:I}.} Let $\displaystyle
u^{<k}_{<n>}(t,x):=
 \sum_{j<k} u^j_{<n>}(t,x)$ where
\begin{align*}
u^j_{<n>}(t,x):=   \frac{\left<\nabla\right>}{\left<\nabla\right>^j_{\infty} }
u^j_{(n)} = \frac{1}{\left<\nabla\right>^j_{\infty}}
\overrightarrow{u}^j_{(n)}
 =   \frac{1}{
\left<\nabla\right>^j_{\infty}} T^j_n
\overrightarrow{U}^j_{\infty}\left(\frac{t-t^j_n}{(h^j_n)^2}\right)
= h^j_n T^j_n
\widehat{U}^j_{\infty}\left(\frac{t-t^j_n}{(h^j_n)^2}\right),
\end{align*}
and
\begin{align*}
u^j_{(n)}(t,x)=h^j_n T^j_n \frac{\left<\nabla\right>^j_{\infty}}{\left<\nabla\right>^j_n}
\widehat{U}^j_{\infty}\left(\frac{t-t^j_n}{(h^j_n)^2}\right).
\end{align*}
Then we have
\begin{align*}
& \left\| f_1\left(u^{<k}_{(n)}\right)   - \sum_{j<k}
\frac{\left<\nabla\right>^j_{\infty}}{\left<\nabla\right>}  f_1 \left(
\frac{\left<\nabla\right>}{\left<\nabla\right>^j_{\infty}}  u^j_{(n)} \right)  \right\|_{ST^*} \\
&\leq  \left\| f_1\left(u^{<k}_{(n)}\right) - f_1\left(u^{<k}_{\left<n\right>} \right) \right\|_{ST^*} +
\left\| f_1\left(u^{<k}_{\left<n\right>}\right)- \sum_{j<k} f_1 \left(
u^j_{\left<n\right>} \right)  \right\|_{ST^*} \\
&\quad + \left\| \sum_{j<k}
f_1 \left(
u^j_{\left<n\right>} \right)-\sum_{j<k}
\frac{\left<\nabla\right>^j_{\infty}}{\left<\nabla\right>}  f_1 \left(
u^j_{\left<n\right>} \right)  \right\|_{ST^*}\\
 & \leq\left\| f_1\left(u^{<k}_{(n)}\right) - f_1\left(u^{<k}_{\left<n\right>} \right) \right\|_{ST^*} +
\left\| f_1\left(u^{<k}_{\left<n\right>}\right)- \sum_{j<k} f_1 \left(
u^j_{\left<n\right>} \right)  \right\|_{ST^*} \\
& \quad + \left\| \sum_{j<k, h^j_{\infty}=0}
f_1 \left(
u^j_{\left<n\right>} \right)-\sum_{j<k, h^j_{\infty}=0}
\frac{|\nabla|}{\left<\nabla\right>}  f_1 \left(
u^j_{\left<n\right>} \right)  \right\|_{ST^*}.
\end{align*}
By \eqref{orth:I} and the approximation argument in \cite{IbrMN:f:NLKG}, we have
\begin{align*}
 \left\| f_1\left(u^{<k}_{(n)}\right) - f_1\left(u^{<k}_{\left<n\right>} \right) \right\|_{ST^*} +
\left\| f_1\left(u^{<k}_{\left<n\right>}\right)- \sum_{j<k} f_1 \left(
u^j_{\left<n\right>} \right)  \right\|_{ST^*} \longrightarrow 0
\end{align*}
as $n\rightarrow +\infty$. In addition, by $h^j_n\rightarrow 0$ as $n\rightarrow +\infty$, we have
\begin{align*}
 \left\| \sum_{j<k, h^j_{\infty}=0} \left(1-\frac{|\nabla|}{\left<\nabla\right>} \right)
f_1 \left(
u^j_{\left<n\right>} \right)   \right\|_{L^2_t \dot B^{1/3}_{18/11,2}  }
  =&  \left\| \left(1- \frac{|\nabla|}{\left<\nabla\right>^j_n}\right) \sum_{j<k, h^j_{\infty}=0}
f_1 \left(\widehat{U}^j_{\infty}
 \right)   \right\|_{L^2_t \dot B^{1/3}_{18/11,2}  }
\longrightarrow    0,\\
 \left\| \sum_{j<k, h^j_{\infty}=0} \left(1-\frac{|\nabla|}{\left<\nabla\right>} \right)
f_1 \left(
u^j_{\left<n\right>} \right)  \right\|_{
L^2_t\dot B^{1/2}_{6/5,2}}
 = & \left(h^j_n\right)^{1/2}\left\| \left(1- \frac{|\nabla|}{\left<\nabla\right>^j_n}\right) \sum_{j<k, h^j_{\infty}=0}
f_1 \left(\widehat{U}^j_{\infty}
 \right)   \right\|_{
L^2_t\dot B^{1/2}_{6/5,2}}
\longrightarrow   0,
\end{align*}
as $n\rightarrow +\infty$. Therefore, we have
\begin{align*}
\lim_{n\rightarrow +\infty} \left\| f_1\left(u^{<k}_{(n)}\right)   - \sum_{j<k}
\frac{\left<\nabla\right>^j_{\infty}}{\left<\nabla\right>}  f_1 \left(
\frac{\left<\nabla\right>}{\left<\nabla\right>^j_{\infty}}  u^j_{(n)} \right)  \right\|_{ST^*}=0.
\end{align*}

{\noindent \bf Proof of \eqref{orth:ST norm:II}.} Note that
\begin{align*}
& \left\| f_2\left(u^{<k}_{(n)}\right)   - \sum_{j<k}  h^j_{\infty}
\frac{\left<\nabla\right>^j_{\infty}}{\left<\nabla\right>}  f_2 \left(
\frac{\left<\nabla\right>}{\left<\nabla\right>^j_{\infty}}  u^j_{(n)} \right)  \right\|_{ST^*} \\
 & \leq \left\| f_2\left(u^{<k}_{(n)}\right) -  f_2\left(u^{<k}_{\left<n\right>}\right)  \right\|_{ST^*}
+ \left\|  f_2\left(u^{<k}_{\left<n\right>}\right)- \sum_{j<k } f_2\left(u^{j}_{\left<n\right>}\right)  \right\|_{ST^*}
  + \left\|\sum_{j<k, h^j_{\infty}=0} f_2\left(u^{j}_{\left<n\right>}\right)   \right\|_{ST^*}.
\end{align*}

By the analogue analysis, we have
\begin{align*}
 \left\| f_2\left(u^{<k}_{(n)}\right) -  f_2\left(u^{<k}_{\left<n\right>}\right)  \right\|_{ST^*}
+ \left\|  f_2\left(u^{<k}_{\left<n\right>}\right)- \sum_{j<k} f_2\left(u^{j}_{\left<n\right>}\right)  \right\|_{ST^*} \longrightarrow 0,
\end{align*}
and
\begin{align*}
  \left\|\sum_{j<k, h^j_{\infty}=0} f_2\left(u^{j}_{\left<n\right>}\right)   \right\|_{ST^*} \longrightarrow 0
\end{align*}
as $n \rightarrow +\infty$. Hence, we obtain
\begin{align*}
\lim_{n\rightarrow +\infty} \left\| f_2\left(u^{<k}_{(n)}\right)   - \sum_{j<k}  h^j_{\infty}
\frac{\left<\nabla\right>^j_{\infty}}{\left<\nabla\right>}  f_2 \left(
\frac{\left<\nabla\right>}{\left<\nabla\right>^j_{\infty}}  u^j_{(n)} \right)  \right\|_{ST^*}=0.
\end{align*}
These complete the proof.
\end{proof}

After this preliminaries, we now show that $\overrightarrow{u}^{<k}_{(n)}+
\overrightarrow{w}^k_n$ is a good approximation for
$\overrightarrow{u}_n$ provided that each nonlinear profile has finite
global Strichartz norm.

\begin{proposition}\label{approximation}
Let $u_n$ be a sequence of local, radial solutions of \eqref{NLS} around
$t=0$ in $\KKK^+$ satisfying \begin{align*}
M\left(u_n\right)< \infty, \quad \varlimsup_{n\rightarrow
\infty} E(u_n)<m.
\end{align*} Suppose that in the nonlinear profile decomposition
\eqref{profile:nonlinear}, every nonlinear profile
$\widehat{U}^j_{\infty}$ has finite global Strichartz and energy
norms we have
\begin{align*}
\big\|\widehat{U}^j_{\infty}\big\|_{ST^j_{\infty}(\R)}
+\big\|\overrightarrow{U}^{j}_{\infty}\big\|_{L^{\infty}_tL^2_x(\R^3)}<\infty.
\end{align*}
 Then $u_n$ is bounded for large $n$ in the Strichartz and the
 energy norms
\begin{align*}\varlimsup_{n\rightarrow \infty}
\big\|u_n\big\|_{ST(\R)} +
\big\|\overrightarrow{u}_n\big\|_{L^{\infty}_tL^2_x(\R)} < \infty.
\end{align*}
\end{proposition}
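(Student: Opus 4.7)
The plan is to construct an approximate solution $\widetilde{u}^k_n := u^{<k}_{(n)} + w^k_n$ for each sufficiently large $k \leq K$, verify that it matches $u_n$ in initial data and satisfies \eqref{NLS} up to a small forcing, and then invoke Proposition \ref{stability} to transfer the $ST$-boundedness from $\widetilde{u}^k_n$ to $u_n$. First, the initial data match: the construction preceding \eqref{profile:nonlinear} gives $\overrightarrow{u}^j_{(n)}(0) - \overrightarrow{u}^j_n(0) \to 0$ in $L^2$, and the linear decomposition of Proposition \ref{L:linear profile} gives $v_n(0) = \sum_{j<k} v^j_n(0) + w^k_n(0)$, so $\widetilde{u}^k_n(0) - u_n(0) \to 0$ in $H^1$ as $n\to\infty$; Strichartz then renders $\|e^{it\Delta}(u_n(0) - \widetilde{u}^k_n(0))\|_{ST}$ as small as required.

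Second, I would establish uniform Strichartz bounds on $\widetilde{u}^k_n$. By \eqref{approximation:ST control:all} and \eqref{approximation:ST control:single},
\begin{align*}
\varlimsup_{n\to\infty}\|u^{<k}_{(n)}\|_{ST(\R)}^2 \lesssim \sum_{j<K}\|\widehat{U}^j_\infty\|_{ST^j_\infty(\R)}^2 .
\end{align*}
The tail of this sum is controlled by the small data theory applied to $\widehat{U}^j_\infty$, since the $L^2$ pseudo-orthogonality \eqref{orth:II:linear energy} (taken with multiplier $\mu=\langle\nabla\rangle^{-1}$) forces $\sum_j \|\varphi^j\|_{L^2}^2 \lesssim M(u_n)+E(u_n)$; only finitely many profiles carry large data, and the remaining ones fall into the perturbative regime. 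Meanwhile $\|w^k_n\|_{ST(\R)}$ is uniformly bounded by Lemma \ref{Strichartz } and Lemma \ref{strichartz:exotic} applied to $\overrightarrow{w}^k_n(0)\in L^2$. Hence $\|\widetilde{u}^k_n\|_{ST(\R)}\leq L$ with $L$ independent of $n,k$.

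Third, I compute $e^k_n := (i\partial_t + \Delta)\widetilde{u}^k_n + f_1(\widetilde{u}^k_n) + f_2(\widetilde{u}^k_n)$. Since $w^k_n$ is free and each $u^j_{(n)}$ solves its own equation with operator $\langle\nabla\rangle^j_\infty$, the forcing splits as
\begin{align*}
e^k_n = & \Bigl[f_1(u^{<k}_{(n)}) - \sum_{j<k}\tfrac{\langle\nabla\rangle^j_\infty}{\langle\nabla\rangle}f_1\bigl(\tfrac{\langle\nabla\rangle}{\langle\nabla\rangle^j_\infty}u^j_{(n)}\bigr)\Bigr] + \Bigl[f_2(u^{<k}_{(n)}) - \sum_{j<k} h^j_\infty \tfrac{\langle\nabla\rangle^j_\infty}{\langle\nabla\rangle}f_2\bigl(\tfrac{\langle\nabla\rangle}{\langle\nabla\rangle^j_\infty}u^j_{(n)}\bigr)\Bigr] \\
& + \bigl[f_1(u^{<k}_{(n)} + w^k_n) - f_1(u^{<k}_{(n)})\bigr] + \bigl[f_2(u^{<k}_{(n)} + w^k_n) - f_2(u^{<k}_{(n)})\bigr].
\end{align*}
The first two brackets vanish in $ST^*(I)$ as $n\to\infty$ for any compact $I\subset\R$ by \eqref{orth:ST norm:I}--\eqref{orth:ST norm:II}. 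The remaining two are polynomials in $u^{<k}_{(n)}$ and $w^k_n$ with at least one factor of $w^k_n$; here I would interpolate the uniform Strichartz bound $\|w^k_n\|_{L^{10}_t\dot B^{1/3}_{90/19,2}\cap L^{12}_t L^9_x}\lesssim 1$ against the weak-type smallness $\|\overrightarrow{w}^k_n\|_{L^\infty_t B^{-3/2}_{\infty,\infty}} \to 0$ from \eqref{small:w weak topology} to conclude that $\lim_{k\to K}\varlimsup_{n\to\infty}\bigl(\|w^k_n\|_{L^{10}_{t,x}} + \|w^k_n\|_{L^5_{t,x}}\bigr) = 0$; combined with the uniform $ST$-bound on $u^{<k}_{(n)}$ and the $ST^*$ product estimates already used in the proof of Theorem \ref{lwp}, this makes the cross terms arbitrarily small in $ST^*$. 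Choosing first $k$ and then $n$ large enough that the total error and the initial data gap fall below the threshold $\varepsilon_0(E_0, E', L)$ of Proposition \ref{stability}, I apply that proposition on every compact $I\subset\R$ with $w=\widetilde{u}^k_n$, obtain $\|u_n\|_{ST(I)}\leq C$ independently of $I$, and exhaust $\R$; the $L^\infty_t L^2_x$ bound on $\overrightarrow{u}_n$ then follows from Strichartz together with the conservation of mass and energy. The main obstacle is precisely the $w^k_n$ cross interaction in $e^k_n$: one must squeeze enough smallness from the very weak bound in $B^{-3/2}_{\infty,\infty}$, via interpolation, to beat the fact that $w^k_n$ is not small in any single Strichartz norm.
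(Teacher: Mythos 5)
Your proposal is correct and follows essentially the same route as the paper: approximate $u_n$ by $u^{<k}_{(n)}+w^k_n$, match initial data, get uniform $ST$ bounds from Lemma \ref{orth:III} together with the small-data/perturbative treatment of all but finitely many profiles (using the $L^2$ and $\dot H^1$ orthogonality), control the error terms via \eqref{orth:ST norm:I}--\eqref{orth:ST norm:II} and the vanishing of $w^k_n$ in interpolated norms coming from \eqref{small:w weak topology}, and conclude with Proposition \ref{stability} on compact intervals exhausting $\R$. The only cosmetic difference is that you apply the small-data theory to the profiles $\widehat{U}^j_{\infty}$ while the paper applies it to $u^j_{(n)}$, which amounts to the same estimate.
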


\begin{proof}We only need to verify the condition of Proposition \ref{stability}. Note that $u^{<k}_{(n)} + w^k_n$ satisfies that
\begin{align*}
  & \left(i \partial_t + \Delta\right) \left(u^{<k}_{(n)} + w^k_n\right)=f_1\left(u^{<k}_{(n)} + w^k_n\right) + f_2\left( u^{<k}_{(n)} + w^k_n\right)\\
 &\qquad + f_1\left(u^{<k}_{(n)}\right) - f_1\left(u^{<k}_{(n)}+ w^k_n \right)
  + f_2\left(u^{<k}_{(n)}\right) - f_2\left(u^{<k}_{(n)}+ w^k_n \right)\\
& \qquad+ \sum_{j<k}
\frac{\left<\nabla\right>^j_{\infty}}{\left<\nabla\right>}  f_1 \left(
\frac{\left<\nabla\right>}{\left<\nabla\right>^j_{\infty}}  u^j_{(n)} \right) - f_1\left(u^{<k}_{(n)}\right)   + \sum_{j<k}  h^j_{\infty}
\frac{\left<\nabla\right>^j_{\infty}}{\left<\nabla\right>}  f_2 \left(
\frac{\left<\nabla\right>}{\left<\nabla\right>^j_{\infty}}  u^j_{(n)} \right) -  f_2\left(u^{<k}_{(n)}\right).
\end{align*}

First, by the construction of $\overrightarrow{u}^{<k}_{(n)}$, we know that
\begin{align*}
\left\| \left(\overrightarrow{u}^{<k}_{(n)} (0) + \overrightarrow{w}^k_n(0)\right) -\overrightarrow{u}_n(0) \right\|_{ L^2_x}
\leq \sum_{j<k} \left\|\overrightarrow{u}^j_{(n)}(0) -\overrightarrow{u}^j_n(0) \right\|_{ L^2_x} \rightarrow 0,
\end{align*}
as $n\rightarrow +\infty$, which also implies that for large $n$, we have
\begin{align*}
\left\| \overrightarrow{u}^{<k}_{(n)} (0) + \overrightarrow{w}^k_n(0)  \right\|_{ L^2_x} \leq E_0.
\end{align*}

Next, by the linear profile decomposition in Proposition \ref{L:linear profile}, we know that
\begin{align*}
\big\|u_n(0)\big\|^2_{L^2}= &  \left\|v_n(0)\right\|^2_{L^2_x}
= \sum_{j<k}\left\|v^j_n(0)\right\|^2_{L^2_x} + \left\|w^k_n(0)\right\|^2_{L^2_x} + o_n(1)\\
\geq &  \sum_{j<k}\left\|v^j_n(0)\right\|^2_{L^2_x} + o_n(1)
=     \sum_{j<k}\left\|u^j_{(n)}(0)\right\|^2_{L^2_x} + o_n(1),
\\
\left\|u_n(0)\right\|^2_{\dot H^1_x}=& \left\|v_n(0)\right\|^2_{\dot H^1_x}
= \sum_{j<k}\left\|v^j_n(0)\right\|^2_{\dot H^1_x} + \left\|w^k_n(0)\right\|^2_{\dot H^1_x} + o_n(1)\\
\geq &  \sum_{j<k}\left\|v^j_n(0)\right\|^2_{\dot H^1_x} + o_n(1)
=     \sum_{j<k}\left\|u^j_{(n)}(0)\right\|^2_{\dot H^1_x} + o_n(1),
\end{align*}
which means except for a finite set $J\subset \N$, the energy of $u^j_{(n)}$ with $j\not \in J$ is smaller than the iteration
threshold, hence we have
\begin{align*}
\big\|u^j_{(n)}\big\|_{ST(\R)} \lesssim \big\|\overrightarrow{u}^j_{(n)}(0)\big\|_{L^2_x},
\end{align*}
thus, for any finite interval $I$, by Lemma \ref{orth:III}, we have
\begin{align*}
\sup_k \varlimsup_{n\rightarrow +\infty} \big\|u^{<k}_{(n)}\big\|^2_{ST(I)} \lesssim &  \sup_k \varlimsup_{n\rightarrow +\infty}\sum_{j<k} \big\|u^{j}_{(n)}\big\|^2_{ST(I)}\\
= & \sup_k \varlimsup_{n\rightarrow +\infty} \left[\sum_{j<k, j\in J} \big\|u^{j}_{(n)}\big\|^2_{ST(I)} + \sum_{j<k, j\not \in J} \big\|u^{j}_{(n)}\big\|^2_{ST(I)}\right]\\
\lesssim & \sum_{j<k, j\in J} \big\|\widehat{U}^{j}_{\infty}\big\|^2_{ST^j_{\infty}(I)} + \sup_k \varlimsup_{n\rightarrow +\infty} \sum_{j<k, j\not \in J} \big\|\overrightarrow{u}^j_{(n)}(0)\big\|^2_{L^2_x}\\
< & \infty.
\end{align*}
This together with the Strichartz estimate for $w^k_n$ implies that
\begin{align*}
\sup_k \varlimsup_{n\rightarrow +\infty} \big\|u^{<k}_{(n)} + w^k_n\big\|^2_{ST(I)} <  \infty.
\end{align*}

Last we need show the nonlinear perturbation is small in some sense. By Proposition \ref{L:linear profile} and Lemma \ref{orth:III}, we have
\begin{align*}
& \left\| f_1\left(u^{<k}_{(n)}\right) - f_1\left(u^{<k}_{(n)}+ w^k_n \right)  \right\|_{ST^*(I)} \rightarrow 0 ,\\
 &  \left\| f_2\left(u^{<k}_{(n)}\right) - f_2\left(u^{<k}_{(n)}+ w^k_n \right) \right\|_{ST^*(I)} \rightarrow 0,
 \end{align*}
and
\begin{align*}
&  \left\| \sum_{j<k}
\frac{\left<\nabla\right>^j_{\infty}}{\left<\nabla\right>}  f_1 \left(
\frac{\left<\nabla\right>}{\left<\nabla\right>^j_{\infty}}  u^j_{(n)} \right) - f_1\left(u^{<k}_{(n)}\right)  \right\|_{ST^*(I)} \rightarrow 0,  \\
&   \left\| \sum_{j<k}  h^j_{\infty}
\frac{\left<\nabla\right>^j_{\infty}}{\left<\nabla\right>}  f_2 \left(
\frac{\left<\nabla\right>}{\left<\nabla\right>^j_{\infty}}  u^j_{(n)} \right) -  f_2\left(u^{<k}_{(n)}\right)  \right\|_{ST^*(I)} \rightarrow 0,
\end{align*}
as $n\rightarrow +\infty$. Therefore, by Proposition \ref{stability}, we can obtain the desired result, which concludes the proof.
\end{proof}

%
%
%
%

\section{Part II: GWP and Scattering for $\KKK^+$}\label{S:GWP-Scattering}
After the stability analysis of the
scattering solution of \eqref{NLS} and the compactness analysis (linear and nonlinear profile decompositions) of
a sequence of the radial solutions of \eqref{NLS} in the energy space.
We now use them to show the scattering result of Theorem
\ref{theorem} by contradiction.

Let $E^*$ be the threshold for the uniform Strichartz norm bound,
i.e.,
\begin{align*}
E^*:=\sup\{A>0, ST(A)<\infty\}
\end{align*}
where $ST(A)$ denotes the supremum of $\big\|u\big\|_{ST(I)}$ for any
strong radial solution $u$ of \eqref{NLS} in $\KKK^+$ on any interval $I$ satisfying
$E(u)\leq A$, $M(u)<\infty$.

The small solution scattering theory gives us $E^*>0$.

Now we are going to show that $E^*\geq m$ by contradiction. From now on, suppose that $E^*\geq m$ fails,
that is, we assume that
\begin{align}\label{contradiction:J}
E^*<m.
\end{align}

\subsection{Existence of a critical element} In this subsection,
 by the profile decomposition and the stability theory of the scattering solution of \eqref{NLS}, we
show the existence of the critical element, which is the radial, energy solution of \eqref{NLS}
with the smallness energy $E^*$ and infinite Strichartz norm.

By the definition of $E^*$ and the fact that $E^*<m$, there exist
a sequence of radial solutions $\{u_n\}_{n\in \N}$  of \eqref{NLS} in $\KKK^+$, which have the maximal existence interval $I_n$ and satisfy that
\begin{align*}
M(u_n)< \infty,
\quad E(u_n)\rightarrow E^*<m, \quad \big\|u_n\big\|_{ST(I_n)}\rightarrow
+\infty, \quad \text{as}\;\; n\rightarrow +\infty,
\end{align*}
then we have $\big\|u_n\big\|_{H^1}<\infty$ by Lemma \ref{free-energ-equiva}. By the compact argument (profile decomposition) and the stability theory, we can show that

\begin{theorem}\label{APS:existence}
Let $u_n$ be a sequence of radial solutions of \eqref{NLS} in $\KKK^+$ on
$I_n\subset \R$ satisfying
\begin{align*}
M(u_n)<\infty,
\quad E(u_n)\rightarrow E^*<m, \quad \big\|u_n\big\|_{ST(I_n)}\rightarrow
+\infty, \quad \text{as}\;\; n\rightarrow +\infty.
\end{align*}
Then there exists a global, radial solution $u_c$ of \eqref{NLS} in $\KKK^+$
satisfying
\begin{align*}
E(u_c)=E^*<m, \quad K(u_c) > 0, \quad \big\|u_c\big\|_{ST(\R)}=\infty.
\end{align*}
In addition, there are a sequence $t_n\in \R$ and radial function
$\varphi \in L^2(\R^3)$ such that, up to a subsequence,  we have as $n\rightarrow +\infty$,
\begin{align}\label{compact:initial}
\left\|\frac{|\nabla|}{\left<\nabla \right>}\Big(\overrightarrow{u}_n(0, x) -
e^{-it_n\Delta}\varphi(x )\Big)\right\|_{L^2}\rightarrow 0.
\end{align}
\end{theorem}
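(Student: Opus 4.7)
The plan is to extract the critical element by applying the linear profile decomposition of Proposition~\ref{L:linear profile} to the sequence of initial data $\overrightarrow{u}_n(0)$, using Proposition~\ref{decomp:stable:K} to confirm that each profile lies in $\KKK^+$, and then exploiting Proposition~\ref{approximation} combined with the defining property of $E^*$ to show that only one nonlinear profile can have infinite Strichartz norm; this surviving profile will be the sought critical element.

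First, viewing $\overrightarrow{v}_n := \overrightarrow{u}_n(0)$ as free Schr\"odinger data, I extract
\[ \overrightarrow{v}_n(t,x) = \sum_{j<k}\overrightarrow{v}^j_n(t,x) + \overrightarrow{w}^k_n(t,x), \qquad \overrightarrow{v}^j_n(t,x) = e^{i(t-t^j_n)\Delta}T^j_n\varphi^j. \]
Since each $u_n(0)\in\KKK^+$ with $E(u_n)\to E^*<m$, Proposition~\ref{decomp:stable:K} implies that for large $n$ every $v^j_n(0)$ and $w^k_n(0)$ lies in $\KKK^+$, and that $M$, $E$, $K$ decompose asymptotically; in particular $\sum_{j<k}E(v^j_n(0))+E(w^k_n(0))\to E^*$ with every $E(v^j_n(0))\geq 0$.

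Second, I show that exactly one profile carries the full limit energy. For each $\varphi^j\neq 0$ construct the nonlinear profile $\widehat{U}^j_\infty$ of \eqref{nonlinear profile}. I claim $\|\widehat{U}^j_\infty\|_{ST^j_\infty(\R)}<\infty$ whenever $\lim_n E(v^j_n(0))<E^*$. When $h^j_\infty=1$, $\widehat{U}^j_\infty$ solves \eqref{NLS} with radial $\KKK^+$-data of strictly smaller energy, so the definition of $E^*$ gives the bound directly. When $h^j_\infty=0$, $\widehat{U}^j_\infty$ solves the focusing energy-critical equation $iu_t+\Delta u=-|u|^4u$; since the $L^4$ and $L^2$ contributions to $E$ and $K$ are killed by the concentrating rescaling $T^j_n$ while the $\dot H^1$ and $L^6$ parts are scale-invariant, the limits read $E^c(\widehat{U}^j_\infty(\tau^j_\infty))<m=E^c(W)$ and $K^c(\widehat{U}^j_\infty(\tau^j_\infty))\geq 0$, so the radial Kenig-Merle theorem \cite{KenM:NLS:GWP} yields finite Strichartz norm. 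If every profile sat in this smaller-energy regime, Proposition~\ref{approximation} would bound $\|u_n\|_{ST(\R)}$ uniformly, contradicting $\|u_n\|_{ST(I_n)}\to\infty$. Hence a unique index (relabeled $j=0$) satisfies $\lim_n E(v^0_n(0))=E^*$, all other $\varphi^j=0$, and $E(w^1_n(0))\to 0$.

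Third, I identify $u_c:=\widehat{U}^0_\infty$ and read off its properties. Because $w^1_n(0)\in\KKK^+$, the identity $H=E-\tfrac16 K$ together with Lemma~\ref{minimization:H} give $H(w^1_n(0))\leq E(w^1_n(0))\to 0$, so $\|\nabla w^1_n(0)\|_{L^2}\to 0$, equivalently $\bigl\|\tfrac{|\nabla|}{\langle\nabla\rangle}\overrightarrow{w}^1_n(0)\bigr\|_{L^2}\to 0$. The surviving $\widehat{U}^0_\infty$ must have $\|\widehat{U}^0_\infty\|_{ST^0_\infty(\R)}=\infty$, since otherwise Proposition~\ref{approximation} would again bound $\|u_n\|_{ST}$; by the dichotomy of the previous step this forces $h^0_\infty=1$, whence $T^0_n=\mathrm{Id}$. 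Setting $\varphi:=\varphi^0$ and $t_n:=t^0_n$, the decomposition reduces to $\overrightarrow{u}_n(0)=e^{-it_n\Delta}\varphi+\overrightarrow{w}^1_n(0)$, which is precisely \eqref{compact:initial}. Global existence of $u_c$ follows from Lemma~\ref{uniform bound}, which preserves $K(u_c(t))\geq 6(m-E^*)>0$ along the flow and hence yields a uniform $H^1$ bound via Lemma~\ref{free-energ-equiva}; in particular $K(u_c)>0$, since $K(u_c)=0$ would contradict $E(u_c)=E^*<m$ in \eqref{minimization}. The central difficulty lies in the second step: two different scattering inputs must be combined (the defining property of $E^*$ for $h^j_\infty=1$ and Kenig-Merle for $h^j_\infty=0$), and in each regime the constraint $v^j_n(0)\in\KKK^+$ has to be translated into the correct variational hypothesis for the relevant equation; additionally, ruling out $h^0_\infty=0$ for the surviving profile is essential in order to formulate \eqref{compact:initial} with a single fixed $\varphi\in L^2$.
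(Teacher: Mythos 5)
Your extraction of the critical element follows the paper's own route (profile decomposition of $\overrightarrow{u}_n(0)$, Proposition \ref{decomp:stable:K} to keep every piece in $\KKK^+$, Proposition \ref{approximation} plus the definition of $E^*$ to reduce to a single profile, and Kenig--Merle to force $h^0_n\equiv 1$, giving \eqref{compact:initial}), but there is a genuine gap in your last step: you deduce that $u_c$ is \emph{global} from the sign persistence of $K$ and the resulting uniform $H^1$ bound (Lemmas \ref{uniform bound} and \ref{free-energ-equiva}). For \eqref{NLS} this is not sufficient. The quintic term is $\dot H^1$-critical, so in Theorem \ref{lwp} the local existence time is not a function of $\|u_0\|_{H^1}$: it depends on the datum through the smallness of $\|e^{it\Delta}u_0\|_{ST([-T,T])}$, and the continuation criterion is divergence of the $ST$-norm, not of the $H^1$ norm. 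Hence an a priori $H^1$ bound cannot be iterated to conclude $I=\R$. The paper closes this step by a different argument that you cannot skip: assuming the maximal time is finite, take $t_n$ tending to it, apply the already established compactness statement to $\widehat{U}^0_{\infty}(t+t_n)$ to get \eqref{compact:t_n}, choose $\delta$ so small that $\big\|\left<\nabla\right>^{-1}\overrightarrow{v}(t-t'_n)\big\|_{ST([-\delta,\delta])}\leq \varepsilon$ for the free evolution of the limiting profile $\psi$, and then use the small-data/perturbation theory to extend $u_c$ to $[t_n-\delta,t_n+\delta]$ for large $n$, contradicting maximality. Some argument of this kind (compactness plus uniform local theory on short intervals) is required; the $H^1$ bound alone does not give global existence at the critical level.

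A smaller, fixable imprecision: your Step 2 dichotomy is stated under the hypothesis $\lim_n E(v^j_n(0))<E^*$, yet when you rule out $h^0_{\infty}=0$ for the surviving profile you invoke ``the dichotomy of the previous step'' even though that profile has energy exactly $E^*$. What actually closes this case -- and what the paper uses -- is that in the $h_{\infty}=0$ regime the relevant threshold is $m$ itself: the rescaled profile solves the focusing energy-critical equation with $E^c\leq E^*<m=E^c(W)$ and $K^c\geq 0$, so the radial Kenig--Merle theorem forbids infinite Strichartz norm irrespective of whether its energy is below $E^*$. Your text contains this fact implicitly in Step 2, but you should state the threshold $m$ (not $E^*$) explicitly when treating the surviving profile. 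Apart from these points, the structure of your argument, including the identification $\varphi=\varphi^0$, $t_n=t^0_n$, the vanishing of $\big\|\tfrac{|\nabla|}{\left<\nabla\right>}\overrightarrow{w}^1_n(0)\big\|_{L^2}$, and $K(u_c)>0$ via Proposition \ref{threshold-energy}, coincides with the paper's proof.
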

\begin{proof} By the time translation symmetry of \eqref{NLS}, we can translate $u_n$ in $t$ such that $0\in I_n$ for all $n$.
Then by the linear and nonlinear profile decomposition of $u_n$, we have
\begin{align*}
e^{it\Delta }\overrightarrow{u}_n(0,x) = &
\sum_{j<k}\overrightarrow{v}^j_n(t,x) + \overrightarrow{w}^k_n(t,x),
 \quad \overrightarrow{v}^j_n(t,x) =
e^{i(t-t^j_n)\Delta}T^j_n\varphi^j,\\
\overrightarrow{u}^{<k}_{(n)}(t,x) =  & \sum_{j<k}
\overrightarrow{u}^j_{(n)}(t,x),  \quad
\overrightarrow{u}^j_{(n)}(t,x)=T^j_n\overrightarrow{U}^j_{\infty}\left(\frac{t-t^j_n }{(h^j_n)^2}\right),\\
&  \big\|\overrightarrow{u}^j_{(n)}(0) -\overrightarrow{v}^j_n(0)\big\|_{L^2}\rightarrow
0.
\end{align*}

By Proposition \ref{decomp:stable:K} and the following observations that
\begin{enumerate}
\item Every radial solution of \eqref{NLS} in $\KKK^+$ with the
energy less than $E^*$ has global finite Strichartz norm by the
definition of $E^*$.

\item Lemma \ref{approximation} precludes that all the nonlinear profiles
$\overrightarrow{U}^j_{\infty}$ have finite global Strichartz norm.
\end{enumerate}
we deduce that there is only one radial profile and
\begin{align*}
E(u^0_{(n)}(0)) \rightarrow E^*, \quad u^0_{(n)}(0)\in \KKK^+, \quad
\big\|\widehat{U}^{0}_{\infty}\big\|_{ST^{0}_{\infty}(I)}=\infty, \quad
\big\|w^1_{n}\big\|_{L^{\infty}_t\dot H^1_x}\rightarrow 0.
\end{align*}

If $h^0_{n}\rightarrow 0$, then
$\widehat{U}^0_{\infty}=|\nabla|^{-1} \overrightarrow{U}^0_{\infty}$
solves the $\dot H^1$-critical NLS
\begin{align*}
\left(i\partial_t + \Delta \right)\widehat{U}^0_{\infty} = f_1
(\widehat{U}^0_{\infty})
\end{align*}
and satisfies
\begin{align*}
E^c\left(\widehat{U}^0_{\infty}(\tau^0_{\infty})\right)=E^* <m,
\;\; K^c\left(\widehat{U}^0_{\infty}(\tau^0_{\infty})\right)\geq 0,
\;\; \big\|\widehat{U}^0_{\infty}\big\|_{\left(L^{10}_t \dot B^{1/3}_{90/19, 2} \cap L^{12}_tL^9_x \right)(I \times \R^3) }=\infty.
\end{align*}
However, it is in contradiction with Kenig-Merle's result\footnote{By the global $L^{10}_{t,x}$
estimate of solution $u$ of \eqref{NLS:focusing critical}, we can obtain the global $L^q_t\dot W^{1, r}_x$
estimate of $u$ for any Schr\"{o}dinger $L^2$-admissible pair $(q,r)$.} in \cite{KenM:NLS:GWP}. Hence $h^0_n\equiv1$,
which implies \eqref{compact:initial}.

Now we show that $\widehat{U}^0_{\infty}=\left<\nabla\right>^{-1}
\overrightarrow{U}^j_{\infty}$ is a global solution, which is the consequence of the compactness of \eqref{compact:initial}.
Suppose not, then we can choose a sequence $t_n \in \R$ which
approaches the maximal existence time. Since
$\widehat{U}^0_{\infty}(t+t_n)$ satisfies the assumption of this
theorem, then applying the above argument to it, we obtain that for some $\psi\in L^2$ and another sequence $t'_n \in
\R $, as $n\rightarrow +\infty$
\begin{align}\label{compact:t_n}
\left\|\frac{|\nabla|}{\left<\nabla\right>}\left(\overrightarrow{U}^0_{\infty}(t_n)-e^{-it'_n\Delta}\psi(x )\right)\right\|_{L^2}\rightarrow
0.
\end{align}
 Let
$
\overrightarrow{v}(t):=e^{it\Delta}\psi.
$
For any $\varepsilon>0$, there exist $\delta>0$ with $I=[-\delta,
\delta]$ such that
\begin{align*}
\big\|\left<\nabla\right>^{-1}\overrightarrow{v}(t-t'_n)\big\|_{ST(I)}\leq
\varepsilon, \end{align*} which together with \eqref{compact:t_n}
implies that for sufficiently large $n$
\begin{align*}
\big\|\left<\nabla\right>^{-1}
e^{it\Delta}\overrightarrow{U}^0_{\infty}(t_n)\big\|_{ST(I)} \leq
 \varepsilon.
\end{align*}
If $\varepsilon$ is small enough, this implies that the solution
$\widehat{U}^0_{\infty}$ exists on $[t_n-\delta, t_n+\delta]$ for
large $n$ by the small data theory. This contradicts the choice of
$t_n$. Hence $\widehat{U}^0_{\infty}$ is a global solution and it is just the desired
critical element $u_c$. By Proposition \ref{threshold-energy}, we know that $K(u_c)>0$.
\end{proof}

\subsection{Compactness of the critical element}
In order to preclude the critical element, we need obtain some
useful properties about the critical element. In the following
subsections, we establish some properties about the critical element by its minimal energy with infinite Strichartz norm,
especially its compactness and its consequence. Since \eqref{NLS} is
symmetric in $t$, we may assume that
\begin{align}\label{critical:infinity Strich}
\big\| u_c\big\|_{ST(0, +\infty)}=\infty,
\end{align}
we call it a forward critical element.

\begin{proposition}\label{compact:APS}
Let $u_c$ be a forward critical element. Then the set
\begin{align*}
\{u_c(t, x); 0<t<\infty\}
\end{align*}
is precompact in $\dot H^s$ for any $s\in(0, 1]$.
\end{proposition}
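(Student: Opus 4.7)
My plan is to argue by contradiction, following the standard Kenig--Merle compactness strategy adapted to the extraction in Theorem~\ref{APS:existence}. Suppose that $\{u_c(t):t>0\}$ fails to be precompact in $\dot H^s$ for some $s\in(0,1]$; then there exist $\eta_0>0$ and times $\tau_n>0$ with $\|u_c(\tau_n)-u_c(\tau_m)\|_{\dot H^s}\geq\eta_0$ for $n\neq m$. Passing to a subsequence, either $\tau_n$ converges to a finite limit $\tau_*$, in which case the $H^1$-continuity of $t\mapsto u_c(t)$ yields $u_c(\tau_n)\to u_c(\tau_*)$ in $H^1$ and hence in $\dot H^s$ --- contradiction --- or $\tau_n\to+\infty$, to which I restrict. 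I then apply Theorem~\ref{APS:existence} to the shifted sequence $u_n(t,x):=u_c(t+\tau_n,x)$, each a global radial $\KKK^+$-solution with $M(u_n)=M(u_c)<\infty$, $E(u_n)\equiv E^*$, and $\|u_n\|_{ST(\R)}=\infty$ (using that $\|u_c\|_{ST([0,+\infty))}=\infty$ together with subadditivity in the $q$-th power of each $L^q_t$-constituent of the ST-norm).

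Theorem~\ref{APS:existence} produces, along a further subsequence, times $s_n\in\R$ and a radial $\varphi\in L^2$ with
\begin{align*}
\Bigl\|\tfrac{|\nabla|}{\langle\nabla\rangle}\bigl(\overrightarrow{u}_c(\tau_n)-e^{-is_n\Delta}\varphi\bigr)\Bigr\|_{L^2}\longrightarrow 0,
\end{align*}
equivalently $\|u_c(\tau_n)-e^{-is_n\Delta}\psi\|_{\dot H^1}\to 0$ with $\psi:=\langle\nabla\rangle^{-1}\varphi\in H^1$. Extracting a further subsequence so that $s_n$ converges in $[-\infty,+\infty]$, I treat three cases. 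When $s_n\to s_\infty\in\R$, strong $H^1$-continuity of the linear Schr\"odinger flow gives $e^{-is_n\Delta}\psi\to e^{-is_\infty\Delta}\psi$ in $H^1$, so $u_c(\tau_n)\to e^{-is_\infty\Delta}\psi$ in $\dot H^1$; together with the uniform $L^2$-bound from mass conservation, interpolation $\|v\|_{\dot H^s}\lesssim\|v\|_{L^2}^{1-s}\|v\|_{\dot H^1}^{s}$ upgrades this to $\dot H^s$-convergence for every $s\in(0,1]$, contradicting the choice of $\tau_n$.

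When $s_n\to-\infty$, $ST(\R)$-integrability of $e^{it\Delta}\psi$ forces $\|e^{i(t-s_n)\Delta}\psi\|_{ST([0,\infty))}=\|e^{i\tau\Delta}\psi\|_{ST([-s_n,\infty))}\to 0$; combining this with the $\dot H^1\cap L^2$-approximation via the Strichartz estimate of Lemma~\ref{Strichartz } yields $\|e^{it\Delta}u_c(\tau_n)\|_{ST([0,\infty))}\to 0$, whence the small-data portion of Theorem~\ref{lwp} (or Proposition~\ref{stability} with approximate solution $w\equiv 0$) gives $\|u_c\|_{ST([\tau_n,\infty))}\to 0$. Since each $L^q_t$-constituent of the ST-norm satisfies $\|u\|_{L^q_t([0,\infty))}^q=\|u\|_{L^q_t([0,\tau_n])}^q+\|u\|_{L^q_t([\tau_n,\infty))}^q$ and $\|u_c\|_{ST([0,\tau_n])}$ is finite on the bounded interval, one obtains $\|u_c\|_{ST([0,+\infty))}<\infty$, contradicting the forward-critical property. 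In the symmetric case $s_n\to+\infty$, the analogous computation yields $\|u_c\|_{ST((-\infty,\tau_n])}\to 0$; since this quantity is monotone nondecreasing in $n$, the bound $\|u_c\|_{ST((-\infty,\tau_1])}\leq\|u_c\|_{ST((-\infty,\tau_n])}\to 0$ forces $u_c\equiv 0$ on $(-\infty,\tau_1]$, hence $u_c\equiv 0$ by uniqueness of the Cauchy problem, again contradicting the infinite forward ST-norm.

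The main technical point is upgrading the smallness of the \emph{linear} Strichartz norm of $u_c(\tau_n)$ on the appropriate half-line into smallness of the \emph{nonlinear} ST-norm of $u_c$ on the entire corresponding half-line emanating from $\tau_n$: this requires Proposition~\ref{stability} (with $w\equiv 0$ and the error controlled by the vanishing free evolution) rather than merely the local existence statement of Theorem~\ref{lwp}, since the half-line is unbounded. Once $\dot H^1$-compactness (up to time shift) is obtained, the extension to $\dot H^s$ for $s\in(0,1)$ is routine interpolation using the uniform $L^2$-bound, and the case $s=1$ follows directly from the $\dot H^1$-convergence.
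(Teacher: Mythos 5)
Your proposal is correct and follows essentially the same route as the paper: reduce to $\dot H^1$-precompactness via mass conservation and interpolation, apply Theorem~\ref{APS:existence} to the time-translates $u_c(\cdot+\tau_n)$, and rule out $s_n\to\pm\infty$ by combining the vanishing linear tail with the small-data/perturbation theory (forcing either finite forward $ST$-norm or $u_c\equiv 0$), leaving only the bounded case which yields $\dot H^1$-convergence along a subsequence. The only differences are cosmetic: you set up an explicit $\eta_0$-separated contradiction and spell out the half-line small-data step via Proposition~\ref{stability}, whereas the paper states the same conclusions directly.
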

\begin{proof} By the conservation of the mass, it suffices to prove the precompactness of
$u_c(t_n)\}$ in $\dot H^1$ for any positive time $t_1,
t_2, \ldots$. If $t_n$ converges, then it is trivial from the continuity in $t$.

If $t_n \rightarrow +\infty$. Applying Theorem \ref{APS:existence}
to the sequence of solutions $\overrightarrow{u}_c(t+t_n)$, we get another sequence
$t'_n \in \R$ and radial function $\varphi\in L^2$ such that
\begin{align*}
\frac{|\nabla|}{\left<\nabla\right>}\left(\overrightarrow{u}_c(t_n,x) - e^{-it'_n\Delta}\varphi(x )\right)
\rightarrow 0 \quad \text{in}\;\; L^2.
\end{align*}
\begin{enumerate}
\item If $t'_n \rightarrow -\infty$, then we have
\begin{align*}
\big\|\left<\nabla\right>^{-1}e^{it\Delta}\overrightarrow{u}_c(t_n)\big\|_{ST(0,
+\infty)} =
\big\|\left<\nabla\right>^{-1}e^{it\Delta}\varphi\big\|_{ST(-t'_n,
+\infty)} + o_n(1) \rightarrow 0.
\end{align*}
Hence $u_c$ can solve \eqref{NLS} for $t>t_n$ with large $n$
globally by iteration with small Strichartz norms, which contradicts
\eqref{critical:infinity Strich}.

\item If $t'_n \rightarrow +\infty$, then we have
\begin{align*}
\big\|\left<\nabla\right>^{-1}e^{it\Delta}\overrightarrow{u}_c(t_n)\big\|_{ST(-\infty,
0)} =
\big\|\left<\nabla\right>^{-1}e^{it\Delta}\varphi\big\|_{ST(-\infty,
-t'_n)} + o_n(1) \rightarrow 0
\end{align*}
Hence $u_c$ can solve \eqref{NLS} for $t<t_n$ with large $n$
with vanishing Strichartz norms, which implies $u_c=0$ by taking the
limit, which is a contradiction.
\end{enumerate}

Thus $t'_n$ is bounded, which implies that $t'_n$ is precompact, so is $u_c(t_n,
x )$ in $\dot H^1$.
\end{proof}

As a consequence, the energy of $u_c$ stays within a fixed
radius for all positive time, modulo arbitrarily small rest. More
precisely, we define the exterior energy by
\begin{align*}
E_{R }(u;t)=\int_{|x |\geq R} \Big( \big|\nabla u(t,x)\big| ^2   + \big|u(t,x)\big|^4 +
\big|u(t,x)\big|^6 \Big) \; dx
\end{align*}
for any $R>0$. Then we have
\begin{corollary} \label{compact:almost periodicity}
Let $u_c$ be a forward critical element. then for any $\varepsilon$,
there exist $R_0(\varepsilon)>0$ such that
\begin{align*}
E_{R_0 }(u_c; t) \leq \varepsilon E(u_c), \; \text{for any}\; t > 0.
\end{align*}
\end{corollary}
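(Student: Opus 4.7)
The plan is to argue by contradiction, converting the assumed precompactness of the orbit in the various homogeneous Sobolev norms into a uniform tightness estimate on the energy density. Suppose the conclusion fails: then there exist $\varepsilon_0>0$ and sequences $R_n\to+\infty$, $t_n>0$ such that
\[
E_{R_n}(u_c;t_n) > \varepsilon_0\, E(u_c).
\]
The first step is to pass to a convergent subsequence of $\{u_c(t_n)\}$ in the norm $\dot H^1\cap L^4\cap L^6$. Proposition \ref{compact:APS} supplies precompactness in $\dot H^s$ for every $s\in(0,1]$; applying it with $s=1$ and combining with mass conservation (which gives uniform $L^2$ control), the Sobolev embedding $\dot H^1\hookrightarrow L^6$ and the Gagliardo--Nirenberg inequality $\|u\|_{L^4}\lesssim \|u\|_{L^2}^{1/4}\|\nabla u\|_{L^2}^{3/4}$ upgrade this to precompactness in $L^6$ and in $L^4$. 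So, passing to a subsequence, $u_c(t_n)\to v$ in $\dot H^1\cap L^4\cap L^6$ for some radial $v\in H^1$.

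Next I would use the tightness of the limit. Since $v\in\dot H^1\cap L^4\cap L^6$, dominated convergence yields
\[
\int_{|x|\geq R}\bigl(|\nabla v|^2+|v|^4+|v|^6\bigr)\,dx \longrightarrow 0 \quad\text{as } R\to+\infty.
\]
For any $R>0$ we can split
\[
E_{R_n}(u_c;t_n) \leq 2\int_{|x|\geq R}\bigl(|\nabla v|^2+|v|^4+|v|^6\bigr)\,dx + C\bigl\|u_c(t_n)-v\bigr\|_{\dot H^1\cap L^4\cap L^6}\cdot \bigl(1+\text{pieces of }v\bigr),
\]
(using the elementary inequalities $|a|^p\leq C(|a-b|^p+|b|^p)$ and Hölder to pair the difference with bounded norms of $u_c(t_n)$ and $v$). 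Since $R_n\to+\infty$ we may fix $R$ so large that the first term is at most $\tfrac13\varepsilon_0 E(u_c)$, and then take $n$ so large that $R_n\geq R$ and the convergence $u_c(t_n)\to v$ forces the second term below $\tfrac13\varepsilon_0 E(u_c)$. This contradicts the choice of $t_n$.

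The genuinely delicate step is the upgrade from $\dot H^1$-precompactness to joint precompactness in $\dot H^1\cap L^4\cap L^6$; this is where one uses both the uniform $L^2$ bound coming from mass conservation and Proposition~\ref{compact:APS} with $s=1$, together with Sobolev/Gagliardo--Nirenberg. The rest is a standard tightness argument once one has a norm-convergent subsequence with a genuine limit in the energy space. Note that $M(u_c)<\infty$ is guaranteed by the construction of the critical element in Theorem~\ref{APS:existence}, so the $L^2$ bound on the orbit is automatic and the interpolation step is legitimate.
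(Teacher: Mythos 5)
Your proof is correct and is essentially the argument the paper intends: the paper states the corollary without proof as an immediate consequence of Proposition \ref{compact:APS}, and your contradiction/tightness argument is the standard way to turn precompactness of the orbit into uniform smallness of the exterior energy, with the upgrade to $L^4$ and $L^6$ handled legitimately via Sobolev, Gagliardo--Nirenberg and the conserved mass. (A tiny simplification: precompactness in $\dot H^{3/4}$, also furnished by Proposition \ref{compact:APS}, gives the $L^4$ convergence directly by Sobolev embedding, so the interpolation with the $L^2$ bound is not strictly needed.)
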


\subsection{Death of the critical element}
We are in a position to preclude the soliton-like solution by
a truncated Virial identity.
%

\begin{theorem}
The critical element $u_c$ of \eqref{NLS} cannot be a soliton in the sense
of Theorem \ref{APS:existence}.
\end{theorem}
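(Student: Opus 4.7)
The plan is a truncated virial rigidity argument, in the spirit of the blow-up analysis of Section \ref{S:blow up} but pushed in the opposite direction: I will exploit the compactness of the critical trajectory to force a truncated variance $V_R(t)$ to grow without bound, which will clash with the uniform $O(R^2)$ bound coming from mass conservation. Concretely, let $\phi_R(x)=R^2\phi(|x|/R)$ be the radial cutoff from Section \ref{S:blow up} and set
\begin{align*}
V_R(t) := \int_{\R^3}\phi_R(x)|u_c(t,x)|^2\,dx.
\end{align*}
Since $u_c \in L^\infty_t L^2_x$ by conservation of mass, $V_R(t)\lesssim R^2 M(u_c)$ uniformly in $t$, and Cauchy--Schwarz gives $|\partial_t V_R(t)|\lesssim R\|\nabla u_c(t)\|_{L^2}\|u_c(t)\|_{L^2}\lesssim R$ uniformly in $t$ (using Lemma \ref{free-energ-equiva} and energy conservation to bound $\|\nabla u_c(t)\|_{L^2}$).

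The heart of the proof is to establish a uniform positivity
\begin{align*}
\inf_{t\geq 0} K(u_c(t)) \geq \delta_0 > 0,
\end{align*}
which is where I expect the main obstacle to lie. Lemma \ref{uniform bound} already delivers $K(u_c(t))\geq \min\bigl(6(m-E^*),\;\tfrac{2}{3}\|\nabla u_c(t)\|_{L^2}^2+\tfrac12\|u_c(t)\|_{L^4}^4\bigr)$, so it suffices to prevent the second quantity from collapsing along a sequence. If $K(u_c(t_n))\to 0$ for some $t_n$, Proposition \ref{compact:APS} together with Sobolev embedding $\dot H^s\hookrightarrow L^p$ for appropriate $s\in(0,1]$ and $p\in\{4,6\}$ lets me extract, up to a subsequence, a strong limit $\varphi$ of $u_c(t_n)$ with $K(\varphi)=0$ and $E(\varphi)=E^*<m$. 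If $\varphi\neq 0$ this contradicts the definition \eqref{minimization} of $m$; if $\varphi=0$ then $\|u_c(t_n)\|_{\dot H^1}\to 0$, so the small-data theory forces $u_c$ to scatter in positive time from $t_n$, contradicting \eqref{critical:infinity Strich}.

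Granted this coercivity, I apply Lemma \ref{L:virial} to $\phi_R$. Because $\phi''(r)=2$ and $\Delta\phi_R=6$ on $|x|\leq R$, the inner contribution is exactly $4K(u_c(t))$; the outer remainders consist of an $R^{-2}M(u_c)$ piece from $\Delta^2\phi_R$ together with terms dominated by $E_R(u_c;t)$ and by $\int_{|x|>R}|\nabla u_c|^2\,dx$. Corollary \ref{compact:almost periodicity}, used with $\varepsilon \ll \delta_0$ and $R=R_0(\varepsilon)$, absorbs all of these into $2\delta_0$, yielding
\begin{align*}
\partial_t^2 V_R(t) \geq 2\delta_0 \qquad \text{for all } t>0.
\end{align*}
Integrating twice on $[0,T]$ gives $V_R(T) \geq \delta_0 T^2 - CRT - CR^2$, which tends to $+\infty$ as $T\to\infty$ and clashes with $V_R(T)\lesssim R^2 M(u_c)$. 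As indicated, the only nontrivial step is the uniform positivity of $K(u_c(t))$; the rest is the standard termwise control of the virial identity by the almost-periodicity of the critical trajectory.
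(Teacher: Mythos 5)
Your proof is correct and follows essentially the same route as the paper: a truncated virial identity, with the interior term giving $4K(u_c(t))$ and the exterior errors absorbed via the almost periodicity of Corollary \ref{compact:almost periodicity}, against the a priori bound $|\partial_t V_R|\lesssim R$ (the paper integrates $\partial_t^2 V_R\gtrsim E(u_0)$ once and uses $|\partial_t V_R|\lesssim R$ to get $(T_1-T_0)E(u_0)\lesssim R$, rather than integrating twice against $V_R\lesssim R^2$; this is the same argument in substance). The only real divergence is the step you flag as the ``main obstacle'': the uniform lower bound on $K(u_c(t))$. The paper gets this purely variationally, with no compactness: by Lemma \ref{uniform bound}, $K(u(t))\geq\min\bigl(6(m-E^*),\,\tfrac23\|\nabla u(t)\|_{L^2}^2+\tfrac12\|u(t)\|_{L^4}^4\bigr)$, and the second alternative cannot collapse because, by the right-hand inequality of Lemma \ref{free-energ-equiva}, it already dominates $E(u(t))=E^*>0$; so $K(u_c(t))\gtrsim\min(m-E^*,E^*)$ for all $t$. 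Your detour through Proposition \ref{compact:APS} (extracting a strong $\dot H^1\cap\dot H^{3/4}$ limit $\varphi$ with $K(\varphi)=0$, $E(\varphi)=E^*<m$, and contradicting \eqref{minimization} or the small-data theory) is also valid --- the limit lies in $H^1$ since the mass is bounded --- but it is heavier machinery than the problem requires. One small gloss to tidy up: the $\Delta^2\phi_R$ error is $O(R^{-2}M(u_c))$, which is not controlled by Corollary \ref{compact:almost periodicity}; you should take $R$ at least $\max\bigl(R_0(\varepsilon), C(M(u_c)/\delta_0)^{1/2}\bigr)$ (harmless, since the exterior-energy bound only improves as $R$ grows), or estimate that term by H\"older as $\bigl(\int_{R\leq|x|\leq 2R}|u_c|^6\,dx\bigr)^{1/3}$ as the paper does, so that it too falls under the exterior-energy smallness.
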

\begin{proof} We still drop the subscript $c$. Now let $\phi$ be a smooth, radial function satisfying $0\leq \phi \leq 1$, $\phi(x)=1$ for $|x|\leq 1$, and $\phi(x)=0$ for $|x|\geq 2$.
For some $R$, we define
\begin{align*}
V_R(t):=\int_{\R^3} \phi_R(x) |u(t,x)|^2\; dx, \quad
\phi_R(x)=R^2\phi\left(\frac{|x|^2}{R^2}\right).
\end{align*}

On one hand, we have
\begin{align*}
\partial_t V_R(t) = 4 \Im \int_{\R^3}
\phi'\left(\frac{|x|^2}{R^2}\right) x \cdot \nabla u (t,x)\;
\overline{u(t,x)} \; dx.
\end{align*}
Therefore, we have
\begin{align}\label{virial:derivative:upper bound}
\big|\partial_t V_R(t)\big| \lesssim R
\end{align}
for all $t\geq 0$ and $R>0$.

On the other hand, by Lemma \ref{L:virial} and  H\"{o}lder's inequality, we have
\begin{align*}
 & \partial^2_t V_R(t)
=4 \int_{\R^3} \phi_R''(r) \big|\nabla u
(t,x)\big|^2\;dx -\int_{\R^3} (\Delta^2 \phi_R)(x) |u(t,x)|^2 \; dx\\
& \qquad \quad - \frac43 \int_{\R^3} (\Delta \phi_R)(x) |u(t,x)|^6\; dx +
\int_{\R^3} (\Delta \phi_R)(x) |u(t,x)|^4\; dx \\
= & 4 \int_{\R^3} \left( 2 |\nabla u(t,x)|^2 -2|u(t,x)|^6 +\frac32
|u(t,x)|^4 \right)\; dx \\
+ & O\left(\int_{|x|\geq R} \left(|\nabla u(t,x)|^2  +|u(t,x)|^6 +
|u(t,x)|^4 \right) \;dx + \left(\int_{R\leq |x|\leq 2R}
|u(t,x)|^6 \;dx \right)^{1/3}\right)\\
= & 4 K\left(u(t)\right)
+  O\left(\int_{|x|\geq R} \left(|\nabla u(t,x)|^2  +
|u(t,x)|^4 \right) \;dx + \left(\int_{R\leq |x|\leq 2R}
|u(t,x)|^6 \;dx \right)^{1/3}\right).
\end{align*}

By Lemma \ref{uniform bound}, we
have
\begin{align*}
 4 K\left(u(t)\right)= &\; 4 \int_{\R^3} \left( 2 |\nabla u(t,x)|^2 -2|u(t,x)|^6 +\frac32
|u(t,x)|^4 \right)\; dx \\
\gtrsim & \min\left(6(m-E(u(t))), \frac{2}{3} \big\|\nabla u(t)
\big\|^2_{L^2} +  \frac12 \big\|u(t)\big\|^4_{L^4} \right) \\
\gtrsim &   \big\|\nabla u(t) \big\|^2_{L^2} +
\big\|u(t)\big\|^4_{L^4} \\
 \gtrsim &
E(u(t)),
\end{align*}
Thus, choosing $\eta>0$ sufficiently small and
$\displaystyle R:=C(\eta)$ and by Corollary
\ref{compact:almost periodicity}, we obtain
\begin{align*}
 \partial^2_t V_R(t)  \gtrsim E(u(t)) = E(u_0),
\end{align*}
which implies that for all $T_1>T_0$
\begin{align*} (T_1-T_0)E(u_0) \lesssim
R=C(\eta).
\end{align*}
Taking $T_1$
sufficiently large, we obtain a contradiction unless $u\equiv0$. But
$u\equiv 0$ is not consistent with the fact that
$\big\|u\big\|_{ST(\R)}=\infty$.
\end{proof}

%
%
%
%

\end{document}